\newcommand{\e}{\varepsilon}
\renewcommand{\L}{\mathcal{L}}
\newcommand{\la}{\lambda}
\newcommand{\al}{\alpha}
\newcommand{\fy}{\varphi}
\newcommand{\p}{\partial}
\newcommand{\I}{\infty}
\newcommand{\ti}{\widetilde}
\newcommand{\R}{\mathbb{R}}
\newcommand{\C}{\mathbb{C}}
\newcommand{\N}{\mathbb{N}}
\newcommand{\Z}{\mathbb{Z}}
\newcommand{\F}{\mathcal{F}}
\renewcommand{\S}{\mathcal{S}}
\renewcommand{\Re}{\mathop{\mathrm{Re}}}
\renewcommand{\Im}{\mathop{\mathrm{Im}}}
\renewcommand{\bar}{\overline}
\renewcommand{\hat}{\widehat}
\numberwithin{equation}{section}
\newtheorem{theorem}{Theorem}[section]
\newtheorem{corollary}[theorem]{Corollary}
\newtheorem{conjecture}[theorem]{Conjecture}
\newtheorem{lemma}[theorem]{Lemma}
\theoremstyle{remark}
\newtheorem{remark}[theorem]{Remark}
\newcommand{\ran}{\rangle}
\newcommand{\lan}{\langle}
\newcommand{\weak}[1]{{\text{w-}#1}}
\newcommand{\lec}{{\ \lesssim \ }}
\newcommand{\gec}{{\ \gtrsim \ }}
\newcommand{\sgec}{\gtrsim}
\newcommand{\EQ}[1]{\begin{equation} \begin{split} #1
 \end{split} \end{equation}}
\newcommand{\Del}[1]{}
\newcommand{\pt}{&}
\newcommand{\pr}{\\ &}
\newcommand{\pq}{\quad}
\newcommand{\pn}{}
\newcommand{\pmt}{\\ &\qquad\times}
\newcommand{\prq}{\\ &\quad}
\newcommand{\LR}[1]{{\lan #1 \ran}}
\newcommand{\de}{\delta}
\newcommand{\be}{\beta}
\newcommand{\ka}{\kappa}
\newcommand{\ga}{\gamma}
\newcommand{\x}{\xi}
\newcommand{\y}{\eta}
\newcommand{\z}{\zeta}
\newcommand{\s}{\sigma}
\newcommand{\na}{\nabla}
\renewcommand{\th}{\theta}
\newcommand{\supp}{\operatorname{supp}}
\newcommand{\E}{\mathcal{E}}
\newcommand{\De}{\Delta}
\newcommand{\Om}{\Omega}
\newcommand{\Br}[1]{\LR{#1}}
\newcommand{\PX}[2]{\big\langle #1 \big| #2 \big\rangle_x}
\newcommand{\IN}[1]{\text{ in }#1}
\newcommand{\zn}{Z}
\newcommand{\zo}{z}
\newcommand{\J}{\check}
\newcommand{\Jp}[1]{{\J{#1}}^\pm}
\newcommand{\BB}{\mathcal{B}}
\newcommand{\NN}{\mathcal{N}_\zn}
\newcommand{\NO}{\mathcal{N}_\zo}
\newcommand{\Nv}{\mathcal{N}_v}
\newcommand{\Nu}{\mathcal{N}_u}
\newcommand{\naxy}{\na_\x^{(\y)}}
\newcommand{\naxz}{\na_\x^{(\z)}}
\begin{document}
\title[Scattering for Gross-Pitaevskii in 3D]{Scattering theory for \\ the Gross-Pitaevskii equation \\ in three dimensions}
\author{Stephen Gustafson,\quad Kenji Nakanishi,\quad Tai-Peng Tsai}
\begin{abstract}
We study global behavior of small solutions of the Gross-Pitaevskii equation in three dimensions. 
We prove that disturbances from the constant equilibrium with small, localized energy, disperse for large time, according to the linearized equation. 
Translated to the defocusing nonlinear Schr\"odinger equation, this implies asymptotic stability of all plane wave solutions for such disturbances. 
We also prove that every linearized solution with finite energy has a nonlinear solution which is asymptotic to it. 
The key ingredients are: (1) some quadratic transforms of the
solutions, which effectively linearize the nonlinear energy space, (2)
a bilinear Fourier multiplier estimate, which allows irregular
denominators due to a degenerate non-resonance property of the quadratic interactions, and (3) geometric investigation of the degeneracy in the Fourier space to minimize its influence. 
\end{abstract}
\maketitle
\tableofcontents

\section{Introduction}
We continue the study \cite{vac,vac2} of the global dispersive nature of solutions for the Gross-Pitaevskii equation (GP)
\EQ{ \label{GP}
 \pt i\psi_t + \De \psi = (|\psi|^2-1)\psi,\quad \psi:\R^{1+3}\to\C}
with the boundary condition $\lim_{|x|\to\I}\psi=1$. The equation itself is equivalent to the defocusing nonlinear Schr\"odinger equation (NLS) by putting $\fy=e^{-it}\psi$ 
\EQ{ \label{NLS}
 \pt i\fy_t + \De \fy = |\fy|^2\fy,}
but the nonzero boundary condition has nontrivial and often remarkable
effects on the space-time global behavior of the solutions both in dispersive and non-dispersive regimes, on which there has been extensive studies \cite{BGS,BOS,BS1,BSm,Ch,CJ1,CJ2,Ga,Ge,Gr2,Gr3,vac,vac2,J2,LaSch,LX,OS,Sp}. 
This boundary condition, or more generally $\lim_{|x|\to\I}|\psi|=1$, is natural in  some physical contexts such as superfluids and nonlinear optics, or generally in the hydrodynamic interpretation of NLS, where $|\psi|^2$ is the fluid density and the zeros correspond to vortices. 
(For more physical backgrounds, see \cite{BS2,FS,JR,JPR,SS} and the references therein.)

Hence it is not surprising that our GP equation, after some transformations \eqref{eq uzo}, is very similar to a version of the Boussinesq equation
\EQ{ \label{Bsq}
 u_{tt} - 2\De u + \De^2 u = -\De(u^2),}
which is a model for water waves and other fluid dynamics. More precisely, this equation can be regarded as a simplified version of our transformed equations (See Remark \ref{transBsq} for the details). 
Our analysis will reveal that the linear part, which is exactly the same for \eqref{Bsq} and our GP, has better dispersive properties than both the low frequency limit (the wave equation) and the hight frequency limit (the Schr\"odinger equation) for this type of bilinear interaction. 

We recall some known facts about GP \eqref{GP}. For any solution $\psi=1+u$, we have conservation of renormalized energy:
\EQ{
 E_1(\psi) \pt:= \int_{\R^3} |\na\psi|^2 + \frac{(|\psi|^2-1)^2}{2} dx 
 = \int_{\R^3} |\na u|^2 + \frac{(2\Re u+|u|^2)^2}{2} dx,}
provided it is initially finite, and then the solution $\psi$ is unique and global \cite{Ge}. As a remarkable feature of GP, there is a family of traveling wave solutions \cite{BS1,Ch} of the form $\psi(t,x)=v_c(x-ct)$ with finite energy for $0<|c|<\sqrt{2}$, where $\sqrt{2}$ is the sound speed. 
They are quite different from the solitary waves of the focusing NLS in their slow (algebraic) decay at the spatial infinity \cite{Gr3} and bounded (subsonic) range of speeds \cite{Gr2}. 

Recently it is proved \cite{BGS} that there is a lower bound on the energy of all possible traveling waves for \eqref{GP} in three dimensions:
\EQ{ \label{def E0}
 \E_0 := \inf\{E_1(\psi) \mid \text{$\psi(t,x)=v(x-ct)$ solves \eqref{GP} for some $c$}\}>0.}
If one believes that there is no more stable structure supported by \eqref{GP}, 
it is natural to expect that the regime below the threshold $\E_0$ is dominated by dispersion, as was conjectured in \cite{BS2}. 
Understanding the effect of dispersion in the nonlinearity is an essential step towards investigating asymptotic stability of traveling waves. 
However the problem is not quite easy, because we have quadratic interactions of the perturbation $u=\psi-1$, without any decaying factor, due to the nonzero constant background:  
\EQ{ \label{eq u0}
 iu_t + \De u - 2\Re u &= u^2+2|u|^2+|u|^2u.}
They are much stronger, for dispersive waves, than cubic interactions and quadratic ones with decaying factors, which typically arise in the stability analysis of solitary waves for NLS. 
In fact, if one considers the NLS with general quadratic terms as a model equation,  
\EQ{
 iu_t + \De u = \la_+ u^2 + \la_0 |u|^2 + \la_- \bar{u}^2,}
then the asymptotic behavior for $t\to\I$ of solutions from small localized initial data is known \cite{HN,HMN} only if $\la_0=0$ in three dimensions. 
From the technical view point, the quadratic power in three dimensions corresponds to the so-called Strauss exponent, because the $L^p$ decay estimate
\EQ{
 \|e^{it\De}\fy\|_{L^3(\R^3)} \lec |t|^{-1/2}\|\fy\|_{L^{3/2}(\R^3)},}
implies that $L^3$ is mapped back to the dual $L^{3/2}$ by the quadratic nonlinearity, with the critical decay order $|t|^{-1}$ for integrability.  
Thus one cannot get closed nonlinear estimates just from the $L^p$ decay, unless one starts with given final states (\cite{vac2} gives such a result for GP). 
Hence we have to take account of the oscillatory property of the quadratic terms. 
Then the term $|u|^2$ is worse than the others, because its phase by the linear approximation is stationary both in space and time at zero frequency $\x=0$. 

Actually, we cannot neglect the first order term $2\Re u$ for large
time behavior, so we should not expect too strong an analogy with the quadratic NLS. By the diagonalizing transform 
\EQ{
 u=u_1+iu_2 \mapsto v=u_1+iUu_2, \pq U=\sqrt{-\De(2-\De)^{-1}},}
we get the equation for $v$ with a self-adjoint operator $H=\sqrt{-\De(2-\De)}$ : 
\EQ{
 iv_t - H v = U(3u_1^2+u_2^2+|u|^2u_1) + i(2u_1u_2+|u|^2u_2).}
The equation for $U^{-1}v$ is\footnote{Here we naturally put $U^{-1}$ on $v$ to map the linear energy space $H^1(\R^3)$ for $v$ onto that for the wave equation, $\dot H^1(\R^3)$.} similar to the quadratic wave equation around $\x\to 0$
\EQ{ \label{NLW}
 u_{tt} - \De u = |\na u|^2,}
if we pick up the first nonlinearity $U(u_1^2)$ only. This equation is known \cite{John} to blow up from arbitrarily small initial data in $C_0^\I$. 
Note that the other terms $U(u_2^2)+2i(u_1u_2)$ are even worse, enhancing the low frequency through $u_2=U^{-1}\Im v$. 
The wave equation behaves worse in the quadratic interactions than the
Schr\"odinger (and the Klein-Gordon) equation because, beside the
slower $L^p$ decay of $e^{it\sqrt{-\De}}$, the waves propagating in
parallel directions are strongly resonant in the bilinear forms, unless they have the null structure (special coefficients killing exactly the parallel interactions). 

Despite of all these observations, we can prove that the solutions for GP with small localized initial energy disperse linearly at time infinity. 
Below we denote $\LR{x}=\sqrt{2+|x|^2}$ and $\LR{\na}=\sqrt{2-\De}$.  
$H^1$ is the standard Sobolev space with the norm $\|v\|_{H^1}=\|\Br{\na}v\|_{L^2}$, and $\LR{x}^{-1}H^1$ is the weighted space with the norm $\|\LR{x}v\|_{H^1}$. 
\begin{theorem} \label{thm:init}
There exists $\de>0$ such that: For any $u(0)\in H^1(\R^3)$ satisfying 
\EQ{ \label{init u}
 \int_{\R^3} \LR{x}^2(|\Re u(0,x)|^2 + |\na u(0,x)|^2) dx < \de^2,}
we have a unique global solution $\psi=1+u$ of \eqref{GP} such that 
$v := \Re u + i U\Im u$ 
satisfies $e^{itH}v\in C(\R;\LR{x}^{-1}H^1(\R^3))$ and for some $v_+\in\LR{x}^{-1}H^1(\R^3)$,   
\EQ{ \label{scatt}
 \pt\left\|v(t) - e^{-itH}v_+\right\|_{H^1} \le O(t^{-1/2}),
 \pq\left\|\LR{x}\left\{e^{itH}v(t) - v_+\right\}\right\|_{H^1} \to 0,}
as $t\to\I$. Moreover, we have $E_1(\psi)=\|\Br{\na}v_+\|_{L^2}^2$, and the correspondence $v(0)\mapsto v_+$ defines a bi-Lipschitz map between $0$-neighborhoods of $\LR{x}^{-1}H^1(\R^3)$. 
\end{theorem}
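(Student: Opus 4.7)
The plan is to follow the roadmap suggested by the abstract: diagonalize, quadratic-normal-form, and close a bootstrap in a weighted space that tracks the decay rate $t^{-1/2}$.

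\textbf{Step 1: Diagonalization and normal form.} First I would rewrite \eqref{eq u0} in terms of $v=\Re u+iU\Im u$, yielding $iv_t-Hv=\mathcal{Q}(v,\bar v)+\mathcal{C}(v)$ with $\mathcal{Q}$ quadratic and $\mathcal{C}$ cubic (as already sketched in the excerpt). The quadratic interactions are the dangerous part because in three dimensions they sit exactly at the Strauss integrability threshold and the $U(u_1^2)$ piece resembles the wave equation \eqref{NLW}. To remove them I would define a quadratic transform $z=v+B(v,v)+B'(v,\bar v)+B''(\bar v,\bar v)$, where the bilinear Fourier multipliers $B,B',B''$ are chosen so that the symbols divide the quadratic nonlinearity by the resonance function $H(\xi)\pm H(\eta)\pm H(\xi-\eta)$. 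This is the quadratic transform referenced in the abstract and in \eqref{eq uzo}; it converts the quadratic-in-$v$ equation into an equation for $z$ whose nonlinearity is effectively cubic, at the price of the multipliers $B,\dots$ having irregular denominators where the resonance function vanishes.

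\textbf{Step 2: Geometry of the resonance set.} The main obstacle will be that the resonance function $H(\xi)=H(\eta)+H(\xi-\eta)$ vanishes on a nontrivial set, in particular near $\eta\parallel\xi$ at zero frequency: this is the ``degenerate non-resonance'' alluded to in the abstract, the analog for $H$ of the parallel-interaction pathology for the wave equation. I would therefore split the bilinear symbols dyadically, separating (i) the genuinely non-resonant region, where the normal form is harmless, (ii) a thin conic neighbourhood of the degenerate set, where I keep the original quadratic term and exploit oscillation in an appropriate bilinear Fourier multiplier estimate, and (iii) the high-frequency Schr\"odinger-like region, where normal forms are safe. The careful geometric book-keeping — reducing the damage of the degeneracy — is the step I expect to consume most of the technical work in the paper, and it is the step where cancellations resembling a null structure must be extracted from $\mathcal{Q}$ in order to beat the parallel waves.

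\textbf{Step 3: Function-space bootstrap.} I would close the argument in a norm equivalent to $\|\langle x\rangle \, e^{itH}v(t)\|_{H^1}$ (plus a standard Strichartz/$L^p$ component controlling $t^{-1/2}$ dispersive decay of $v$ in $L^3$). The weighted control at time $0$ comes from \eqref{init u}, together with the fact that the transformation $u\mapsto v$ preserves this weighted regularity up to mild commutator terms involving $U$. The normal-form identity then converts the $H^1$ norm of $z(t)-z_+$, where $z_+$ is the asymptotic profile, into bilinear space-time integrals on which the estimates of Step 2 give a bound $O(t^{-1/2})$, giving the first half of \eqref{scatt}. The weighted statement $\|\langle x\rangle\{e^{itH}v(t)-v_+\}\|_{H^1}\to 0$ follows by running the same analysis with $\langle x\rangle$ commuted through: each $\langle x\rangle$ either falls harmlessly on $v$, costing a power of $t$ already paid for by the extra bilinear gain, or hits $e^{-itH}$ and gets rewritten via $[\langle x\rangle,e^{-itH}]\sim t\nabla_\xi H\, e^{-itH}$, a standard vector-field substitute.

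\textbf{Step 4: Global existence, energy identity, bi-Lipschitz.} Global existence for small data is classical (\cite{Ge}), so the novelty is the a priori bound: I would set up a standard continuity argument on the bootstrap norm, using the smallness of $\delta$ to absorb all multilinear terms. The identity $E_1(\psi)=\|\langle\nabla\rangle v_+\|_{L^2}^2$ is obtained by noting that $E_1(\psi)=\|\langle\nabla\rangle v\|_{L^2}^2+\mathcal{O}(\|u\|_{H^1}^3)$ pointwise in time, combining conservation of $E_1$ with the $L^3$-decay of $v$ coming from the first inequality in \eqref{scatt} to send the cubic remainder to zero as $t\to\infty$. Finally, bi-Lipschitzness of $v(0)\mapsto v_+$ is a direct consequence of running the same contraction estimate on the difference of two solutions; injectivity on a neighbourhood of zero is automatic and the inverse map (construction of wave operators for prescribed $v_+$) is obtained by the same fixed point argument run backwards in time from infinity.
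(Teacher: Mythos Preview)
Your roadmap has the right overall shape (diagonalize, normal form, weighted bootstrap), but Step~1 misidentifies the normal form, and this causes the mechanism in Steps~2--3 to be off.

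You describe a Shatah-type transform whose symbols ``divide the quadratic nonlinearity by the resonance function $H(\xi)\pm H(\eta)\pm H(\xi-\eta)$'' to make the equation effectively cubic. That is \emph{not} what the paper does, and it is not what \eqref{eq uzo} encodes. The paper's normal form $\zn=v+b(u)$ (see \eqref{w to z}, \eqref{def B12}) is completely regular --- the symbol is just $\pm\LR{(\x_1,\x_2)}^{-2}$, no division by $\Om$ --- and its purpose is only to eliminate the specific quadratic terms carrying extra factors of $U^{-1}$ (those coming from $u_2=U^{-1}\Im v$), while avoiding a high-high derivative loss. The transformed equation \eqref{eq uzn} still contains genuine quadratic terms $B_3[v_1,v_1]+B_4[v_2,v_2]$ with $|B_j|\lec U(\x)$. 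A Shatah normal form dividing by $\Om$ would be singular on a codimension-one set (for the $+-$ combination) and you give no argument that the resulting operator is even bounded on the relevant spaces; the paper explicitly avoids this route.

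The actual engine, which your outline does not capture, is a \emph{space-time resonance} integration by parts applied to the Duhamel integral when estimating $\|J\zn\|_{H^1}$. Writing $J=e^{-itH}xe^{itH}$ and passing to Fourier, the derivative $\naxy$ hits the phase $e^{is\Om}$ and produces the bad factor $is\,\naxy\Om$. One then decomposes each dyadic piece $B_j^{a,b,c}=B_j^{a,b,c,X}+B_j^{a,b,c,T}$ (Section~\ref{s:multest}) and integrates by parts: in $\y$ on $B_j^X$ where $|\na_\y\Om|\neq 0$, and in $s$ on $B_j^T$ where $|\Om|\neq 0$. The resulting multipliers $\BB_1,\BB_2,\BB_3$ are genuinely singular near the degenerate set $\{\x=0\}$, and closing the estimate requires the new bilinear multiplier bound of Lemma~\ref{sbil}, which trades H\"older loss for reduced symbol regularity (only $[H^{1+\e}]$ or $[H^{3/2}]$ rather than Coifman--Meyer). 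Your Step~2 gestures at a splitting, but places it inside the normal-form construction rather than in the weighted Duhamel estimate, and does not identify this lemma; your Step~3 treats the commutator $[\LR{x},e^{-itH}]\sim t\na_\x H$ as routine, when in fact controlling the resulting $t\naxy\Om$ on the remaining \emph{quadratic} terms is the heart of Sections~10--11 and consumes most of the paper. Without this mechanism and Lemma~\ref{sbil}, the bootstrap in Step~3 does not close.
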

Actually we have some decay $t^{-\e}$ also for the weighted norm in \eqref{scatt}, but we will not try to specify it, since it is quite small. For pointwise decay, we can derive from the weighted energy estimate together with the $L^p$ estimate on $e^{-itH}$ that
\EQ{ \label{pt dec u}
 \|u_1(t)\|_{L^\I(\R^3)} \le O(t^{-1}), 
 \pq \|u_2(t)\|_{L^\I(\R^3)} \le O(t^{-9/10}).}

In view of the scaling property, the optimal (weakest) weight should be $\LR{x}^{-1/2}$ instead of $\LR{x}^{-1}$. 
The latter is however more convenient to estimate in the Fourier space, where it turns into one derivative. 
We can see that all the traveling waves with finite energy have \eqref{init u} finite, from their asymptotic behavior \cite{Gr3} as $|x|\to\I$. 
For that purpose, we should not use a weight stronger than $\LR{x}^{-3/2}$, hence  $\LR{x}^{-1}$ is the unique integer choice. 

By using fractional derivatives in the Fourier space, we could reduce
the weight slightly, but probably not to the optimal rate $1/2$, since
our argument needs to sacrifice part of the decay in some bilinear estimates with Fourier singularities. 

Those singularities arise because the linearized operator $e^{-itH}$ behaves like the wave equation around $\x\to 0$, where the parallel interactions become stronger. 
Since we know that solutions of nonlinear wave equations such as \eqref{NLW} blow up, we must use the difference between $H$ and $\sqrt{-2\De}$ in estimating the bilinear terms, which gives us some degenerate non-resonance property. 
The singular multipliers appear when we integrate on the phase in the Fourier regions for such interactions. 
In order to treat them, we will derive an estimate for bilinear Fourier multipliers (Lemma \ref{sbil}), which allows much less regular multipliers than the standard Coifman-Meyer type estimates. 
It will also be crucial to investigate carefully the geometric structure of the Fourier regions for those singularities, in order to exploit their smallness in volume.

On the other hand, we also have to take care of those bad quadratic terms which contain $u_2=U^{-1}\Im v$. The remedy was already given in \cite{vac,vac2}, by the quadratic transform
\EQ{ \label{def M}
 u \mapsto M(u) := v + \Br{\na}^{-2}|u|^2,}
which effectively removes those terms from the equation.  
We will introduce a new quadratic transform, which behaves better in
terms of regularity for high-high interactions. 
It turns out that the wave-like behavior of $e^{-itH}$ becomes an advantage when estimating the quadratic parts of those transforms, because it gives more decay for $\x\to 0$ than $e^{it\De}$. Indeed that is crucial to get the scattering in the same topology $\LR{x}^{-1}H^1$ as the initial data space. 

Extending the above result to the energy space is a supercritical problem from the scaling view point, and so it seems beyond our current technology (it is critical in four dimensions, where we have the small energy scattering \cite{vac}). However, we can still solve the final state problem without uniqueness. Since the energy space is essentially nonlinear for $u$, we need the  explicit linearizing map $M(u)$ to state our scattering result. 
\begin{theorem} \label{thm:fin}
For any $z_+\in H^1(\R^3)$, there exists a global solution $\psi=1+u$ of \eqref{GP} satisfying $E_1(\psi)=\|\Br{\na}z_+\|_{L^2(\R^3)}^2$ and as $t\to\I$ 
\EQ{ \label{scat z}
 \pt \|M(u) - e^{-itH}z_+\|_{H^1(\R^3)} \to 0.}
\end{theorem}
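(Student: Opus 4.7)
\emph{Approach.} The plan is the standard wave-operator construction for \eqref{eq u0}, solving the evolution backward in time from $t=+\I$. Fix $z_+\in H^1$ and set $z(t):=e^{-itH}z_+$. We seek $\psi=1+u$ of \eqref{GP} with $M(u)-z\to 0$ in $H^1$, on a half-line $[T,\I)$ for $T$ sufficiently large, and then extend to all of $\R$ by the G\'erard global well-posedness in the finite-energy class \cite{Ge}. Writing $M(u)=z+w$ and using that $M$ was introduced in \cite{vac,vac2} precisely so that $(i\p_t-H)M(u)=\F(u)$ has the stationary-phase quadratic $|u|^2$ absorbed into the transform, the Duhamel formula at $+\I$ reads
\EQ{
 w(t) = -i\int_t^\I e^{-i(t-s)H}\F(u(s))\,ds.
}
Given $w$, the function $u$ is recovered from the algebraic relation $v=M(u)-\Br{\na}^{-2}|u|^2$ together with $u_1=\Re v$ and $u_2=U^{-1}\Im v$; this inversion is itself a contraction in $H^1\cap L^4$ for $u$ small.

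\emph{Iteration space and smallness.} The fixed point will run on a space $X_T$ over $[T,\I)\times\R^3$ controlling $u$ in $C_tH^1$ together with the Strichartz-type $L^q_tL^p_x$ norms of $u$ and of $M(u)-z$ needed to estimate $\F(u)$. The key observation is that although $z_+$ is arbitrary in $H^1$, Strichartz and dispersive estimates for $e^{-itH}$---whose $L^p$ decay was noted to lie between those of the wave and Schr\"odinger semigroups---force $\|z\|_{L^q_tL^p_x([T,\I)\times\R^3)}\to 0$ as $T\to\I$ in each scale-invariant pair. Since $\F(u)$ vanishes to second order in $u$, choosing $T$ large makes the Duhamel map a contraction on a ball of radius $\e_0$ in $X_T$, producing $u\in X_T$ on $[T,\I)$ with $M(u)-z\to 0$ in $H^1$.

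\emph{Main obstacle.} The heart of the argument is the nonlinear estimate on $\F(u)$. The same degenerate Fourier resonances that appear in the proof of Theorem~\ref{thm:init} reappear here, but now \emph{without} the aid of the weighted norm $\Br{x}^{-1}H^1$---only Strichartz-type control of $u$ is available. The singular bilinear Fourier multipliers produced by integrating on the phase will have to be treated via Lemma~\ref{sbil} together with a careful geometric partition of frequency space separating the parallel/low-frequency resonant set, so that its smallness in volume can be exploited. The new quadratic transform, which improves the high-high behavior, is what allows every estimate to close at the pure $H^1$ level.

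\emph{Global extension and energy identity.} The solution $u$ so constructed lies in $C([T,\I);H^1)$, hence $\psi(T)$ is a finite-energy datum for \eqref{GP} and extends uniquely to all of $\R$ by \cite{Ge}, with $E_1(\psi)$ conserved. The expansion
\EQ{
 E_1(\psi) = \|\Br{\na}v\|_{L^2}^2 + 2\int_{\R^3}u_1|u|^2\,dx + \tfrac12\||u|^2\|_{L^2}^2,
}
combined with the vanishing of the $L^4$-type norms of $u$ delivered by the Strichartz bound in $X_T$, gives $E_1(\psi)=\lim_{t\to\I}\|\Br{\na}v(t)\|_{L^2}^2$. The same $L^p$ decay of $u$ forces $\Br{\na}^{-2}|u|^2\to 0$ in $H^1$, whence $v-z\to 0$ in $H^1$ and $E_1(\psi)=\|\Br{\na}z_+\|_{L^2}^2$, while the scattering \eqref{scat z} is immediate by construction.
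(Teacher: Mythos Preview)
Your proposal has a genuine gap: the contraction scheme cannot close at the pure $H^1$ level. The final-state problem here is \emph{energy-supercritical} from the scaling viewpoint (the paper says so explicitly), and a fixed-point argument would yield uniqueness of the scattering solution---something the paper deliberately does not claim (``we can still solve the final state problem \emph{without uniqueness}''). Concretely, smallness of the Strichartz norms of $z=e^{-itH}z_+$ on $[T,\I)$ is not enough: the Duhamel nonlinearity $\NO(u)$ contains the cubic piece $\NO^2=-U(u_2^2u_1)-i\Br{\na}^{-2}\na\!\cdot\!(u_2^2\na u_2)$, which in the natural dual Strichartz norm decays only like $t^{-1}$ and is therefore not integrable. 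There is no way to bound this term by products of decaying Strichartz norms alone; the paper instead exploits a structural cancellation---rewriting $\NO^2=-U(u_2^2\bar v)-i\Br{\na}^{-2}\{H(\x)U(\x_2)-\x\x_2\}[u_2^2,u_2]$ so that the second multiplier carries a factor $\x_1$, and then integrating the first piece by parts in $t$ against the phase $H(\x)+H(\x_2)$. This produces boundary terms and commutators that do not sit inside a contraction map. You have also mis-identified the tools: Lemma~\ref{sbil} and the singular-multiplier geometry of Sections~10--11 are for the \emph{weighted} problem (Theorem~\ref{thm:init}), not this one; and the transform used here is the \emph{old} one $M(u)=\zo$, not the new $\zn$---precisely because $\zo$ carries the subtle non-resonance in its cubic part that makes the $\NO^2$ cancellation work. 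Finally, $u$ does not live in $C_tH^1$: since $u_2=U^{-1}v_2$, only $u\in C_tF_1$ is available (cf.\ Remark~\ref{L2 unbd}).

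The paper's actual route is compactness plus energy conservation, following \cite{asshort}. One takes approximating solutions $\psi^T$ with $M(u^T)(T)=e^{-iTH}z_+$, obtains uniform energy bounds via Theorem~\ref{thm:map}, and proves equicontinuity of $e^{itH}\zo^T$ in $C([-\I,\I];\S')$---this is where the $\NO^2$ cancellation is used, but only to get \emph{weak} convergence, which is much softer than a norm estimate. A subsequence converges weakly to $(\zo^\I,u^\I)$ with ${\K\zo}^\I(\I)=z_+$; lower semicontinuity gives $E_1(\psi^\I)\le\|\Br{\na}z_+\|_{L^2}^2$, while conservation and weak convergence at $t=\I$ give the reverse inequality, forcing equality and upgrading the convergence to strong in $H^1$.
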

The proof depends on a compactness argument, so we get almost no information about the initial data set for the scattering.   
However, the above initial data result Theorem \ref{thm:init} implies
that it contains at least those data satisfying \eqref{init u}.

In \eqref{scat z}, we can replace $M(u)$ with $v$ if the norm is also replaced with $\dot H^1$, but the quadratic part is generally not even bounded in $L^2$ (see Remark \ref{L2 unbd} for the details). 
Hence the nonlinear effect is not really vanishing for large time in the energy sense, but instead the map $M$ essentially linearizes the (renormalized) energy space
\EQ{
 \pt F_1 := \{f \in \dot H^1(\R^3) \mid 2\Re f +|f|^2 \in L^2(\R^3)\},}
where $\dot H^1(\R^3)=\{f\in L^6(\R^3)\mid \na f\in L^2(\R^3)\}$ is the homogeneous Sobolev space. 
$F_1$ was introduced in \cite{Ge} with the distance $\de(f,g)$ defined by\footnote{The distance in \cite{Ge} is slightly different but equivalent to the above.} 
\EQ{
 \de(f,g)^2 = \|\na(f-g)\|_{L^2}^2 + \frac{1}{2}\||f|^2+2f_1-|g|^2-2g_1\|_{L^2}^2.}
More precisely, the energy space in three dimensions was characterized as 
\EQ{
 \{e^{i\th}(1+f)\mid \th\in\R,\ f \in F_1\}.}
As for the mapping property of $M$ on the energy space, we have the following. 
Let for any $L>0$ and $\ka>0$, 
\EQ{
 \pt F_1(L,\ka) := \{f\in F_1\mid E_1(1+f)\le L^2,\ \|f\|_{L^6}\le \ka\},
 \pr H^1(L,\ka) := \{f\in H^1\mid \|f\|_{H^1}\le L,\ \|f\|_{L^6}\le \ka\}.}
\begin{theorem} \label{thm:map}
(1) For any $f,g\in F_1$, we have 
\EQ{ \label{energy mapping}
 \de(f,g)^2 =  \|\Br{\na}(M(f)-M(g))\|_{L^2}^2 + \frac{1}{2}\|U(|f|^2-|g|^2)\|_{L^2}^2,}
hence the map $u\mapsto M(u)$ is Lipschitz continuous from $F_1$ to $H^1$. \par
(2) For any $L>0$, there exist $\ka>0$, $\ka'>2\ka$, and a unique map $R:H^1(L,\ka)\to F_1(2L,\ka')$ satisfying $M\circ R=id$. Moreover we have 
\EQ{
 \de(R(f),R(g)) \le 2\|\Br{\na}(f-g)\|_{L^2},}
namely the inverse is also Lipschitz in this domain. 
\end{theorem}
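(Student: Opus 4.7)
My plan is to prove (1) by a direct Fourier-side computation which yields the Lipschitz bound on $M$ as a free corollary, and then to invert $M$ in (2) by decoupling and a contraction argument, with the reverse Lipschitz estimate returning via (1).

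For part (1), the key spectral identities $\langle\nabla\rangle U = \sqrt{-\Delta}$ and $U^2 = 1 - 2\langle\nabla\rangle^{-2}$ are immediate from the definitions of $U$ and $\langle\nabla\rangle$. Writing
\[
M(f) - M(g) = (f_1-g_1) + iU(f_2-g_2) + \langle\nabla\rangle^{-2}(|f|^2 - |g|^2),
\]
with the last summand real, I would expand $\|\langle\nabla\rangle(M(f)-M(g))\|_{L^2}^2$: the imaginary part contributes $\|\sqrt{-\Delta}(f_2-g_2)\|^2 = \|\nabla(f_2-g_2)\|^2$, and the real part contributes $2\|f_1-g_1\|_{L^2}^2 + \|\nabla(f_1-g_1)\|_{L^2}^2 + 2\langle f_1-g_1, |f|^2-|g|^2\rangle + \|\langle\nabla\rangle^{-1}(|f|^2-|g|^2)\|_{L^2}^2$, using self-adjointness of $\langle\nabla\rangle$ in the cross term. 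Using the second spectral identity in the form $\tfrac12\|Uh\|_{L^2}^2 = \tfrac12\|h\|_{L^2}^2 - \|\langle\nabla\rangle^{-1}h\|_{L^2}^2$ with $h = |f|^2 - |g|^2$, the $\langle\nabla\rangle^{-1}$ cross-terms cancel upon summing, and the remaining pieces combine into $\tfrac12\|2(f_1-g_1)+(|f|^2-|g|^2)\|_{L^2}^2$, recovering exactly the second half of $\delta(f,g)^2$. Dropping the nonnegative $\tfrac12\|U(|f|^2-|g|^2)\|_{L^2}^2$ gives Lipschitz continuity $\|\langle\nabla\rangle(M(f)-M(g))\|_{L^2}\le\delta(f,g)$, and the specialization $g=0$ (with $M(0)=0$) shows that $M$ in fact maps $F_1$ into $H^1$.

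For part (2), since $\langle\nabla\rangle^{-2}|u|^2$ is real, the equation $M(u) = f$ decouples as
\[
Uu_2 = f_2, \qquad u_1 + \langle\nabla\rangle^{-2}(u_1^2 + u_2^2) = f_1.
\]
The first is inverted directly: $u_2 := U^{-1}f_2 = |\nabla|^{-1}\langle\nabla\rangle f_2$ lies in $\dot H^1$, with $\|\nabla u_2\|_{L^2} = \|f_2\|_{H^1}$ via boundedness of the Riesz multiplier and $\|u_2\|_{L^6}\lesssim \|f_2\|_{H^1}$ via Hardy--Littlewood--Sobolev. For the second, I would run a Picard iteration for $u_1$ (equivalently $w := u_1 - f_1$) in $\dot H^1 \cap L^6$, using the smoothing $\langle\nabla\rangle^{-2}: L^3 \to L^6$ (Young's inequality with the Bessel kernel) together with the factorization $u_1^2 - \tilde u_1^2 = (u_1-\tilde u_1)(u_1+\tilde u_1)$. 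The smallness $\|f\|_{L^6}\le\kappa$ is exploited to keep the contraction ratio below $1$ on an appropriate ball, yielding a unique solution with the required $L^6$ and $\dot H^1$ bounds; the renormalized energy bound $E_1(1+u) \le (2L)^2$ then comes from the $g=0$ case of the identity in (1), provided one verifies $\|U|u|^2\|_{L^2} = O(L)$.

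The reverse Lipschitz estimate follows by substituting $R(f), R(g)$ into the identity of (1):
\[
\delta(R(f),R(g))^2 = \|\langle\nabla\rangle(f-g)\|_{L^2}^2 + \tfrac12\|U(|R(f)|^2-|R(g)|^2)\|_{L^2}^2,
\]
and controlling the second term using $|R(f)|^2 - |R(g)|^2 = \Re\bigl((R(f)-R(g))\overline{R(f)+R(g)}\bigr)$ together with H\"older estimates in $L^6$ and the smallness of $\kappa$; this absorbs it into $\|\langle\nabla\rangle(f-g)\|_{L^2}^2$ with a factor of at most $4$, giving $\delta(R(f),R(g)) \le 2\|\langle\nabla\rangle(f-g)\|_{L^2}$. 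The principal technical obstacle throughout part (2) is that $u = R(f)$ generally fails to lie in $L^2$: since $U^{-1}$ is singular at $\xi = 0$, the function $u_2 = U^{-1}f_2$ enhances the low frequencies of $f_2$, so the construction must be carried out entirely in homogeneous-type spaces, and the terms $u_1^2$, $u_2^2$ are individually \emph{not} in $L^2$. Finiteness of $U|u|^2$ in $L^2$ (needed for the energy identity) must be obtained by playing the low-frequency smoothing of $U$ against the explicit low-frequency structure $u_2 = U^{-1}f_2$; this is ultimately what forces the extra smallness hypothesis $\|f\|_{L^6}\le\kappa$ beyond mere $H^1$ boundedness.
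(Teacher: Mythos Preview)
Your treatment of part (1) is correct and is exactly what the paper means by ``straightforward''. Part (2) follows the same outline as the paper---decouple $u_2=U^{-1}f_2$, solve for $u_1$ by a fixed point, and read off the reverse Lipschitz from the identity in (1)---but the estimates you propose do not close.

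The gap is in the quadratic term $\langle\nabla\rangle^{-2}u_2^2$. With $u_2=U^{-1}f_2$ you only have $\|u_2\|_{L^6}\lesssim\|f_2\|_{H^1}\le L$, so your route via $\langle\nabla\rangle^{-2}:L^3\to L^6$ (or the $\dot H^1$ analogue through $u_2\nabla u_2\in L^{3/2}$) yields
\[
\|\langle\nabla\rangle^{-2}u_2^2\|_{L^6}+\|\langle\nabla\rangle^{-2}u_2^2\|_{\dot H^1}\lesssim L^2,
\]
which is bounded but \emph{not small}. Consequently the Picard map does not send a small $L^6$-ball to itself, and without $L^6$-smallness of the iterates you cannot make the contraction ratio (which is $\sim\|u_1+\tilde u_1\|_{L^6}$) less than $1$. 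The same problem recurs in your reverse Lipschitz step: ``H\"older in $L^6$'' for $\|U(|u|^2-|v|^2)\|_{L^2}$ would need $u+v\in L^3$ or $L^4$, which is unavailable.

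What is missing is the refined product inequality the paper proves,
\[
\|\langle\nabla\rangle^{-2}(uv)\|_{\dot H^1}\lesssim\|U(uv)\|_{L^2}\lesssim\|v\|_{\dot H^1}\,\|u\|_{\dot H^1}^{1/2}\,\|Uu\|_{L^6}^{1/2},
\]
obtained by writing $\|U(uv)\|_{L^2}\sim\|\nabla(uv)\|_{H^{-1}}$ and splitting the product. The crucial point is the factor $\|Uu\|_{L^6}^{1/2}$: with $u=u_2=U^{-1}f_2$ it becomes $\|f_2\|_{L^6}^{1/2}\le\kappa^{1/2}$, so that $\|\langle\nabla\rangle^{-2}u_2^2\|_{\dot H^1}\lesssim L^{3/2}\kappa^{1/2}$ is genuinely small. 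This is precisely the ``low-frequency smoothing of $U$ against $u_2=U^{-1}f_2$'' you allude to at the end, but it is needed for \emph{smallness} in the iteration, not merely for finiteness of $U|u|^2$ in $L^2$; once you have this inequality, both the contraction and the reverse Lipschitz bound go through as in the paper.
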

Remark that the smallness is required only in $L^6$, not in the energy, and the $L^6$ norm decays if the solution scatters. Hence we can apply the invertible maps to scattering solutions and their asymptotic profiles for sufficiently large time.  
Thus we can rewrite the scattering statement \eqref{scat z} as 
\EQ{
 \de(u(t),R(e^{-itH}z_+))\to 0 \pq (t\to\I).}

We can also translate the above results to {\it asymptotically plane wave solutions} for the cubic NLS \eqref{NLS}. 
First, it is easy to see that for any $a\ge 0$ and any $d\in\N$, every weak solution $\fy\in L^3_{loc}(\R^{1+d})$ satisfying $|\fy|=a$ on $\R^{1+d}$ is written in the form
\EQ{ \label{plane}
 \fy(t,x) = a e^{-i(a^2+|b|^2)t+ibx+ic},}
for some $b\in\R^d$ and $c\in\R$. 
The above solutions for \eqref{GP} can be regarded as perturbations of the plane wave $\fy=e^{-it}(1+u)$, and perturbations of the other plane waves are generated by the invariance of NLS under the scaling and Galilean transforms. 
More explicitly, for any solution $\psi=1+u$ of \eqref{GP}, for any
$a>0$, $b \in \R^3$ and $c\in\R$, we have a solution $\fy$ for \eqref{NLS} of the form 
\EQ{ \label{asy plane}
 \fy(t,x) = ae^{-i(a^2+|b|^2)t+ibx+ic}(1+u(a^2t,a(x-2bt))).}
In particular, Theorem \ref{thm:init} implies the asymptotic stability of all plane waves \eqref{plane} in the weighted energy norm \eqref{scatt}, as well as in $L^\I$ \eqref{pt dec u}, for perturbations $u(0)$ satisfying \eqref{init u}. 
Similarly, Theorem \ref{thm:fin} implies that there exist plenty of asymptotically plane wave solutions with finite (renormalized) energy, at least one for each asymptotic profile from $H^1$, $a>0$, $b\in\R^3$ and $c\in\R$.  

Notice that the situation is quite different for the standard boundary condition $a=0$.  Indeed the invariant scaling $\fy\mapsto \la\fy(\la^2 t,\la x)$ also changes the energy 
\EQ{
 E_0(\fy) = \int_{\R^3} |\na\fy|^2 + \frac{|\fy|^4}{2} dx,}
homogeneously, so the smallness of the energy $E_0(\fy)$ has no impact on the scattering, and the transform $M$ is reduced to the identity. 
Moreover, it is easy to observe that the free propagator $e^{it\De}$ is continuous on the energy space $\dot H^1\cap L^4$, but not globally bounded.  
Hence some free solutions from this space cannot be asymptotic to any nonlinear solution in the full energy. 
Once we restrict the solutions to $L^2(\R^3)$, then the scattering problem was completely solved by \cite{GV}, extending the earlier result \cite{LS} in a smaller space. But without $L^2$ finiteness, it is not even clear whether we can have a result similar to Theorem \ref{thm:fin} in $\dot H^1$, or we should instead modify the set of asymptotic profiles. 

We conclude the introduction by reinforcing the conjecture in \cite{BS2} into a precise statement of nonlinear scattering. Let $\E_0>0$ be as in \eqref{def E0}. 
\begin{conjecture} \label{conj}
For any global solution $\psi\in C(\R;1+F_1)$ of \eqref{GP} satisfying $E_1(\psi)<\E_0$, there is a unique $z_+\in H^1(\R^3)$ satisfying $E_1(\psi)=\|\Br{\na}z_+\|_{L^2}^2$ and 
\EQ{
 \|M(u(t))-e^{-itH}z_+\|_{H^1(\R^3)} \to 0 \pq (t\to\I).}
Moreover, the map $u(0)\mapsto z_+$ is a homeomorphism between the open balls of radius $\E_0^{1/2}$ around $0$ in $F_1$ and $H^1$. 
\end{conjecture}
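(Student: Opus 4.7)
The plan is to follow the concentration-compactness plus rigidity scheme of Kenig--Merle, transported to the GP setting by the linearizing map $M$ of Theorem \ref{thm:map}. Since $M$ is a bi-Lipschitz homeomorphism between a neighborhood of $0$ in the nonlinear energy space $F_1$ and a neighborhood of $0$ in the Hilbert space $H^1$, on which $e^{-itH}$ is unitary, the natural notion of scattering below threshold is that $z(t):=M(u(t))$ approaches $e^{-itH}z_+$ in $H^1$. I would define
\EQ{
\E_c := \sup\{\E>0 \mid \text{every } \psi=1+u\in C(\R;1+F_1) \text{ with } E_1(\psi)<\E \text{ scatters in this sense}\}.
}
Theorem \ref{thm:init} together with Theorem \ref{thm:map} gives $\E_c>0$; the goal is $\E_c\ge\E_0$.

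Suppose for contradiction $\E_c<\E_0$, and pick a sequence of solutions $\psi_n=1+u_n$ with $E_1(\psi_n)\to\E_c$ whose images $z_n:=M(u_n)$ do not scatter. The first major step is a \emph{linear profile decomposition} for $e^{-itH}$ in $H^1$. Because $H=\sqrt{-\De(2-\De)}$ has no dilation symmetry — it behaves like $\sqrt{-2\De}$ near $\x=0$ and like $-\De$ near $\x=\I$ — the only concentrating parameters are space-time translations, which should streamline the decomposition compared with the pure NLS analogue. The second step upgrades this to a nonlinear profile decomposition by means of a perturbation lemma built on the bilinear Fourier multiplier estimates developed for Theorem \ref{thm:init}; since each non-trivial profile of strictly sub-critical energy would scatter by definition of $\E_c$, only one profile can survive. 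This extracts a minimal non-scattering critical element $u^*$ whose orbit $\{u^*(t):t\in\R\}$ is pre-compact in $F_1$ modulo space translations, with $E_1(1+u^*)=\E_c$.

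The rigidity step — showing that such a translation-compact, sub-threshold $u^*$ must be a traveling wave $v_c(x-ct)$ of \eqref{GP} — is the step I expect to be by far the main obstacle. The natural tool is a localized virial/momentum identity: control the trajectory $x(t)$ of the center of mass, extract a weak limit of a Galilean-boosted copy of $u^*$, and identify this limit as a finite-energy critical point of the Gross--Pitaevskii action, hence a nontrivial traveling wave in the sense of \cite{BS1,Ch}. Two genuine difficulties arise. First, the background is non-zero, so the virial quantities must be written in terms of the renormalized density $|\psi|^2-1$ and compatibly with the $M$-transformed weighted energy framework; the momentum — and hence the speed $c$ — is carried by the $u_2=U^{-1}\Im v$ component, precisely the wave-like component that drives the bilinear losses analyzed in the rest of the paper. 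Second, finite-energy traveling waves exist only for $|c|<\sqrt{2}$ (\cite{Gr2}), so one must rule out the degenerate sonic limit $|c|\to\sqrt{2}$, which is exactly the regime where $H$ interpolates between wave- and Schr\"odinger-like behavior and where the bilinear estimates of the present paper are tightest. Once rigidity is established, \eqref{def E0} gives $E_1(1+u^*)\ge\E_0$, contradicting $E_1(1+u^*)=\E_c<\E_0$.

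For the homeomorphism between the open $\E_0^{1/2}$-balls in $F_1$ and $H^1$: the forward map $u(0)\mapsto z_+$ is well defined and continuous by the scattering result together with the Lipschitz stability theory developed alongside the profile decomposition, and $E_1(\psi)=\|\Br{\na}z_+\|_{L^2}^2$ follows by passing to the limit in \eqref{energy mapping} (uniqueness of $z_+$ for fixed $u$ is automatic from the unitarity of $e^{-itH}$). Injectivity in $u(0)$ is immediate from backward uniqueness of \eqref{GP} in $F_1$ (\cite{Ge}). Surjectivity and continuity of the inverse would follow by running the same machinery on the final-state problem: Theorem \ref{thm:fin} produces at least one pre-image for every $z_+\in H^1$, and the rigidity step below threshold upgrades this to uniqueness, since any two pre-images would have identical asymptotic profiles and the perturbation lemma would then force their difference to be a non-trivial translation-compact critical element with trivial $z_+$, contradicting the rigidity dichotomy.
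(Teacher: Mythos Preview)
The statement you are attempting to prove is Conjecture~\ref{conj}, which the paper explicitly presents as an open problem; there is no proof in the paper to compare against. The authors offer it as a sharpening of the conjecture in \cite{BS2} and do not claim to resolve it.

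Your proposed strategy has a genuine gap at its very first step. You assert that Theorem~\ref{thm:init} together with Theorem~\ref{thm:map} yields $\E_c>0$, but Theorem~\ref{thm:init} requires smallness of the \emph{weighted} norm \eqref{init u}, not smallness of the energy $E_1$. The paper is explicit on this point: ``Extending the above result to the energy space is a supercritical problem from the scaling view point, and so it seems beyond our current technology (it is critical in four dimensions, where we have the small energy scattering).'' In three dimensions, small-energy scattering in $F_1$ without the weight is not established, so the Kenig--Merle induction on energy cannot even begin: you have no base case. Correspondingly, the long-time perturbation lemma you invoke for the nonlinear profile decomposition would have to close in the unweighted energy topology, but all of the bilinear and phase-integration estimates behind Theorem~\ref{thm:init} rely essentially on the weighted space $X$ of \eqref{main est} to extract the decay needed to control the quadratic terms; none of them is available at pure energy regularity. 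Until a small-data scattering theory and a stability theory are available in $F_1$ itself, the concentration-compactness/rigidity scheme cannot be run, and this is precisely the supercritical obstruction the authors flag.
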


The rest of this paper is organized as follows. Sections 2--3 are preparatory. Section 4 deals with the final data problem. Sections 5--13 are devoted to the initial data problem. In the appendix, we give a correction to our previous paper \cite{vac2}. 

In more detail: in Section 2, we collect some notation and basic estimates used throughout the paper. In Section 3, we compute the normal forms including the previous and the new ones. 
In Section 4, we prove the final data result Theorem \ref{thm:fin} and also Theorem \ref{thm:map}. 
In Section 5, we start with the initial data problem by setting up the
main function space $X$ and deriving its decay properties. In Section
6, we derive the weighted estimates for the normal form transform.
Section 7 deals with the initial data. 
In Section 8, we estimate the Strichartz norms without weight. In Section 9, we treat the easier parts of the weighted estimate, namely those without the phase derivative, and also the quartic terms. In Section 10, we treat the main terms, namely the bilinear forms with the phase derivative, using a bilinear multiplier estimate allowing singularities, and assuming some bounds on the singular bilinear multipliers. Those bounds are proved in Section 11. The cubic terms with the phase derivative are estimated in Section 12. Finally we prove Theorem \ref{thm:init} in Section 13. 

\section{Preliminaries}
In this section we establish some notation and basic setting for our problem. 
First, for function spaces, we denote the Lebesgue, the Lorentz, the Sobolev and the Besov spaces by $L^p$, $L^{p,q}$, $H^{s,p}$ and $B^s_{p,q}$ respectively, for $1\le p,q\le \I$ and $s\in\R$. 
The homogeneous versions of the latter two are denoted by $\dot H^{s,p}$ and $\dot B^s_{p,q}$. 
We do not use the spaces with $0\le p<1$, but instead we use the following convention
\EQ{
 L^{1/p} = L^p,\pq L^{1/p,1/q} = L^{p,q},\pq H^{s,1/p} = H^{s,p},}
for convenience of explicit computation of the H\"older exponents. 
For any Banach space $B$, we denote the same space with the weak topology by $\weak{B}$. 

We denote the Fourier transform on $\R^d$ by
\EQ{
 \pt \F\fy = \ti\fy(\x) := \int_{\R^d} \fy(x)e^{-ix\x} dx,
 \pr \F_x^\x [f(x,y)] = (\F_x^\x f)(\x,y):= \int_{\R^d} f(x,y)e^{-ix\x} dx,}
and the Fourier multiplier of any function $\fy$ by
\EQ{
 \pt\fy(-i\na)f := \F^{-1}[\fy(\x)\ti f(\x)],
 \pr\fy(-i\na)_x f(x,y) := (\F_x^\x)^{-1}[\fy(\x)\F_x^\x[f(x,y)]].}

Next we introduce a Littlewood-Paley decomposition (homogeneous version) in the standard way. Fix a cut-off function $\chi\in C_0^\I(\R)$ satisfying $\chi(x)=1$ for $|x|\le 1$ and $\chi(x)=0$ for $|x|\ge 2$. For each $k\in 2^\Z$, we define 
\EQ{
 \chi^k(x) := \chi(|x|/k) - \chi(2|x|/k),}
so that we have $\chi^k\in C_0^\I(\R^d)$, 
\EQ{
 \supp\chi^k \subset\{k/2<|x|<2k\}, \pq \sum_{k\in 2^\Z} \chi^k(x) = 1\pq(x\not=0).}
Hence the Littlewood-Paley decomposition is given by
\EQ{ \label{LP decop}
  f = \sum_{k\in 2^\Z} \chi^k(\na) f.}
We denote the decomposition into the lower and higher frequency by
\EQ{ \label{freq split}
  f_{<k} := \sum_{j<k} \chi^j(\na) f, 
 \pq f_{\ge k} := \sum_{j\ge k} \chi^j(\na) f.}

For any function $B(\x_1,\dots,\x_N)$ on $(\R^d)^N$, we associate the $N$-multilinear operator $B[f_1,\dots f_N]$ defined by 
\EQ{
 \pt \F_x^\x B[f_1,\dots f_N] 
 \pn = \int_{\x=\x_1+\cdots+\x_N} B(\x_1\dots\x_N)\ti f_1(\x_1)\cdots\ti{f_N}(\x_N) d\x_2\cdots d\x_N,}
which is called a multilinear Fourier multiplier with symbol $B$, thus we identify the symbol and the operator. 
Whenever we write a symbol in the variables $(\x,\x_1,\dots\x_N)$, it should be understood as a funciton of $(\x_1,\dots\x_N)$ by substitution $\x=\x_1+\dots+\x_N$. Hence for example, a product of single multipliers can be written as
\EQ{
 B_0(\x)B_1(\x_1)B_2(\x_2)[f,g] = B_0(-i\na)\left\{(B_1(-i\na)f)\cdot(B_2(-i\na)g)\right\}.}

In addition, we introduce the following convention for bilinear multipliers. 
For any bilinear symbol $B(\x_1,\x_2)$, 
we assign the variables $\y$ and $\z$ such that $\y=\x_1$ and $\z=\x_2$, but regarding $(\x,\y)$ and $(\x,\z)$ respectively as the independent variables. 
Hence the partial derivatives of the symbol $B$ in each coordinates are given by
\EQ{
 \pt (\naxy B,\na_\y B) = (\na_{\x_2} B(\y,\x-\y), (\na_{\x_1}-\na_{\x_2})B(\y,\x-\y)),
 \pr (\naxz B,\na_\z B) = (\na_{\x_1} B(\x-\z,\z), (\na_{\x_2}-\na_{\x_1})B(\x-\z,\z)).}

For any number or vector $a$, we denote
\EQ{ \label{def UHetc}
 \pt \Br{a}:=\sqrt{2+|a|^2},\pq \hat{a}:=\frac{a}{|a|},
 \pq U(a):=\frac{|a|}{\Br{a}},\pq H(a):=|a|\Br{a},}
which will mostly appear in the Fourier spaces, in particular 
$U= \sqrt{-\De/(2-\De)}$ and $H= \sqrt{-\De(2-\De)}$. 
For any complex-valued function $f$, we often denote the complex conjugate by 
\EQ{
 f^+ := f, \pq f^-:=\bar{f},}
to treat them in a symmetric way. Also we denote  
\EQ{ \label{def J}
 J f = e^{-itH}xe^{itH}f, \pq \J f = \F e^{itH} f,\pq \Jp{f} = \F e^{\pm itH} f^\pm.}
Remark that these operations are time dependent. 

Finally, we collect a few basic estimates, which will be used throughout the paper. 
\begin{lemma}
Let $2\le p\le\I$, $0\le\th\le 1$, $s\in\R$, and $\s=1/2-1/p$. Then we have
\EQ{ \label{Bp dec H}
 \|e^{-itH}\fy\|_{\dot B^s_{p,2}} \lec |t|^{-(d-\th)\s}\|U^{(d-2+3\th)\s}\LR{\na}^{2\th\s}\fy\|_{\dot B^s_{p',2}},}
where $p'=p/(p-1)$ is the H\"older conjugate. For $2\le p<\I$, we have also
\EQ{ \label{Lp dec H}
 \|e^{-itH}\fy\|_{L^{p,2}} \lec |t|^{-(d-\th)\s}\|U^{(d-2+3\th)\s}\LR{\na}^{2\th\s}\fy\|_{L^{p',2}}.}
\end{lemma}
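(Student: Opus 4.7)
The plan is to reduce the full Besov and Lorentz estimates to a per-Littlewood--Paley-piece dispersive kernel bound, prove the two endpoint kernel bounds by stationary phase, and then interpolate elementarily in $\th$. Using $\|f\|_{\dot B^s_{p,2}}^2\sim\sum_{k\in2^\Z}2^{2sk}\|\chi^k(\na)f\|_{L^p}^2$, and the equivalence for $2<p<\I$ of $\|\cdot\|_{L^{p,2}}$ with the Littlewood--Paley square function in $L^p$, both assertions will follow from the localized bound
\[
\|e^{-itH}\chi^k(\na)\fy\|_{L^p}\lec U(k)^{(d-2+3\th)\s}\Br{k}^{2\th\s}|t|^{-(d-\th)\s}\|\chi^k(\na)\fy\|_{L^{p'}}
\]
uniformly in $k\in2^\Z$. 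Combining the trivial $p=2$ bound (unitarity of $e^{-itH}$) with Riesz--Thorin interpolation further reduces matters to the $p=\I$ endpoint, equivalently the pointwise kernel estimate
\[
|K_t^k(x)|\lec U(k)^{(d-2+3\th)/2}\Br{k}^{\th}|t|^{-(d-\th)/2}, \qquad K_t^k(x):=(2\pi)^{-d}\int_{\R^d}e^{i(x\x-tH(\x))}\chi^k(\x)\,d\x.
\]

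For the kernel I would apply stationary phase to the phase $\Phi(\x)=x\x-tH(\x)$, $H(\x)=|\x|\Br{\x}$. A direct computation of $\na^2 H$ in spherical coordinates gives one radial eigenvalue $H_{rr}\sim r$ for $r\le 1$ and $\sim 1$ for $r\ge 1$, and $d-1$ tangential eigenvalues $H_r/r\sim r^{-1}$ for $r\le 1$ and $\sim 1$ for $r\ge 1$. Rescaling $\x=k\eta$ in $K_t^k$ then produces two endpoint bounds: at $\th=0$, full $d$-dimensional stationary phase on the rescaled phase (using the non-degenerate Hessian, whose determinant $\det\na^2H\sim r^{2-d}$ for $r\le 1$ is absorbed by the rescaling factor $k^{2d}$) yields $|K_t^k|\lec k^{(d-2)/2}t^{-d/2}$ for $k\le 1$ and $|K_t^k|\lec t^{-d/2}$ for $k\ge 1$; at $\th=1$, stationary phase in only the $d-1$ tangential directions, with the remaining radial integral evaluated via spherical symmetry of $H$ and the decay of the associated Bessel-type oscillatory integral, yields $|K_t^k|\lec k^{(d+1)/2}t^{-(d-1)/2}$ for $k\le 1$ and $|K_t^k|\lec k\,t^{-(d-1)/2}$ for $k\ge 1$. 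In both frequency regimes the bounds match $U(k)^{(d-2+3\th)/2}\Br{k}^{\th}|t|^{-(d-\th)/2}$ at $\th\in\{0,1\}$.

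Given the two scalar bounds $A_0=U(k)^{(d-2)/2}t^{-d/2}$ and $A_1=U(k)^{(d+1)/2}\Br{k}\,t^{-(d-1)/2}$ on the same nonnegative quantity $\|K_t^k\|_{L^\I}$, the elementary interpolation $\|K_t^k\|_{L^\I}\le A_0^{1-\th}A_1^\th$ gives the target kernel bound at every $\th\in(0,1)$, so no operator-level complex interpolation is needed here. Summation over $k\in 2^\Z$ via Minkowski (Besov case) or Littlewood--Paley theory (Lorentz case) then assembles the full statements \eqref{Bp dec H}--\eqref{Lp dec H}. The main difficulty will be the endpoint $\th=1$ in the low-frequency regime $k\le 1$, where $H$ degenerates to the wave phase $\sqrt 2|\x|$ whose radial Hessian vanishes at the origin: the standard isotropic stationary phase then fails in the radial direction, and one must instead exploit the spherical symmetry of $H$ to rewrite the radial integral as a Bessel-type oscillatory integral and check that the higher-order corrections to $\sqrt 2|\x|$ in $H$ do not spoil the classical wave-equation decay in the regime $t\gec 1$ where the estimate is nontrivial.
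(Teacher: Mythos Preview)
Your core strategy---Littlewood--Paley localization, stationary-phase kernel bounds at the two endpoints $\th\in\{0,1\}$, scalar interpolation in $\th$, and Riesz--Thorin in $p$---is correct and coincides with the paper's approach; the paper simply refers to \cite{vac} for the $\th=0$ kernel bound and otherwise says ``standard stationary phase after Littlewood--Paley decomposition and interpolation with $L^2$ conservation.'' Your explicit Hessian computation and the two endpoint kernel bounds are accurate.

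The one place your proposal diverges, and where there is a genuine imprecision, is the passage to the Lorentz estimate \eqref{Lp dec H}. Your claimed equivalence of $\|\cdot\|_{L^{p,2}}$ with ``the Littlewood--Paley square function in $L^p$'' is false as stated (that square function characterizes $L^p$, not $L^{p,2}$); presumably you mean the square function in $L^{p,2}$. Even with that correction, summing per-piece scalar bounds $\|e^{-itH}\chi^k(\na)\fy\|_{L^p}\lec C_k(t)\|\chi^k(\na)\fy\|_{L^{p'}}$ into a Lorentz bound via the square-function characterization would require a \emph{vector-valued} dispersive estimate, which you have not established. The paper avoids this entirely: from the Besov bound one gets the plain $L^p$ estimate via the embeddings $\dot B^0_{p,2}\subset L^p$ and $L^{p'}\subset\dot B^0_{p',2}$ (valid for $2\le p<\I$), and then the Lorentz version follows by real interpolation of the resulting family of $L^p$ operator bounds. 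This is both simpler and rigorous; I would recommend you adopt it in place of your Littlewood--Paley assembly for the Lorentz case.
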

The case $\th=0$ is similar to the Schr\"odinger equation, but we gain $U^{(d-2)\s}$ around $\x=0$.  
The case $\th=1$ with $|\x|<1$ is the same as the wave equation, where we have worse decay for $t\to\I$, but better decay for $\x\to 0$, than the case $\th=0$. 
This reflects the fact that the group velocity for $e^{-itH}$ is bounded from below $|\na H(\x)|>\sqrt{2}$. For $|\x|\gec 1$, the estimate with $\th>0$ follows from interpolation between $\th=0$ and the Sobolev embedding, hence it is just the same as for Schr\"odinger.  
\begin{proof}
The estimate in the Besov space is proved by the standard stationary phase argument after the Littlewood-Paley decomposition, and its interpolation with $L^2$ conservation. See \cite{vac} for the details in the case $\th=0$. 
Then we can replace Besov with $L^p$ spaces by the embedding 
\EQ{
 \dot B^0_{p,2} \subset L^p, \pq \dot B^0_{p',2} \supset L^{p'},}
for $2\le p<\I$. The Lorentz version follows from real interpolation. 
\end{proof}
From the above decay estimate, we get the Strichartz estimate (cf. \cite{vac}), 
\EQ{ \label{Strz} 
 \|e^{-itH}f\|_{L^\I H^1 \cap U^{1/6}L^2 B^1_{6,2}} \lec \|f\|_{U^{-1/6}L^2 B^1_{6/5,2} + L^1 H^1},}
where we gain $U^{1/6}$ for the endpoint norm.  

For bilinear Fourier multipliers, we will use the Coifman-Meyer bilinear multiplier estimate \cite{CM} (see also \cite{GK}): if
\EQ{ \label{CMcond}
 |\p_{\x_1}^\al\p_{\x_2}^\be B(\x_1,\x_2)| \lec (|\x_1|+|\x_2|)^{-|\al|-|\be|},}
for up to sufficiently large $\al,\be$, then for any $p_j\in(1,\I)$ satisfying $1/p_0=1/p_1+1/p_2$, we have 
\EQ{ \label{bilCM}
 \|B[f,g]\|_{L^{p_0}(\R^d)} \lec \|f\|_{L^{p_1}(\R^d)} \|g\|_{L^{p_2}(\R^d)}.}
As was shown in \cite{GK}, one cannot generally replace the right hand side of \eqref{CMcond} by $|\x_1|^{-|\al|}|\x_2|^{-|\be|}$, but we can reduce our regular multipliers to the above case. 
However the above estimate is not applicable when the multipliers are really singular due to divisors coming from the phase integrations in the main estimates for Theorem \ref{thm:init}. 
In this case, we will use another bilinear estimate (Lemma \ref{sbil}), which allows general multipliers with singularities. 
As for regular multipliers, the following specific form will be often sufficient:    
\begin{lemma}
Let $d\in\N$ and $k\in\N$. Then we have  
\EQ{  \label{bil nonsing}
 \pt \sup_{0\le a\le 1}\left\|\frac{\Br{\x_1}^{2k(1-a)}\Br{\x_2}^{2ka}}{\Br{(\x_1,\x_2)}^{2k}}[f,g]\right\|_{L^{p_0}_x(\R^d)}
 \lec \|f\|_{L^{p_1}_x(\R^d)} \|g\|_{L^{p_2}_x(\R^d)},}
for any $p_0,p_1,p_2\in (1,\I)$ satisfying $1/p_0=1/p_1+1/p_2$, where $(\x_1,\x_2)$ denotes the $2d$ dimensional vector and so 
\EQ{
 \Br{(\x_1,\x_2)} = \sqrt{2+|\x_1|^2+|\x_2|^2}.}
\end{lemma}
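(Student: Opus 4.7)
The plan is to use Stein's complex interpolation, because the non-integer power $\Br{\x_j}^{2k(1-a)}$ for $a\in(0,1)$ spoils the Coifman--Meyer condition \eqref{CMcond} pointwise (although $|B_a|\le 1$ always, by the weighted AM--GM inequality $\Br{\x_1}^{2(1-a)}\Br{\x_2}^{2a}\le(1-a)\Br{\x_1}^2+a\Br{\x_2}^2\le\Br{(\x_1,\x_2)}^2$).

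I would embed the family into the analytic symbols
\EQ{
 B_z(\x_1,\x_2):=\frac{\Br{\x_1}^{2k(1-z)}\Br{\x_2}^{2kz}}{\Br{(\x_1,\x_2)}^{2k}}
 = e^{2k(1-z)\log\Br{\x_1}+2kz\log\Br{\x_2}}\Br{(\x_1,\x_2)}^{-2k}}
on the strip $\{z\in\C:0\le\Re z\le 1\}$, so $|B_z|=B_{\Re z}\le 1$. The endpoints $B_0=\Br{\x_1}^{2k}/\Br{(\x_1,\x_2)}^{2k}$ and $B_1=\Br{\x_2}^{2k}/\Br{(\x_1,\x_2)}^{2k}$ are smooth rational functions in the integer powers $\Br{\x_j}^2$ and $\Br{(\x_1,\x_2)}^2$, degree-$0$ homogeneous at infinity. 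A short induction on $|\al|+|\be|$ (each differentiation of $\Br{\x_j}^{2k}$ or $\Br{(\x_1,\x_2)}^{-2k}$ produces a factor of size $O(\Br{(\x_1,\x_2)}^{-1})$ with polynomial coefficients in $k$) yields $|\p_{\x_1}^\al\p_{\x_2}^\be B_j|\lec\Br{(\x_1,\x_2)}^{-|\al|-|\be|}\lec(|\x_1|+|\x_2|)^{-|\al|-|\be|}$ for $j\in\{0,1\}$, so \eqref{bilCM} gives $\|B_j[f,g]\|_{L^{p_0}}\lec\|f\|_{L^{p_1}}\|g\|_{L^{p_2}}$.

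On the imaginary boundaries of the strip I would factor
\EQ{
 B_{i\tau}=B_0\cdot\Br{\x_1}^{-2ki\tau}\Br{\x_2}^{2ki\tau},\pq
 B_{1+i\tau}=B_1\cdot\Br{\x_1}^{-2ki\tau}\Br{\x_2}^{2ki\tau},}
so that $B_{i\tau}[f,g]=B_0[\Br{-i\na}^{-2ki\tau}f,\Br{-i\na}^{2ki\tau}g]$, and analogously at $\Re z=1$. The imaginary-order Bessel multipliers $\Br{-i\na}^{\pm 2ki\tau}$ are bounded on $L^{p_j}$ for $1<p_j<\I$ with operator norm at most polynomial in $|\tau|$ (Mikhlin--H\"ormander applied to $\Br{\x}^{\pm 2ki\tau}$, whose derivative bounds scale polynomially in $\tau$). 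Composing with the endpoint bilinear estimates yields $\|B_{i\tau}[f,g]\|_{L^{p_0}}+\|B_{1+i\tau}[f,g]\|_{L^{p_0}}\lec(1+|\tau|)^A\|f\|_{L^{p_1}}\|g\|_{L^{p_2}}$ uniformly in $\tau\in\R$.

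For $f,g$ with compactly supported Fourier transform the map $z\mapsto B_z[f,g](x)$ is analytic on the strip with $|B_z[f,g](x)|$ uniformly bounded (since $|B_z|\le1$), so the family is of admissible growth. Stein's complex interpolation theorem for analytic families of bilinear operators then yields, at $z=a\in[0,1]$, the bound $\|B_a[f,g]\|_{L^{p_0}}\lec\|f\|_{L^{p_1}}\|g\|_{L^{p_2}}$ with constant independent of $a$, which extends to general $f,g$ by density, proving \eqref{bil nonsing}. The main obstacle is the genuine failure of the CM condition for interior $a\in(0,1)$: e.g.\ at $\x_1=0$ one finds $\p_{\x_1}^{2m}B_a|_{\x_1=0}\sim|\x_2|^{-2k(1-a)}$ for every $m\ge 1$, which beats the CM bound $|\x_2|^{-2m}$ as soon as $m>k(1-a)$; the imaginary-power factorization is the natural way to sidestep this loss.
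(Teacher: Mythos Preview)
Your proof is correct and follows essentially the same route as the paper's: embed $B_a$ in the analytic family $B_z$ on the strip, verify the Coifman--Meyer condition \eqref{CMcond} at the endpoints $z=0,1$, absorb the imaginary shift via the $L^p$-boundedness of the Bessel multipliers $\Br{\x}^{\pm 2ki\tau}$ with polynomial growth in $\tau$, and then apply the three lines theorem (Stein complex interpolation). The paper's write-up is terser but the argument is the same; your additional remarks on why the Coifman--Meyer condition genuinely fails for interior $a$ and on the admissible-growth hypothesis are accurate and useful context.
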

\begin{proof}
Denote the above operators by $B_a$, which is analytic on the strip $0\le\Re a\le 1$ as bilinear operators $(H^d)^2\to L^2$. When $a=0$ or $a=1$, $B_a$ satisfies \eqref{CMcond}, hence the estimate follows from Coifman-Meyer. Since the Fourier multiplier $\LR{t}^{-1-d}\LR{\x}^{it}$ is bounded on $L^p$ uniformly for $t\in\R$, we get
\EQ{
 \sup_{t\in\R} \LR{t}^{1+d}(\|B_{it}[f,g]\|_{L^{p_0}} + \|B_{1+it}[f,g]\|_{L^{p_0}}) \lec \|f\|_{L^{p_1}} \|g\|_{L^{p_2}},}
which is extended to the whole strip $0\le\Re a\le 1$ by the three lines theorem for analytic functions. 
\end{proof}

\section{Normal forms} \label{s:normal}
The idea of using normal forms for asymptotic behavior of nonlinear dispersive equations goes back to \cite{Sh} in the case of nonlinear Klein-Gordon equations. 
For the NLS, quadratic terms have in general resonance sets with one codimension (except for $\bar{u}^2$), which give rise to singularities in the normal forms. 
Here the resonance set is given in the spatial frequency where the time oscillation of the nonlinearity coincides with the linear evolution. We call it temporal resonance. 

In \cite{HMN} for the scattering of NLS in three dimensions, such
singularity was avoided by applying the normal form to remove only
some of the quadratic terms, leaving the others with better decay properties. 
Here the decay for the remainders comes from the fact that the oscillation phase of the nonlinearity relative to the linear evolution is not stationary at those spatial frequencies. We call it spatial resonance. 
When we simply say ``non-resonant", it means that it is {\it not simultaneously} temporally and spatially resonant. 

Our transforms follow the same idea as \cite{HMN}. However, the actual
forms and computations are totally different from the NLS, due to the
lower order term $2\Re u$ and the special structure of the
nonlinearity. In addition, we can not completely avoid singularities
because of some degeneracy in the non-resonance property, for which we need another idea (a bilinear estimate for irregular multipliers). 

In the previous paper \cite{vac2}, we introduced the normal form $M(u)$ as defined in \eqref{def M}. 
As is indicated in Theorem \ref{thm:map}, this transform is quite natural in view of the nonlinear energy, and will be essential for the proof of Theorem \ref{thm:fin}. 
However, it does not fit the weighted estimate for Theorem \ref{thm:init} very well, where it suffers from a derivative loss. 
To avoid assuming higher regularity, we introduce another normal form which is more suited to the weighted estimate. 
For that purpose, we start with general bilinear transforms. 

First we rewrite the equation for $u=\psi-1=u_1+iu_2$ in terms of the real variables:
\EQ{ \label{eq u}
 \pt \dot u_1 = -\De u_2 + 2 u_1u_2 + |u|^2 u_2,
 \pr \dot u_2 = -(2-\De)u_1 - 3u_1^2 - u_2^2 - |u|^2u_1.}
We consider a nonlinear transform given in the form\footnote{We will introduce $B_j$ and $C_j$ afterward by transforming $B_j'$ and $C_j'$.} 
\EQ{
 w := u+B_1'[u_1,u_1]+B_2'[u_2,u_2]}
where $B_j'$ are real-valued symmetric bilinear Fourier multipliers, which we will  specify later. From the above equation for $u$, we derive
\EQ{
 \pt (i\p_t+\De-2\Re)B_1'[u_1,u_1]
 \pr= -(2-\De)B_1'[u_1,u_1] + 2iB_1'[u_1,-\De u_2+2u_1u_2 + |u|^2u_2],
 \pr (i\p_t+\De-2\Re)B_2'[u_2,u_2]
 \pr= -(2-\De)B_2'[u_2,u_2] + 2iB_2'[u_2,(\De-2)u_1-3u_1^2 -u_2^2 -|u|^2u_1],}
and hence we get the equation for $w$:
\EQ{
 \pn(i\p_t + \De - 2\Re)w
 \pt= B'_3[u_1,u_1] + B'_4[u_2,u_2]
 \pn + iB'_5[u_1,u_2]
 \pr + |u|^2u_1
 \pn + iC'_3[u_1,u_1,u_2] + iC'_4[u_2,u_2,u_2]
 \pn  + iQ_1(u),}
where $B_j'$ ($j=3,4,5$) are bilinear multipliers defined by,  
\EQ{
 \pt B'_3 = 3-\Br{\x}^2B_1',
 \pq B'_4 = 1-\Br{\x}^2B_2',
 \pq B'_5 = 2+2|\x_2|^2B_1'-2\Br{\x_1}^2B_2',}
$C_j'$ ($j=3,4$) are cubic multipliers defined by
\EQ{ \label{def C'}
 \pt C'_3(\x_1,\x_2,\x_3) = 1 + 4B_1'(\x_1,\x_2+\x_3) - 6B_2'(\x_1+\x_2,\x_3), 
 \pr C'_4(\x_1,\x_2,\x_3) = 1 - 2 B_2'(\x_1+\x_2,\x_3),}
and $Q_1(u)=  2 B_1'[u_1,|u|^2u_2]- 2B_2'[u_2,|u|^2u_1]$ gathers the quartic terms. 

Now we choose our $B_1'$ and $B_2'$. Recall the diagonalized equation for $v=u_1 + i U u_2$: 
\EQ{ \label{eq v}
 iv_t - Hv = \Nv(u) := U(3u_1^2+u_2^2+|u|^2u_1) + i(2u_1u_2+|u|^2u_2),}
where we observe that we have $U^{-1}$ in each $u_2$ compared with $v$, and also on the imaginary part relative to the real part. 
To counteract this effect, we want  
\EQ{
 \pt B'_4 = O(|\x_1||\x_2|), \pq B'_5 = O(|\x||\x_2|), \pq C_4'=O(|\x_1|+|\x_2|+|\x_3|).}
It is easy to see that $B_4'=B_5'=0$ is impossible due to the symmetry of $B_1'$ and $B_2'$. 
The simplest choice for $B_4'=0$ is given by
\EQ{ \label{old def B12}
 \pt B_1'=B_2'=\Br{\x}^{-2},
 \pq B_4' = 0, \pq B_5' = 4\Br{\x}^{-2}\x\x_2, 
 \pq C_3'=C_4'=U^2, }
where $\x\x_2=\x\cdot\x_2$ denotes the inner product in $\R^3$. 
This leads to $M(u)$ in \eqref{def M}, and is the most natural from the energy viewpoint \eqref{energy mapping}. Moreover, the cubic part has a subtle but remarkable non-resonance property, which will be revealed in Section \ref{s:fin} for the proof of Theorem \ref{thm:fin}.  

On the other hand, the simplest choice for $B_5'=0$ is given by  
\EQ{ \label{def B12}
 \pt -B_1' = B_2' = \Br{(\x_1,\x_2)}^{-2} = \frac{1}{2+|\x_1|^2+|\x_2|^2},
 \pr B'_4 = \frac{-2\x_1\x_2}{2+|\x_1|^2+|\x_2|^2},
 \pq B'_5 = 0, 
 \pq C'_4 = \frac{|\x_1+\x_2|^2+|\x_3|^2}{2+|\x_1+\x_2|^2+|\x_3|^2},}
where we omit $C_3'$ since it is just complicated without any important structure. 
This choice is not directly linked to the energy, and the cancellation in the cubic part is weaker. Its advantage is that we do not lose a derivative for  $\LR{\x}\ll|\x_1|\sim|\x_2|$ as in $B_5'$ of \eqref{old def B12}. 
Such a derivative loss for high-high interactions can be saved by increasing the regularity.  
However, in the weighted estimate for bilinear terms, we will encounter another derivative loss \eqref{B^X bound2} by integration on the phase, and therefore we would need $H^2_x$ to close the estimates with the choice \eqref{old def B12}. 

The latter derivative loss can be observed in the Schr\"odinger case as well: 
look at the weighted estimate for the typical bilinear term $|e^{-it\De}\fy|^2$ in the Fourier space  
\EQ{
 \F_x^\x xe^{it\De}|e^{-it\De}\fy|^2 = i\na_\x\int e^{it\Om} \bar{\F\fy}(-\y)\F\fy(\x-\y) d\y,}
where $\Om := |\x|^2 - |\y|^2 + |\x-\y|^2$. 
Partial integration on $e^{it\Om}$ gives a factor of the form
\EQ{
 \frac{|\naxy\Om|}{|\na_\y\Om|} = \frac{|2\x-\y|}{|\x|},}
which causes a derivative loss in the region $|\x|\ll|\y|\sim|\y-\x|$. 
However, those two normal forms eventually lead to the same asymptotic behavior, in the case of weighted energy scattering. See Section \ref{ss:other normal}. 

We remark that there are other possible choices. For example, if we want decay in $B_3'$ as well, we may choose
\EQ{
 \pt B_1'=\Br{\x}^{-2}(3-4\Br{\x}^{-2}\x_1\x_2),
 \pq B_2' = \Br{\x}^{-2}(1-4\Br{\x}^{-2}\x_1\x_2),
 \pr B_3'=B_4'=4\Br{\x}^{-2}\x_1\x_2,
 \pr B_5'=4\Br{\x}^{-4}(4\x_0\x_2+4|\x_0|^2|\x_2|^2+2(\x_1\x_2)[\x_0(\x_0+4\x_1-4\x_2)],}
but it suffers from derivative loss in all $B_j'$, so we will not use it. 
In the rest of the paper, those notations $B_j'$, $C_j'$ and $Q_1$ are reserved for the second choice \eqref{def B12}.

Now we fix the notation for our transforms corresponding to \eqref{old def B12} and \eqref{def B12} respectively by\footnote{The pair $(v,z)$ in the previous paper \cite{vac2} is equal to $U^{-1}(v,\zo)$ in this paper.}
\EQ{ \label{w to z}
 \pt \zo := M(u) = v + \Br{\x}^{-2}|u|^2,
 \pr \zn := v + b(u):= v-\Br{(\x_1,\x_2)}^{-2}[u_1,u_1] + \Br{(\x_1,\x_2)}^{-2}[u_2,u_2].}
Then we get coupled equations for $(v,\zo)$ and $(v,\zn)$ in the form 
\EQ{ \label{eq uzo}
 \pt v = \zo - \Br{\x}^{-2}|u|^2, 
 \pq i\dot \zo - H\zo = \NO(u),}
\EQ{ \label{eq uzn}
 \pt v = \zn - b(u), 
 \pq i\dot \zn - H\zn = \NN(v).}
The nonlinear terms are given by 
\EQ{ \label{def NO}
 \NO(u) := U\left\{2u_1^2+|u|^2u_1\right\}
 \pn - i\Br{\na}^{-2}\na\cdot\left\{4u_1\na u_2+\na(|u|^2u_2)\right\},}
\EQ{ \label{def NN}
 \NN(v) \pt:= B_3[v_1,v_1] + B_4[v_2,v_2]
 \pn + C_1[v_1,v_1,v_1] +C_2[v_2,v_2,v_1]
 \prq + i C_3[v_1,v_1,v_2]  +i C_4[v_2,v_2,v_2]
 \pn + iQ_1(u),}
with the bilinear multipliers $B_j$ defined by using $B_j'$ in the case \eqref{def B12}, 
\EQ{
 \pt B_3 = U(\x)B'_3 = \frac{-2U(\x)(4+4|\x_1|^2+4|\x_2|^2-\x_1\x_2)}{2+|\x_1|^2+|\x_2|^2},
 \pr B_4 = U(\x)U(\x_1)^{-1}U(\x_2)^{-1}B_4' = \frac{-2U(\x)\Br{\x_1}\Br{\x_2}\hat{\x_1}\hat{\x_2}}{2+|\x_1|^2+|\x_2|^2},}
and the cubic multipliers $C_j$ defined by using $C_j'$ in \eqref{def C'}, 
\EQ{
 \pt C_1 = U(\x),
 \pq C_2 = U(\x)U(\x_1)^{-1}U(\x_2)^{-1},
 \pr C_3 = U(\x_3)^{-1}C'_3,
 \pq C_4 = U(\x_1)^{-1}U(\x_2)^{-1}U(\x_3)^{-1}C'_4,}
and the quartic term $Q_1$ is given by
\EQ{
 Q_1(u) = -2\Br{(\x_1,\x_2)}^{-2}[u_1,|u|^2u_2] - 2\Br{(\x_1,\x_2)}^{-2}[u_2,|u|^2u_1].}
We have the following bounds:
\EQ{ \label{bound on B}
 \pt|B_3|+|B_4| \lec U(\x),
 \pr|C_j| \lec U(\x_1)^{-1}U(\x_2)^{-1} + U(\x_2)^{-1}U(\x_3)^{-1} +
 U(\x_3)^{-1}U(\x_1)^{-1}.}
More precisely, they are written as linear combinations of
products of those terms on the r.h.s. of~\eqref{bound on B}
and regular multipliers as in \eqref{bil nonsing},
together with the Riesz operator. 
\begin{remark} \label{transBsq}
By the transform $u\mapsto v:=u-iH^{-1}\dot u$, the Boussinesq equation \eqref{Bsq} can be rewritten as 
\EQ{
 iv_t + H v = U(v_1^2),}
hence it can be regarded as a simplified version of our equation for $(v,\zo)$. In fact, our proof for Theorem \ref{thm:init} directly applies to this equation, while Theorem \ref{thm:fin} and Theorem \ref{thm:map} rely much more on the nonlinear energy structure, which is different for \eqref{Bsq}.  
For the (physically relevant) two dimensional case, refer to \cite{vac2} for a final data result, which is also applicable to this equation. 
\end{remark}

\section{Final data problem}\label{s:fin}
In this section, we consider the final state problem in the energy space, proving Theorems \ref{thm:fin} and \ref{thm:map}. 
Our strategy for Theorem \ref{thm:fin} is essentially the same as \cite{asshort} for NLS, using compactness to get weak convergence, and energy conservation to make it strong. 
The normal form transform $M(u)$ is essential both for the uniform bound and for the equicontinuity of approximating sequence of solutions. 
In addition, we will use a hidden non-resonance property in the cubic terms of \eqref{eq uzo} to get the equicontinuity. 
It is noteworthy that the argument in \cite{asshort} does not work for NLS without $L^2$ finiteness, whereas for GP we are using only the energy conservation. 
We start with the proof of Theorem \ref{thm:map}, since the result will be used in the proof of Theorem \ref{thm:fin}. 

\begin{proof}[Proof of Theorem \ref{thm:map}]
The identity \eqref{energy mapping} is straightforward. 
We construct $R$ by solving the following equation for $g$ with any given $f\in H^1(L,\ka)$:
\EQ{
 g_1=R_f(g_1) := f_1-\Br{\na}^{-2}\left\{g_1^2+(U^{-1}f_2)^2\right\}, \pq g_2=U^{-1}f_2.}
We use the following product estimate for general $u,v$: 
\EQ{ \label{prod in Rf}
 \|\Br{\na}^{-2}uv\|_{\dot H^1} \pt\lec \|U(uv)\|_{L^2} \sim \|\na(uv)\|_{H^{-1}}
 \pr\lec \|v\|_{\dot H^1}\|\na u\|_{H^{-1,3}} + \|u\|_{L^\I+L^6}\|\na v\|_{L^2}
 \pr\lec \|v\|_{\dot H^1}\left\{\|Uu\|_{L^3} + \|u\|_{L^6}^{1/2}\|Uu\|_{L^6}^{1/2} \right\}
 \pr\lec \|v\|_{\dot H^1}\|u\|_{\dot H^1}^{1/2}\|Uu\|_{L^6}^{1/2}.}
Applying it to $R_f(g_1)$, we get 
\EQ{
 \pt\|\Br{\na}^{-2}g_2^2\|_{\dot H^1} 
  \lec \|Ug_2^2\|_{L^2} \lec \|f\|_{L^6}^{1/2}\|f\|_{H^1}^{3/2},
 \pr\|R_f(u)-R_f(v)\|_{\dot H^1}
  \lec\|u+v\|_{\dot H^1}^{1/2}\|u+v\|_{L^6}^{1/2}\|u-v\|_{\dot H^1},}
and in addition,
\EQ{
  \pt\|\Br{\na}^{-2}g_1^2\|_{\dot H^1} \lec \|Ug_1^2\|_{L^2} \lec \|g_1\na g_1\|_{L^{3/2}}
   \lec \|g_1\|_{L^6}\|g_1\|_{\dot H^1},
  \pr\|\Br{\na}^{-2}g_1^2\|_{L^6} \lec \|g_1^2\|_{L^3} = \|g_1\|_{L^6}^2. }
Hence the map $g\mapsto R_f(g_1)+ig_2$ is a contraction on the set 
\EQ{
 \{g\in F_1(2L,C(\ka+L^{3/2}\ka^{1/2})) \mid g_2=U^{-1}f_2\},} 
for the $\dot H^1$ norm and some $C>0$, if $\ka>0$ is sufficiently small. 
Hence it has a unique fixed point $g_1=R_f(g_1)$, by which we can define $R(f)=g_1+iU^{-1}f_2$. Then we have $M\circ R=id$ and the uniqueness of $R$ follows from that for the fixed point. 
For the continuity of $R$, let $f,g\in H^1(L,\ka)$ and $u=R(f)$, $v=R(g)$. Then we have by \eqref{prod in Rf}, 
\EQ{
 \|U(|u|^2-|v|^2)\|_{L^2}
 \pt \lec \|u+v\|_{\dot H^1}^{1/2}\|u+v\|_{L^6}^{1/2}\|u-v\|_{\dot H^1}
 \ll \|u-v\|_{\dot H^1},}
if $\ka$ is small. Moreover,
\EQ{
 \|u-v\|_{\dot H^1}^2 = \|u_1-v_1\|_{\dot H^1}^2 + \|f_2-g_2\|_{H^1}^2.}
For the real component, we have
\EQ{
 \pt \|u_1-v_1\|_{\dot H^1}
 =\|R_f(u_1)-R_g(v_1)\|_{\dot H^1}
 \pr \le \|f_1-g_1\|_{\dot H^1} + \|\Br{\na}^{-2}(u_1^2-v_1^2)\|_{\dot H^1}
  + \|\Br{\na}^{-2}(U^{-1}f_2)^2-(U^{-1}g_2)^2\|_{\dot H^1}.}
where the second term is bounded by
\EQ{
 \|u_1+v_1\|_{L^6} \|u_1-v_1\|_{\dot H^1} \ll \|u_1-v_1\|_{\dot H^1},}
and the third term is bounded by using \eqref{prod in Rf}, 
\EQ{
 \|f_2-g_2\|_{H^1}\|f_2+g_2\|_{H^1}^{1/2}\|f_2+g_2\|_{L^6}^{1/2}
 \ll \|f_2-g_2\|_{H^1}.}
Hence for any $\e>0$, we can get by choosing $\ka>0$ small enough,  
\EQ{
 \|u_1-v_1\|_{\dot H^1}
 \le \|f_1-g_1\|_{H^1} + \e\|f_2-g_2\|_{H^1},}
and the desired Lipschitz condition follows.
\end{proof}

\begin{proof}[Proof of Theorem \ref{thm:fin}]
First we rewrite the equation \eqref{eq uzo} for $\zo=M(u)$, using the renormalized charge density $q(u):=|\psi|^2-1=2u_1+|u|^2$:
\EQ{ \label{eqz}
 \pt i\zo_t - H \zo = \NO^1(u)+\NO^2(u),
 \prq \NO^1 := \frac{U}{2}(q^2 - |u|^4 - 2u_1^3) 
  \pn- \frac{i\na}{\Br{\na}^2}\cdot\left\{2q\na u_2+u_1(2u_2\na u_1-u_1\na u_2)\right\},
 \prq \NO^2 := -U(u_2^2u_1) - i\Br{\na}^{-2}\na\cdot(u_2^2\na u_2).} 
Let $L=2\|\Br{\na}z_+\|_{L^2}+1$ and $z^0:=e^{-itH}z_+$. Since $\|z^0(t)\|_{L^6_x}\to 0$ as $t\to\I$, for large $T>0$ we have $z^0(T)\in H^1(L,\ka)$. Let $\psi^T$ be the global solution of \eqref{GP} satisfying
\EQ{
 \psi^T = 1 + u^T,\pq u^T(T) = R(z^0(T)),}
and let $\zo^T=M(u^T)$. Then we have $\zo^T(T)=z^0(T)$ and the above equation \eqref{eqz} with $(\zo,u)=(\zo^T,u^T)$ and $q=q^T=q(u^T)$. Moreover we have
\EQ{
 \|\Br{\na}\zo^T\|_{L^2} \le E_1(\psi^T)^{1/2} = \de(R(z^0(T)),0)\le 2\|\Br{\na}z_+\|_{L^2},}
so $\zo^T$, $u^T$ and $q^T$ are uniformly bounded in $H^1$, $F_1$ and $L^2$ respectively. 
By the Sobolev embedding, $u^T$ is bounded in $L^6_x$. Combined with the $L^2$ bound on $q^T$ and the fact that $(L^3+L^6)\cap L^2\subset L^3$, it implies also that $u_1^T$ and $q^T$ are bounded in $L^3_x$. 

Moreover, $u^T$ and $\zo^T$ are equicontinuous in $C(\R;\S'(\R^3))$ by their equations. 
Hence for some sequence $T\to\I$, $u^T$ and $\zo^T$ converge in $C(\R;\weak{\dot H^1})$ and $C(\R;\weak{H^1})$ respectively. 
We denote their limits by $u^\I$ and $z^\I$. 
Then $q^T$ also converges to $q^\I=q(u^\I)$ in $C(\R;L^2_{loc})$ and $C(\R;\weak{L^2})$. 
Also it is easy to check that $v^\I:=u_1^\I+iUu_2^\I$ and $z^\I$ satisfy the equation \eqref{eq uzo}. 

To get weak convergence of $e^{itH}\zo^T$ globally in time, we need its equicontinuity at $t=\I$, i.e., for any test function $\fy\in\S(\R^3)$ we have
\EQ{
 \int_{t_1}^{t_2} \PX{\fy}{e^{itH}\NO(u^T(t))} dt \to 0}
as $t_2>t_1\to\I$ uniformly in $T$. Those terms in $\NO^1(u)$ are bounded respectively by (here we omit $T$), 
\EQ{
 \pt\|q^2\|_{L^{12/11}} \lec \|q\|_{L^2\cap L^3}^2,
 \pr\|U|u|^4\|_{L^{12/11}} \lec \|u^3\na u\|_{L^1} \lec \|u\|_{L^6}^3\|\na u\|_{L^2},
 \pr\|U(u_1^3)\|_{L^{12/11}} \lec \|u_1^2\na u_1\|_{L^1} \lec \|u_1\|_{L^3\cap L^6}^2\|\na u\|_{L^2}.
 \pr\|q\na u_2\|_{L^{12/11}} \lec \|q\|_{L^2\cap L^3}\|\na u\|_{L^2},
 \pr\|u_1u\na u\|_{L^{12/11}} \lec \|u_1\|_{L^3\cap L^6}\|u\|_{L^6}\|\na u\|_{L^2},}
hence by using the $L^{12}$ decay of $e^{itH}$ \eqref{Lp dec H}, 
\EQ{
 \int_{t_1}^{t_2} |\PX{\fy}{e^{itH}\NO^1(u^T(t))}| dt \lec \int_{t_1}^{t_2} t^{-5/4} dt \lec t_1^{-1/4} \to 0.}
The remaining term $\NO^2$ is bounded only in $L^{6/5}_x$, which gives by the $L^p$ decay the critical decay rate $1/t$, and so we cannot get compactness in this way. 
More precisely, when we decompose them into dyadic frequencies by \eqref{LP decop} 
\EQ{
 \NO^2 = -\sum_{j,k\in 2^\Z} U\left\{\chi(\na)^j(u_2^2) \chi(\na)^k u_1\right\} + i\frac{\na}{\Br{\na}^2}\cdot\left\{\chi(\na)^j(u_2^2)\chi(\na)^k\na u_2\right\},}
we have trouble only when $j\ll\min(1,k)$, since otherwise $U$ or $\na$ outside gives the factor $j$ and so those terms are bounded in $L^{12/11}_x$ as above. 
In order to bound the remaining part, we use a non-resonance property due to its  special structure:
\EQ{ \label{decop NO1}
 \NO^2 = -U(u_2^2\bar{v}) - i\Br{\na}^{-2}\left\{H(\x)U(\x_2)-\x\x_2\right\}[u_2^2,u_2],}
where the second bilinear multiplier has the small factor $\x_1$, since it equals 
\EQ{
 \pt \Br{\x}^{-2}\left\{|\x|(\Br{\x}U(\x_2)-|\x|)+|\x|^2-\x\x_2\right\}
 \pr = \left\{ \frac{|\x||\x_2|}{(\Br{\x}+\Br{\x_2})\Br{\x_2}}
 - \frac{|\x|}{|\x|+|\x_2|} \right\}\frac{(\x_1+2\x_2)\cdot\x_1}{\Br{\x}^2} + \frac{\x\x_1}{\Br{\x}^2},}
and the multiplier inside the big braces is bounded by the Coifman-Meyer estimate \eqref{bilCM} if restricted to $|\x_1|\ll|\x_2|\sim|\x|$. So this term is bounded in $L^{12/11}_x$. 

For the first term in \eqref{decop NO1}, we integrate on $e^{it(H(\x)+H(\x_2))}$ 
\EQ{ \label{IBP for fin}
 \pt\int_{t_1}^{t_2} e^{it(H(\x)+H(\x_2))}U[u_2^2,\bar{e^{itH}v}] dt
 \pr= [-ie^{itH}\BB[u_2^2,\bar{v}]]_{t_1}^{t_2}
  + \int_{t_1}^{t_2} ie^{itH}\BB[2u_2\dot u_2,\bar{v}] - e^{itH}\BB[u_2^2,\bar{\Nv(u)}] dt,}
with the bilinear multiplier $\BB$ defined by 
\EQ{
 \BB = (H(\x)+H(\x_2))^{-1}U(\x) = \Br{\x}^{-2}\left\{1+H(\x_2)H(\x)^{-1}\right\}^{-1},}
where the multiplier in the braces is bounded by Coifman-Meyer \eqref{bilCM} if $|\x_1|\ll|\x_2|\sim|\x|$. From the equation \eqref{eq u}, we have
\EQ{
 \pt \dot u_2 = -q + \De u_1 - q u_1 \in L^\I_t H^{-1}_x,
 \pr \Nv(u) = U(|u|^2+q u_1) + iqu_2 \in L^\I_t(L^{3/2}\cap L^2)_x,}
so $u_2\dot u_2\in \dot H^1\times H^{-1}$ is bounded in $H^{-1,3/2}\cap H^{-2}$. 
Hence \eqref{IBP for fin} restricted to $|\x_1|\ll\min(1,|\x_2|)$ is bounded in 
$t_1^{-1/2}L^3_x + t_1^{-1/4} L^{12}_x$. 

\newcommand{\K}{\acute}
Thus we conclude that ${\K\zo}^T:=e^{itH}\zo^T$ is equicontinuous in $C([-\I,\I];\S'(\R^3))$, and so by choosing appropriate sequence $T\to\I$, we get 
\EQ{
 {\K\zo}^T \to {\K\zo}^\I \IN{C([-\I,\I];\weak{H^1})},} 
together with ${\K\zo}^\I(\I)=z_+$ from the initial condition $\zo^T(T)=e^{-iTH}z_+$. 
By the lower semi-continuity for $T\to\I$ (along the sequence), we have 
\EQ{
 E_1(\psi^\I) \pt= \|\Br{\na}\zo^\I\|_{L^2}^2 + \frac{1}{2}\|U|u^\I|^2\|_{L^2}^2
 \pn \le \liminf_{T\to\I} E_1(\psi^T(T))
 = \|\Br{\na}z_+\|_{L^2}^2,}
and for $t\to\I$,
\EQ{
 \|\Br{\na}z_+\|_{L^2}^2 \le \liminf_{t\to\I} E_1(\psi^\I(t)) = E_1(\psi^\I).}
Hence both inequalities must be equality, which implies that the convergence is strong in both cases. 
Hence $e^{itH}\zo^\I(t)\to z_+$ strongly in $H^1_x$ as $t\to\I$. 
\end{proof}
\begin{remark} \label{L2 unbd}
We can show that for some asymptotic profiles $z_+\in H^1$, the above constructed solution has $v_1$ unbounded in $L^2_x$ for large $t$ ($v_2$ is bounded in $L^2_x$ by the energy). 
So we really need the nonlinear map $M$ to have the scattering statement \eqref{scat z} in $H^1$. 
The proof goes by contradiction. 
If $v_1=u_1$ is uniformly bounded in $L^2$, by the $L^2$ bound of $q$, $|u|^2$ is also bounded. 
Then it is easy to see that $\NO(u)$ is bounded in $UL^p_x$ for $1\le p\le 6/5$. 
Thus we have by the $L^p$ decay with $1/p:=1/6-\e$ for $\e>0$ small,  
\EQ{
 \|e^{i(t-T)H}\zo(t) - \zo(T)\|_{U^{1+1/3+\e}L^p_x} 
 \pt= \|\int_T^t e^{isH}\NO(u(s))ds\|_{U^{4/3+\e}L^p_x} 
 \pr\lec \int_T^t s^{-1-3\e}\|\NO(u(s))\|_{UL^{5/6+\e}_x} ds \lec T^{-3\e},}
and so by letting $t\to\I$, we get
\EQ{
 e^{-iTH}z_+-\zo(T) \in U^{4/3+\e}L^p_x.}
Since $U^{-1}z_2(T)=u_2(T)\in L^\I_TL^4_x\subset L^\I_T\dot H^{-1/2-\e/3,p}$ for $T$ large, we have 
\EQ{
 (\Im e^{-iTH}z_+)_{<1} = (Uu_2(T) + U^{4/3+\e}L^\I_TL^p_x)_{<1}
 \subset U^{4/3}L^\I_TL^p_x,}
which is clearly not implied by the assumption $z_+\in H^1$. 
For example, let 
\EQ{
 z_+ = \sum_{k\in 4^{-\N}}ie^{iH/k}k^{3/2+3\e}f(k x),}
for some $f\in\S$ satisfying $\supp\ti f\subset\{1<|\x|<2\}$. Then $z_+\in H^1$ but
\EQ{
 \|U^{-4/3}e^{-iH/k}z_+\|_{\dot B^0_{p,\I}} \gec k^{-4/3+3/2+3\e-3/p}\|f\|_{L^p_x} \to \I \pq (1/k\to\I).}

However it is not clear whether $\|v_1(t)\|_{L^2_x}$ is growing, or infinite for all time ($v(t)\in L^2_x$ persists in time by the equation for $v$). We expect that both cases do happen, depending on the asymptotic data. 
\end{remark}

\section{Initial data problem}
The strategy for Theorem \ref{thm:init} largely follows that in \cite{HMN} for the NLS: 
we will derive an a priori bound on the weighted energy norm of the solution after removing the free propagator, namely $e^{itH}v(t)$. Then the a priori bound gives space and time decay properties of the solution by getting back the propagator, and this decay property is used in turn to bound the Duhamel integral for the a priori estimate. 
However, the linear decay property is not sufficient to bound the quadratic terms, so that we have to integrate by parts on the phase to gain more integrability and decay from the oscillations. At this point we meet the special difficulty of our equation, that is the singularity due to degeneration of the nonresonance property around $\x\to 0$. 
To treat it, we will derive a bilinear estimate for singular Fourier multipliers, and investigate the shape of the degeneration in a similar way as in our previous paper \cite{vac2} for the two dimensional case. 

Now we set up our function spaces. We will derive iterative estimates for both $v$ and $\zn$ of \eqref{eq uzn} in the following space-time norms. Recall the notation \eqref{def J} for $J$. 
\EQ{ \label{main est}
 \pt \|\zn(t)\|_{X(t)} := \|\zn(t)\|_{H^1_x} + \|J\zn(t)\|_{H^1_x},
 \pq \|\zn\|_X := \sup_t \|\zn(t)\|_{X(t)},
 \pr \|\zn\|_S := \|\zn\|_{L^\I_t H^1_x} + \|U^{-1/6}\zn\|_{L^2_t H^{1,6}_x}.}
The last norm is finite for $e^{-itH}\fy$ if $\fy\in H^1(\R^3)$, by the Strichartz estimate \eqref{Strz}. 
The key ingredient is the $H^1$ bound on $J\zn$, which can be written in the Fourier space
\EQ{
 \|J\zn\|_{H^1_x} \sim \|\LR{\x} \na_\x \J \zn\|_{L^2_\x}.}
More precisely, we are going to prove 
\EQ{
 \left\|\int_T^t e^{-i(t-s)H}\NN(s) ds\right\|_{X(T,\I)} \lec \LR{T}^{-\e},}
for some small $\e>0$. 
This estimate implies the desired scattering for $\zn$ in $\LR{x}^{-1}H^1$.  
The scattering for $v$ is the same, because we will prove that the difference $b(u)$ is vanishing faster in the same space $X$. 

\subsection{Decay property by the weighted norm}
We derive a few decay properties as $t\to\I$ and $\x\to 0$ of the $X$ space \eqref{main est}. First, the commutator relations $[\na_j,J_k] = \de_{j,k}$ and $[\Br{\na},J]=-\Br{\na}^{-1}\na$ imply  
\EQ{
 \|J \na v(t)\|_{L^2_x} + \|J\Br{\na}v(t)\|_{L^2_x} \lec \|Jv(t)\|_{H^1_x} + \|v(t)\|_{L^2_x}.}
Since $1/|x|\in L^{d,\I}(\R^d)$, we have by the Sobolev and the H\"older, 
\EQ{
 \|v(t)\|_{\dot H^{-1}_x} \pt\sim \|e^{itH}v(t)\|_{\dot H^{-1}_x}
 \pr\lec \|e^{itH}v(t)\|_{L^{6/5,2}_x} \lec \|xe^{itH}v(t)\|_{L^2_x}
 \sim \|Jv(t)\|_{L^2_x}.}
Thus we obtain 
\EQ{ \label{v L2 bounds}
 \|U^{-2}v\|_{L^6_x} 
 \pt \lec \|v\|_{\dot H^{-1}} + \|v\|_{H^1_x} \sim \|U^{-1}v\|_{H^1_x} 
 \pn \lec \|v(t)\|_{X(t)}.}

Choosing $\th=0$ and $p=6$ in the $L^p$ decay estimate \eqref{Lp dec H}, we have 
\EQ{
 \|\LR{\na}U^{-1/3}v(t)\|_{L^6_x}
  \pt\lec t^{-1}\|\LR{\na}e^{itH}v(t)\|_{L^{6/5,2}_x}
  \pr\lec t^{-1}\|J\Br{\na}v(t)\|_{L^2_x}
  \lec t^{-1}\|v(t)\|_{X(t)}.}
Combining with the Sobolev bound \eqref{v L2 bounds}, we get 
\EQ{ \label{v decay}
 \pt \||\na|^{-2+5\th/3} v_{<1}(t)\|_{L^6_x} \lec \min(1,t^{-\th})\|v(t)\|_{X(t)},
 \pr \||\na|^{\th} v_{\ge 1}(t)\|_{L^6_x} \lec \min(t^{-\th},t^{-1})\|v(t)\|_{X(t)},}
for $0\le\th\le 1$, where $v_{<1}$ and $v_{\ge 1}$ denote the smooth separation of frequency \eqref{freq split}. 
In particular, we have
\EQ{ \label{U-1vL6}
 \pt \|U^{-1}v(t)\|_{L^6_x} \lec \LR{t}^{-3/5}\|v(t)\|_{X(t)},
 \pq \|U^{-1}v\|_{L^2_t H^{1,6}_x} \lec \|v\|_{S\cap X}.}
Also we obtain the Strichartz bound on $u=v_1+iU^{-1}v_2$:
\EQ{ \label{Stz u}
 \pt \|u\|_{L^\I H^1} \lec \|U^{-1}v\|_{L^\I H^1} \lec \|v\|_X,
 \pr \|u\|_{L^2 H^{1,6}}
 \lec \|v_{\ge 1}\|_{L^2 H^{1,6}} + \|U^{-1}v_{<1}\|_{L^2 L^6} \lec \|v\|_{X\cap S}.}

For the estimate on $b(u)$, it is better to use the wave-type decay. Choosing $p=4$ and $\th=2/3$ in \eqref{Lp dec H}, and using complex interpolation, we have
\EQ{ \label{U-1vL4}
 \|\LR{\na}^{2/3}U^{-1}v(t)\|_{L^4_x}
  \pt\lec t^{-7/12}\|U^{-1/4}\LR{\na}e^{itH}v(t)\|_{L^{4/3,2}_x}
  \pr \lec t^{-7/12}\|\LR{\na}e^{itH}v(t)\|_{L^{6/5,2}_x}^{3/4}\|U^{-1}\LR{\na}e^{itH}v(t)\|_{L^2_x}^{1/4}
  \pr \lec t^{-7/12}\|v(t)\|_{X(t)}.}

\section{Estimates on the normal form}
In this section, we derive decay estimates on $b(u)$ (defined in \eqref{w to z}), and the invertibility of the mapping $v\mapsto \zn$. 
First from the $L^p$ decay property of $v$ and the bilinear estimate \eqref{bil nonsing}, we have 
\EQ{ \label{b bound}
 \pn\|b(u)\|_{H^{2,p}_x}
  \pt\lec \|U^{-1}v\|_{L^{p_1}_x} \|U^{-1}v\|_{L^{p_2}_x} \lec \|U^{-1}v\|_{L^2_x}^{2-\th} \|U^{-1}v\|_{L^6_x}^{\th} 
  \pr\lec \LR{t}^{-3\th/5}\|v(t)\|_{X(t)}^2,}
for $0<\th\le 2$, where
\EQ{
 \frac{1}{p} = 1 - \frac{\th}{3} = \frac{1}{p_1}+\frac{1}{p_2},
 \pq 2\le p_1,p_2\le 6.}
For the weighted norm, we compute $Jb(u)$ in the Fourier space, using the notation in \eqref{def J}. It is given by a linear combination of terms of the form 
\EQ{
 \pt e^{-itH(\x)}\na_\x \int_{\x=\y+\z} e^{itH(\x)\mp H(\y)\mp H(\z)}B(\y,\z)\Jp v(\y) \Jp v(\z) d\y
 \pr = e^{-itH(\x)}\int e^{it\Om}\Bigl[(\naxy B + it\naxy\Om\cdot B) \Jp v(\y) \Jp v(\z) + B \Jp v(\y) \na \Jp v(\z)\Bigr] d\y
 \pr = \F \left[(\naxy B+it\naxy\Om\cdot B)[v^\pm,v^\pm] + B[v^\pm,(Jv)^\pm]\right],}
with $B=\Br{(\x_1,\x_2)}^{-2}$ or $\Br{(\x_1,\x_2)}^{-2}U(\x_1)^{-1}U(\x_2)^{-1}$, and
\EQ{
 \pt \Om := H(\x)\mp H(\y)\mp H(\z), \pq \naxy\Om=\na H(\x)\mp\na H(\z).}
By the bilinear estimate \eqref{bil nonsing} and the $L^p$ decay property \eqref{v L2 bounds}, \eqref{U-1vL6} and \eqref{U-1vL4}, we have 
\EQ{ \label{est Jb} 
 \pt \|\naxy B[v^\pm,v^\pm]\|_{H^1_x}
  \lec \|U^{-1}v\|_{L^3_x} \|U^{-2}v\|_{L^6_x} \lec \LR{t}^{-3/10}\|v\|_X^2, 
 \pr \|t\naxy\Om\cdot B[v^\pm,v^\pm]\|_{H^1_x}
  \lec \|t^{1/2}U^{-1}v\|_{L^4_x}^2
   \lec \LR{t}^{-1/6}\|v\|_X^2,
 \pr \|B[v^\pm,(Jv)^\pm]\|_{H^1_x}
  \lec \|U^{-1}v\|_{L^3_x} \|U^{-1}Jv\|_{L^6_x}
    \lec \LR{t}^{-3/10}\|v\|_X^2.}
Thus we obtain
\EQ{ \label{dec bX}
 \pt\|b(u)(t)\|_{H^1_x} \lec \LR{t}^{-9/10}\|v\|_{X(t)}^2, 
 \pq \|Jb(u)(t)\|_{H^1_x} \lec \LR{t}^{-1/6}\|v\|_{X(t)}^2,
 \pr \|b(u)\|_{S(T_1,T_2)} \lec \LR{T_1}^{-7/10}\|v\|_{X(T_1,T_2)}^2,}
for any $t\ge 0$ and any $T_2\ge T_1\ge 0$. 
Hence by the Banach fixed point theorem, the map $v\mapsto v+b(u)$ is bi-Lipschitz in small balls around $0$ of $X(t)$ uniformly for all $t\in\R$, and also globally in $X\cap S$. 

\subsection{Other normal forms} \label{ss:other normal}
It is a natural question if we get the same asymptotics for other normal forms, in particular, for $\zo$ defined in \eqref{w to z}. 
It is easy to see that the above argument works to get
\EQ{
 \|\Br{\na}^{-2}|u|^2\|_{X(t)} \lec \LR{t}^{-1/6}\|v\|_X^2.}
For this, we need to modify the second inequality in \eqref{est Jb} for the interaction with $|\x|+1\ll|\x_1|\sim|\x_2|$ because we lose regularity. Using the Schr\"odinger type decay estimate in that region, we get nevertheless
\EQ{
 \|t\naxy\Om\cdot\Br{\x}^{-2}[u^\pm,u^\pm]\|_{H^1_x}
 \pt\lec \|t^{1/2}U^{-1}v\|_{L^4_x}^2 + \|tv_{>1}\|_{H^{1,6}_x}\|v_{>1}\|_{L^3_x}
 \pr\lec \LR{t}^{-1/6}\|v\|_{X}^2.}
Therefore we have
\EQ{
 \|\zo(t)-v(t)\|_{X(t)} \lec \LR{t}^{-1/6}\|v\|_X^2,}
in other words, $\zo$, $v$ and $\zn$ have the same asymptotics in the strong topology of $X$, as long as $v$ is in $X$.  

\section{Initial condition}
Here we check that the smallness condition \eqref{init u} on $u(0)$ is equivalent to smallness of $v(0)$ in $X(0)$. 

By Sobolev and H\"older in Lorentz spaces, \eqref{init u} implies 
\EQ{
 \|u(0)\|_{L^2_x} \lec \|\na u(0)\|_{L^{6/5,2}_x}
 \lec \|x\na u(0)\|_{L^2_x} \lec \de.}
Hence we have 
\EQ{
 \|u(0)\|_{H^1_x} + \|xu_1(0)\|_{L^2_x} + \|x\na u_1(0)\|_{L^2_x} 
  + \|x\na u_2(0)\|_{L^2_x} \lec \de.}
Moreover, by using the commutator $[x,U]=\F^{-1}[i\na_\x,U(\x)]\F = i\na_\x U$,   
\EQ{
 \|\LR{\na}x U u_2\|_{L^2_x}
 \pt\le \|\LR{\na}U x u_2\|_{L^2_x} + \|\LR{\na}(\na_\x U)u_2\|_{L^2_x}
 \pr\lec \|\na xu_2\|_{L^2_x} + \|u_2\|_{L^2_x},}
hence we get $\|\LR{x}v(0)\|_{H^1_x} \lec \de$. 

On the other hand, since 
\EQ{
 [J_j,\na_k U^{-1}] = -\de_{j,k}U^{-1} + \na_{x_k} i \na_{\x_j} U^{-1},}
we have
\EQ{ \label{com Ju}
 \|J\na u\|_{L^2_x}+\|\na Ju\|_{L^2_x}
 \pt\lec \|U^{-1}\na Jv\|_{L^2_x} + \|[J,\na U^{-1}]v\|_{L^2_x} + \|u\|_{L^2_x}
 \pr\lec \|Jv\|_{H^1_x} + \|U^{-1}v\|_{L^2_x} \lec \|v(t)\|_{X(t)}.}
In particular, by letting $t=0$, 
\EQ{
 \|\Br{x}u_1(0)\|_{L^2_x} + \|\Br{x}\na u(0)\|_{L^2_x}
  \lec \|\Br{x}v(0)\|_{H^1_x}.}

\section{Estimates without weight} \label{ss:est w/o w}
In this section, we estimate $\zn$ in the Strichartz norm $L^\I_t H^1_x \cap U^{1/6}L^2_t H^{1,6}_x$,  
by putting the nonlinear terms $\NN(v)$ in the dual Strichartz space, as in \eqref{Strz}. 

The bilinear terms are bounded by using \eqref{bil nonsing}:  
\EQ{ \label{Stz on B}
 \pt \|B_j[v^\pm,v^\pm]\|_{L^{4/3}_t H^{1,3/2}_x} 
  \lec \|v\|_{L^\I H^1} \|v\|_{L^{4/3} L^6} \lec \|v\|_{X\cap S}^2,}
for $j=3,4$, where the last factor is bounded by the decay estimate \eqref{v decay}. 
On interval $t\in(T,\I)$ for $T>1$, we can estimate instead
\EQ{
  \pn\|B_j[v^\pm,v^\pm]\|_{L^1_t H^1_x}
  \pt\lec \|v\|_{L^4_{t>T} H^{1,3}} \|v\|_{L^{4/3}_{t>T} L^6}
  \pr\lec T^{-1/2}\|v\|_{L^\I H^1}^{1/2} \|tv\|_{L^\I_{t>T} H^{1,6}_x}^{3/2}
  \pn\lec T^{-1/2} \|v\|_X^2.} 
 
The cubic terms $C_j$ with $j=1,2,3,4$ are similarly bounded by 
\EQ{ \label{Stz on C}
 \|C_j[v^\pm,v^\pm,v^\pm]\|_{L^2_t H^{1,6/5}_x}
 \pt\lec \|U^{-1}v\|_{L^\I H^1} \|U^{-1}v\|_{L^4 L^6}^2
 \pn\lec \|v\|_{X}^3,}
by using \eqref{v L2 bounds} and \eqref{v decay}, together with the regular bilinear estimate \eqref{bil nonsing}. For $t>T$ we gain at least $T^{2(-3/5+1/4)}=T^{-7/10}$ from the $L^6_x$ decay of $U^{-1}v$. 
The quartic term $Q_1$ is estimated by using \eqref{Stz u},
\EQ{
 \pt\|Q_1(u)\|_{L^{4/3} H^{1,3/2}}
 \pn\lec \|u\|_{L^4 H^{1,3}}^3 \|u\|_{L^\I L^6}
 \lec \|v\|_{X\cap S}^4.}
For $t>T$ we gain at least $T^{-3/5}$ from the $L^6_x$ decay of $u=v_1+iU^{-1}v_2$.

In conclusion, we have 
\EQ{
 \pn\left\|\int_T^t e^{-i(t-s)H}\NN ds\right\|_{S(T,\I)}
 \pt\lec \|\NN\|_{L^2_{t>T} H^{1,6/5} + L^1_{t>T} H^1} \pr\lec T^{-1/2}(\|v\|_{X\cap S}^2+\|v\|_{X\cap S}^4),}
so once we get a uniform bound on $\|v\|_{X\cap S}$, the scattering of $\zn$ and $v$ in $H^1$ follows: 
\EQ{ \label{scat z H1}
 \exists! v_+\in H^1,\pq \|\zn(t)-e^{-itH}v_+\|_{H^1} + \|v(t)-e^{-itH}v_+\|_{H^1} \lec \LR{t}^{-1/2},}
where we used \eqref{dec bX}.

\section{Estimates without phase derivative}
Now we proceed to the $L^\I_t H^1_x$ bound on $J\zn$. 
First we rewrite the equation for $\zn$ by replacing $v$ with $\zn$ in the bilinear terms:
\EQ{
 \pt \NN(v)=B_3[\zn_1,\zn_1] + B_4[\zn_2,\zn_2] 
  + \sum_{j=1}^5 C_j + Q_1(u) + Q_2(u) =: \NN'(v,\zn),
 \pr C_5(v,v,\zn):=-2B_3[b(u),\zn_1], \pq Q_2(u):=B_3[b(u),b(u)],}
because we are going to integrate on the phase in time (in some Fourier region), 
where the difference of oscillation between $\zn$ and $v$ will become essential. 
In the higher order terms, it is negligible thanks to the time decay of $b(u)$. 

Applying $J$ to $\NN'$, we get bilinear terms in the Fourier space like
\EQ{
  \int_0^t\int \naxy \left\{e^{is\Om}B_j(\y,\x-\y)\Jp{\zn}(s,\y) \Jp{\zn}(s,\x-\y)\right\} d\y ds,}
with $j=3,4$ and the phase $\Om := H(\x)\mp H(\y)\mp H(\x-\y)$. 
If the derivative $\naxy$ lands on $B_j$, then since $\naxy B_j$ is a bounded multiplier, its contribution is estimated as above. 
If $\naxy$ lands on $\Jp{\zn}$, we get terms of the form in the physical space 
\EQ{
 \int_0^t e^{isH} B_j[\zn^\pm,(J \zn)^\pm] ds,}
whose contribution in $H^1_x$ is bounded by using the Strichartz together with the regular bilinear estimate \eqref{bil nonsing}, 
\EQ{
 \|B_j[\zn^\pm,(J \zn)^\pm]\|_{L^{4/3}_t H^{1,3/2}_x}
 \pt\lec \|\zn\|_{L^{4/3} H^{1,6}} \|J \zn\|_{L^\I H^1}
 \pn\lec \|\zn\|_{X\cap S}^2, }
where the $L^{4/3} H^{1,6}$ norm was bounded by the Strichartz $S$ for small $t$ and by the decay \eqref{v decay} for large $t$. 
Hence if we restrict on $t\in(T,\I)$, then we get additional factor $T^{-1/4}$. 

Similarly, we have cubic terms like
\EQ{ \label{C phaseD} 
 \na_\x \int_0^t\iint_{\x=\x_1+\x_2+\x_3} e^{is\Om}C_j(\x_1,\x_2,\x_3)\Jp{v}(s,\x_1)\Jp{v}(s,\x_2)\Jp{v}(s,\x_3) d\x_1 d\x_2 ds,}
for $1\le j\le 4$, where $\Om=H(\x)\mp H(\x_1)\mp H(\x_2)\mp H(\x_3)$. If $\na_\x$ hits $C_j$, its contribution is bounded in the same way as before. 
If $\na_\x$ hits $\Jp{v}$, then its contribution is estimated by using the Strichartz and the bilinear estimate \eqref{bil nonsing}: 
\EQ{ \label{JC}
 \pn\|C_j[v^\pm,v^\pm,(J v)^\pm]\|_{L^2_t H^{1,6/5}_x}
 \pt\lec \|J v\|_{L^\I H^1} \|U^{-1}v\|_{L^2 H^{1,6}} \|U^{-1}v\|_{L^\I L^6}
 \prq+ \|U^{-1}J v\|_{L^\I L^6} \|U^{-1}v\|_{L^\I H^1} \|v\|_{L^2 H^{1,6}}
 \pr\lec \|v\|_{X\cap S}^3,}
where 
if the derivative in the $H^{1,6/5}_x$ norm lands on $Jv$ (with large frequency), it is dominated by the first term on the right, 
otherwise we use the second term. 
Refer to \eqref{U-1vL6} for the $L^2 H^{1,6}$ norm on $U^{-1}v$. 
For large $t>T$, we gain at least $T^{-1/2}$. 

For $C_5$, we have just to replace the last $\Jp{v}$ with $\Jp{\zn}$ in \eqref{C phaseD}, hence the final bound in \eqref{JC} is replaced by $\|v\|_{X\cap S}^2\|\zn\|_{X\cap S}$. 

The quartic terms $Q_j(u)$ with $j=1,2$ are regular. So we write in the physical space 
\EQ{
 J\int_0^t e^{-i(t-s)H}Q_j(u)ds
 =\int_0^t e^{-i(t-s)H}(x-s\na H(\x))Q_j(u) ds.}
Then by Strichartz, their contribution in $L^\I H^1$ is bounded by
\EQ{ \label{JN1,4}
  \|x Q_j(u)\|_{L^{2} H^{1,6/5}}
 + \|t Q_j(u)\|_{L^2 H^{2,6/5}}.}
For the first term we need to estimate $xu$. Since $Ju=xu-t(\na_\x H) u$, we have by using \eqref{com Ju},  
\EQ{
 \pt\|xu\|_{L^6_x+tL^2_x}
  \lec \|Ju\|_{\dot H^1_x} + \|(\na_\x H) u\|_{L^2_x}
  \lec \|v(t)\|_{X(t)}.}
Then we have 
\EQ{
 \|xQ_j(u)\|_{L^2 H^{1,6/5}}
 \pt\lec \|u\|_{L^\I L^6} \|u\|_{L^2 H^{1,6}} \|u\|_{L^\I L^3} \|u\|_{L^\I L^6}
 \prq + \|xu\|_{L^\I(L^6+tL^2)} \|\LR{t}^{1/2}u\|_{L^\I L^6}^2 \|u\|_{L^\I L^3\cap L^2 L^\I}
 \pr\lec \|v\|_{X\cap S}^4,}
and 
\EQ{
 \|tQ_j(u)\|_{L^2 H^{2,6/5}} 
  \lec \|t^{1/2}u\|_{L^\I L^6}^2 \|u\|_{L^2 L^6} \|u\|_{L^\I L^3}
 \lec \|v\|_{X\cap S}^4.}
For $t>T$, we gain at least $T^{-1/4}$ from the decay of $u$ in $L^\I L^6\cap L^\I L^3\cap L^2 L^\I$ by \eqref{v decay}. 

Thus it remains only to estimate the terms with the derivative $\na_\x$ landing on the phase $e^{is\Om}$ in $B_j$ and $C_j$. 
 

\section{Estimates with phase derivative in bilinear terms}
This and the next sections make the heart of this paper. 
If the derivative lands on the phase in the bilinear terms, then the pointwise decay as above is at best $t^{-1}L^3_x$, which is far from sufficient. 
So we should exploit the non-resonance property, through integration by parts in $\y$ and $s$.
We decompose $B_j$ with $j=3,4$ smoothly into two parts:
\EQ{ \label{ST reson decop}
 B_j = B_j^X + B_j^T,}
where $B_j^X$ is supported in $(\x,\y)$ where the interaction is spatially non-resonant, and $B_j^T$ in the temporally non-resonant region. 
Here non-resonance means simply that either $\y$ or $s$ derivative of $e^{is\Om}$ does not vanish. 

We integrate the phase in $\y$ for $B_j^X$ and in $s$ for $B_j^T$. The strict intersection of the spatially resonant and temporally resonant regions is only at $\x=0$, which can be compensated by the decay at $\x=0$ of the bilinear forms. 
This is the reason why the above decomposition is possible. 

The same type of argument has been used in \cite{vac2} for the 2D final data problem. The main difference from there is that now we need $L^p$-type bilinear estimates to close our argument for the initial data problem, 
and that we are not free to integrate by parts because the entry functions are the unknown solutions, whereas in the final data problem they were the given asymptotic profile. 

After the integration by parts, we get multipliers whose {\it derivatives} have much stronger singularity than allowed in the standard $L^p$ multiplier estimate, even if it were linear.  
The singularity is due to the behavior of $H(\x)$ around $\x\to 0$, which is close to the wave equation and thus enhancing resonance between the parallel interactions. 
Roughly speaking, the singularity increases for each derivative twice
as fast as for the standard multipliers, even with the best decomposition of $B^X+B^T$. 

To overcome this difficulty, we take advantage of the room in decay in the 3D case to reduce differentiation of the symbol, employing a bilinear estimate with loss in the H\"older exponent. 
It turns out that there is a narrow balance between the singularity and the loss in $L^p$, such that we can close all the estimates. 

Specifically, we use only $1/2+\e$ or $1+\e$ derivatives for small frequencies, while the standard $L^p$ estimates need at least $3/2+\e$ derivatives of the symbol. 
Moreover, it will be important for us to exploit the smallness of region where the singularity is the strongest.  

The plan of these two sections is as follows. 
First we prove the bilinear estimate allowing some singularities. 
Secondly we carry out the estimates on all the bilinear terms, assuming some bounds on the bilinear Fourier multipliers with divisors. 
Finally in the next section, we prove those bounds for each multiplier, using geometric properties of the resonance sets. 

\subsection{Singular bilinear multiplier with Strichartz}
We introduce mixed (semi-) norms $\L^p \dot B^s_{q,r}$ for the symbols by
\EQ{ \label{def LB}
 \|f(\x,\y)\|_{\L^p \dot B^s_{q,r}}
 := \|j^s \chi^j(\na)_\y f(\x,\y)\|_{\ell_j^r L^p_\x L^q_\y(2^\Z\times\R^d\times\R^d)},}
where $\chi^j$ is as in \eqref{LP decop}. 
\begin{lemma} \label{sbil}
Let $0\le s\le d/2$, and $(p,q)$ be any dual Strichartz exponent except for the endpoint, namely
\EQ{
  1\le p<2,\pq 1< q\le 2,\pq \frac{2}{p} + \frac{d}{q} = 2 + \frac{d}{2}.}
Let $(p_1,q_1)$ and $(p_2,q_2)$ satisfy 
\EQ{
 \pt \frac{1}{p_1}+\frac{1}{p_2}=\frac{1}{p},
 \pq \frac{1}{q_1}+\frac{1}{q_2}=\frac{1}{q} + \frac{1}{q(s)},
 \pq \frac{1}{q(s)}:=\frac{1}{2}-\frac{s}{d}, 
 \pr p\le p_1,p_2\le\I, \pq q\le q_1,q_2\le\I.}
Then for any bilinear Fourier multiplier $B$ we have
\EQ{
 \pt\left\|\int e^{itH}B[u(t),v(t)]dt\right\|_{L^2_x}
 \pn\lec \|B\|_{\L^\I_{\x}\dot B^s_{2,1,\y}+\L^\I_{\x}\dot B^s_{2,1,\z}}
  \|u\|_{L^{p_1}_tL^{q_1}_x}\|v\|_{L^{p_2}_tL^{q_2}_x}.}
where the first norm of $B$ is in the $(\x,\y)$ coordinates and the second in $(\x,\z)=(\x,\x-\y)$. 
\end{lemma}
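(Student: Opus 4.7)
The plan is to reduce the claim to a purely spatial bilinear estimate via duality and Strichartz, then handle that estimate piece-by-piece after a Littlewood-Paley decomposition of $B$. More precisely, by duality
\[
\left\|\int e^{itH}B[u,v]\,dt\right\|_{L^2_x}=\sup_{\|\phi\|_{L^2_x}=1}\left|\int\langle B[u,v],e^{-itH}\phi\rangle\,dt\right|,
\]
and the non-endpoint Strichartz bound $\|e^{-itH}\phi\|_{L^{p'}_tL^{q'}_x}\lec\|\phi\|_{L^2}$ together with H\"older in spacetime would reduce the lemma to the space-time bilinear estimate $\|B[u,v]\|_{L^p_tL^q_x}\lec M\|u\|_{L^{p_1}_tL^{q_1}_x}\|v\|_{L^{p_2}_tL^{q_2}_x}$. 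H\"older in $t$ alone (using $1/p_1+1/p_2=1/p$) then further reduces the claim to the purely spatial bilinear bound
\[
\|B[f,g]\|_{L^q_x}\lec M\|f\|_{L^{q_1}_x}\|g\|_{L^{q_2}_x},\pq \frac{1}{q_1}+\frac{1}{q_2}=\frac{1}{q}+\frac{1}{q(s)}.
\]

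By symmetry between the $\y$ and $\z$ coordinates, I would split $B=B_Y+B_Z$ according to the two terms of the norm and treat $B_Y\in\L^\I_\x\dot B^s_{2,1,\y}$ (the other is handled identically). Littlewood-Paley decompose $B_Y=\sum_{j\in 2^\Z}B_Y^j$ with $B_Y^j=\chi^j(\na_\y)B_Y$ supported in $|\y|\sim j$, and set $A_j:=\|B_Y^j\|_{\L^\I_\x L^2_\y}$ so that $\sum_j j^s A_j\lec\|B_Y\|_{\L^\I_\x\dot B^s_{2,1,\y}}$. Since $B_Y^j[f,g]=B_Y^j[P_j f,g]$, it suffices to prove the single-piece bound
\[
\|B_Y^j[f,g]\|_{L^q_x}\lec A_j\,j^s\|f\|_{L^{q_1}_x}\|g\|_{L^{q_2}_x}
\]
and sum.

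For the per-dyadic estimate, pass to the Fourier side: Cauchy–Schwarz in the $\y$ integral gives the pointwise control
\[
|(B_Y^j[f,g])^\wedge(\x)|\le A_j\bigl(|\widetilde{P_jf}|^2*|\tilde g|^2\bigr)^{1/2}(\x),
\]
and after dualizing against $h\in L^{q'}_x$ the right-hand side is handled by Young's convolution inequality together with Hausdorff–Young to return to the physical-side $L^{q_1}$ and $L^{q_2}$ norms. The crucial point is that the $|\y|\sim j$ frequency support of $\widetilde{P_jf}$ supplies a Bernstein-type improvement: when the Young exponent on the $f$-factor falls in the range $2r_1<2$, one estimates $\|\widetilde{P_jf}\|_{L^{2r_1}}$ directly by $j^{d(1/(2r_1)-1/2)}\|P_jf\|_{L^2}$ rather than through Hausdorff–Young, and a further Bernstein step converts $\|P_jf\|_{L^2}$ into $\|f\|_{L^{q_1}}$. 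The scaling relation $1/q_1+1/q_2=1/q+1/q(s)$ is exactly what makes these two dyadic gains combine into precisely the factor $j^s$ demanded by the Besov summation.

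The hard part will be the exponent bookkeeping. A naive use of Cauchy–Schwarz + Young + Hausdorff–Young alone only yields the $s=0$ endpoint $1/q_1+1/q_2=1/q+1/2$; the refinement $-s/d$ in the H\"older sum must be extracted by entering the regime where the annular $\y$-support of $\widetilde{P_jf}$ (rather than Hausdorff–Young by itself) drives the estimate. Verifying that the constraints $q\le q_1,q_2\le\I$ keep all Young/Hausdorff–Young/Bernstein exponents in their valid ranges, and that the non-endpoint condition on $(p,q)$ gives enough room in the dual Strichartz pair $(p',q')$, is exactly what makes the scheme close.
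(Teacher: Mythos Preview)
Your reduction to a purely spatial bilinear estimate is where the argument breaks. After the duality--Strichartz step you would need
\[
\|B[f,g]\|_{L^q_x}\lec\|B\|_{[B^s]}\,\|f\|_{L^{q_1}_x}\|g\|_{L^{q_2}_x}
\]
for the full dual Strichartz range $1<q\le 2$, and your Fourier-side Cauchy--Schwarz only delivers a pointwise bound on $\widehat{B_Y^j[f,g]}(\x)$. That bound converts to an $L^2_x$ estimate via Plancherel, but for $q<2$ there is no Hausdorff--Young in the direction you need, and dualizing against $h\in L^{q'}_x$ with $q'>2$ gives no control on $\hat h$ whatsoever. Worse, the spatial estimate you want appears to be genuinely false for $q\ne 2$: in the $(\x,\y)$ coordinates take $B=c(\x)b(\y)$ with $b$ fixed and $\hat b$ supported in a single dyadic annulus, so that $\|B\|_{[B^0]}\sim\|c\|_{L^\infty}\|b\|_{L^2}$ while $B[f,g]=c(-i\na)\bigl[(\check b*f)\,g\bigr]$; since a bounded symbol $c$ need not give an $L^q$-bounded Fourier multiplier for $q\ne 2$ (e.g.\ the ball multiplier in $d\ge 2$), no inequality of this form can hold with the stated symbol norm. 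Thus the only case of your scheme that closes is $q=2$, which forces $p=1$ and collapses the lemma to the triangle inequality in $t$ plus Corollary~\ref{cor:sbil}, losing precisely the Strichartz gain that the lemma is designed to capture. The paper in fact remarks explicitly that this decoupling seems not to be available.

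The paper's proof avoids the issue by never leaving $L^2_x$: it computes $\|\int e^{itH}B[u,v]\,dt\|_{L^2_x}$ directly as an $L^2_\x$ norm. After the dyadic decomposition $B=\sum_n B_n$ in the $\y$-frequency, one partitions $u$ and $v$ in \emph{physical} $x$ into cubes of side $\sim n$ and integrates the resulting oscillatory factor $e^{-in(j-k)\cdot\y}$ by parts in $\y$; H\"older in $(\x,\y)$ then peels off $\|B_n\|_{L^\infty_\x L^2_\y}$, and the remaining $L^2_{\x,\y}$ factor is, after Plancherel in both variables, handled by the $TT^*$ proof of Strichartz in $x$ at each fixed $y$, followed by Young in $y$ against the physical localization to manufacture the factor $n^s$. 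The point is that the $L^2_\x$ on the output and the $L^2_\y$ in the symbol norm are exploited \emph{simultaneously} through Plancherel in the product variable $(\x,\y)$; once you pass to $L^q_x$ with $q<2$, that mechanism is gone.
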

\begin{remark}
$q(s)$ is the Sobolev exponent for the embedding $\dot B^s_{2,1}\subset L^{q(s)}$, 
and $1/q(s)$ gives the precise loss compared with the H\"older inequality. 
\end{remark}
The above estimate is a sort of composition of the Strichartz and the bilinear Fourier multiplier. Indeed, by choosing $u(t,x)=f_n(t)\fy(x)$ and $v(t,x)=\psi(x)$ with $\|f_n\|_{L^1}\le 1$ and $f_n\to \de \in \S'(\R)$, we can deduce 
\begin{corollary} \label{cor:sbil}
Let $0\le s\le d/2$, and let $(q_1,q_2)$ satisfy 
\EQ{
 \pt \frac{1}{q_1}+\frac{1}{q_2}=\frac{1}{2} + \frac{1}{q(s)},
 \pq 2\le q_1,q_2\le q(s),
 \pq \frac{1}{q(s)}:=\frac{1}{2}-\frac{s}{d}.}
Then for any bilinear Fourier multiplier $B$ we have
\EQ{
 \|B[\fy,\psi]\|_{L^2_x}
 \lec \|B\|_{\L^\I_{\x}\dot B^s_{2,1,\y}+\L^\I_{\x}\dot B^s_{2,1,\z}}\|\fy\|_{L^{q_1}_x}\|\psi\|_{L^{q_2}_x}.}
\end{corollary}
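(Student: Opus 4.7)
The plan is to deduce Corollary~\ref{cor:sbil} from Lemma~\ref{sbil} along the lines already hinted at in the text, by choosing time-dependent entries of the Lemma that concentrate in $t$. Specifically, I would take $v(t,x)=\psi(x)$ independent of $t$ and $u(t,x)=f_n(t)\fy(x)$, where $f_n\in C_c^\I(\R)$ is an approximate identity with $\|f_n\|_{L^1_t}=1$ and $f_n\to\de$ in $\S'(\R)$.

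First I would verify that the exponent hypotheses of the Lemma can be matched. Take $(p,q)=(1,2)$: then $2/p+d/q=2+d/2$ holds and the pair is admissible, since the excluded endpoint of Lemma~\ref{sbil} is $p=2$. The splitting $1/p_1+1/p_2=1/p=1$ is realized by $(p_1,p_2)=(1,\I)$, while $1/q_1+1/q_2=1/q+1/q(s)=1/2+1/q(s)$ with $2\le q_1,q_2\le\I$ translates exactly into the hypothesis on $(q_1,q_2)$ in the Corollary (the upper bound $q_i\le q(s)$ is forced by the complementary exponent being at least $2$).

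With these choices, $\|u\|_{L^1_t L^{q_1}_x}=\|f_n\|_{L^1_t}\|\fy\|_{L^{q_1}_x}=\|\fy\|_{L^{q_1}_x}$ and $\|v\|_{L^\I_t L^{q_2}_x}=\|\psi\|_{L^{q_2}_x}$, while the bilinearity of $B$ yields
\[
\int_\R e^{itH}B[u(t),v(t)]\,dt=\int_\R f_n(t)\,e^{itH}B[\fy,\psi]\,dt=m_n(H)B[\fy,\psi],
\]
where $m_n(\la):=\int_\R e^{it\la}f_n(t)\,dt$ satisfies $|m_n|\le 1$ uniformly and $m_n(\la)\to 1$ pointwise as $n\to\I$. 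Lemma~\ref{sbil} therefore gives
\[
\|m_n(H)B[\fy,\psi]\|_{L^2_x}\lec\|B\|_{\L^\I_{\x}\dot B^s_{2,1,\y}+\L^\I_{\x}\dot B^s_{2,1,\z}}\|\fy\|_{L^{q_1}_x}\|\psi\|_{L^{q_2}_x}.
\]

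The last step is to pass to the limit $n\to\I$. Testing first on $\fy,\psi$ in the Schwartz class, where $B[\fy,\psi]\in L^2_x$ is automatic, dominated convergence on the Fourier side yields $m_n(H)B[\fy,\psi]\to B[\fy,\psi]$ in $L^2_x$, and the inequality extends by density since the admissible range $2\le q_i\le q(s)<\I$ makes $\S$ dense in $L^{q_i}$. A cleaner alternative is to invoke Fatou's lemma in Fourier variables to obtain $\|B[\fy,\psi]\|_{L^2_x}\le\liminf_n\|m_n(H)B[\fy,\psi]\|_{L^2_x}$ directly, bypassing a separate density step. I do not anticipate any real obstacle here: the only conceptual verification is that $(p,q)=(1,2)$ is an admissible non-endpoint dual Strichartz pair, after which everything reduces to a routine limiting procedure.
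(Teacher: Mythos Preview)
Your proposal is correct and follows exactly the approach the paper indicates just before the corollary: apply Lemma~\ref{sbil} with the non-endpoint dual Strichartz pair $(p,q)=(1,2)$, $(p_1,p_2)=(1,\infty)$, entries $u(t,x)=f_n(t)\fy(x)$, $v(t,x)=\psi(x)$ with $\|f_n\|_{L^1}\le 1$ and $f_n\to\de$, then let $n\to\infty$. Your verification of the exponent constraints and the limiting step via Fatou (or dominated convergence on the Fourier side for Schwartz data, then density) is precisely the intended argument.
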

Practically, the above norm on $B$ can be estimated by the interpolation
\EQ{ \label{interpol for B}
 \pt\|B\|_{\L^\I \dot B^s_{2,1}}
  \lec \|B\|_{L^\I \dot H^{s_0}}^{1-\th} \|B\|_{L^\I \dot H^{s_1}}^{\th},}
for $s=(1-\th)s_0 + \th s_1$, with $s_0\not=s_1$ and $\th\in (0,1)$. 
For intermediate exponents, we can instead apply multilinear real interpolation to the above estimates, replacing $\L^\I \dot B^s_{2,1}$ by $L^\I \dot B^s_{2,\I}$ for free.
But this does not work in the boundary cases, and indeed the estimates would become false, for example when $s=0$ or $s=d/2$, which will be used later. 
We abbreviate those norms by
\EQ{ \label{def HB}
 \pt [H^s] := L^\I_\x \dot H^s_\y + L^\I_\x \dot H^s_\z,
 \pq [B^s] := \L^\I_\x \dot B^s_{2,1,\y} + \L^\I_\x \dot B^s_{2,1,\z}.}

The proof given below essentially contains one for the corollary, 
but it seems difficult to decouple the lemma into the Strichartz and a time independent estimate. 
On the other hand, the corollary just barely fails to be sufficient for our application. In fact, more use of the Strichartz simply causes more loss in decay, compared with the weighted decay estimate. 
That is why we do not pursue the endpoint Strichartz, which corresponds to the worst case. 
Nevertheless, we prefer the flexibility in the space-time H\"older exponents provided by Strichartz. 

\begin{proof}[Proof of Lemma \ref{sbil}] 
By symmetry between $\y$ and $\z$, it suffices to prove for the norm in the $(\x,\y)$ coordinates.  
In the case $s=0$, we get the above just by Plancherel.  
For $s>0$, we want to integrate by parts in the Fourier space. 
Since the Fourier transform of $u$ and $v$ themselves are not smooth, we will use the cubic decomposition in $x$. 
The Fourier transform of the bilinear operator is given by
\EQ{
 X := \int B f(\x,\y) d\y, 
 \pq f(\x,\y) \pt:= \int e^{itH(\x)} \ti u(t,\y)\ti v(t,\x-\y) dt 
 \pr= \F_x^\x \F_y^\y \int e^{itH_x}[u(t,x+y)v(t,x)] dt.}
First we dyadically decompose $B$ in the frequency for $\y$:
\EQ{
 \pt X = \sum_{n\in 2^\Z} X_n,
 \pq X_n := \int B_n f d\y,
 \pq B_n := \chi^n(\na)_\y B(\y,\x-\y).}
By the definition of our norm on $B$, it suffices to bound each $X_n$ uniformly. 
Next, we decompose $u(t,x)$ and $v(t,x)$ into cubes of size $n$ in $x$. 
Fix $c\in C^\I_0(\R^d)$ such that for all $x\in\R^d$ we have  
\EQ{
 \sum_{j\in \Z^d} c(x-j) = 1.}
Then for any $\fy\in\S'(\R^d)$ we have the following decomposition 
\EQ{
 \pt \ti\fy(\x) = \sum_{j\in\Z^d} e^{-inj\x}\hat\fy_j^n(\x),
 \pq \fy_j^n(x):=c(x/n-j)\fy(x),
 \pr \hat\fy_j^n(\x):=e^{inj\x}\ti\fy_j^n(\x)
 = \F_x^\x [c(x/n)\fy(x+nj)],}
where $\fy_j^n$ is supported on $\{|x/n-j|\lec 1\}$. 
Applying this decomposition to both $u$ and $v$, we get
\EQ{
 \pt X_n = \sum_{j,k\in\Z^d} \int B_n f_{j,k}^n(\x,\y) d\y,
 \pr f_{j,k}^n := \int e^{itH(\x)} e^{-inj\y-ink(\x-\y)}\hat u_j^n(t,\y)\hat v_k^n(t,\x-\y) dt.}
If $j\not=k$, then we choose $a\in\{1,\dots,d\}$ such that $|(j-k)_a|\sim|j-k|$ and integrate the phase $e^{-in(j-k)\y}$ in the variable $\y_a$ in $K$ times for arbitrary $K\in\N$. Then we get 
\EQ{
 \int B_n f_{j,k}^n d\y = \sum_{\al+\be+\ga=K} \pt[in(j-k)_a]^{-K} \iint e^{itH(\x)} e^{-inj\y-ink(\x-\y)}
 \pmt \p_{\y_a}^\al B_n \,\p_a^\be \hat u_j^n(t,\y)\,(-\p_a)^\ga \hat v_k^n(t,\x-\y) dt d\y.}
Thus denoting 
\EQ{ \label{def P**}
 \fy_{a,j}^{\ga,n} := \F^{-1}_\x e^{-inj\x} \p_{a}^\ga \hat\fy_j^n(\x) = n^\ga[-i(x/n-j)_a]^\ga c(x/n-j)\fy(x),}
we have 
\EQ{
 \pt X_n = \sum_{a=0}^d \sum_{\al=0}^K \int [\p_{\y_a}^\al B_n] G_{a,\al}^n(\x,\y) d\y,
 \pr G_{a,\al}^n(\x,\y) := \F_x^\x \F_y^\y \int e^{itH_x} F_{a,\al}^n(t,x,y) dt,
 \pr F_{0,0}^n := \sum_{j\in\Z^d} u_j(t,x+y) v_j(t,x),\pq F_{0,\al}^n := 0 \pq(\al>0),
 \pr F_{a,\al}^n(x,y) := \sum_{j,k} \sum_{\be+\ga=K-\al} \frac{(-1)^\ga}{[in(j-k)_a]^K} u_{a,j}^{\be,n} (t,x+y) v_{a,k}^{\ga,n}(t,x),}
where in the last term, the summation for $(j,k)$ is constrained to the set  $\{(j-k)_a\sim|j-k|\}$, which should be chosen to partition $\{j\not=k\}$ by $a=1,\dots,d$. 

Applying H\"older in $(\x,\y)$, we have 
\EQ{
 \|X_n\|_{L^2_\x}
  \lec \sum_{a=0}^d \sum_{\al=0}^K \|\p_{\y_a}^\al B_n\|_{L^\I_\x L^2_\y}
   \|G_{a,\al}^n\|_{L^2_{\x,\y}.}}
To estimate the $L^2_{\x,\y}$ norm, we recall the standard proof of the (non-endpoint) Strichartz estimate from the decay of $e^{itH}$. 
After using Plancherel in $(\x,\y)$, we get 
\EQ{
 \|G_{a,\al}^n\|_{L^2_{\x,\y}}^2
 = \iiint \PX{e^{i(t-t')H_x}F_{a,\al}^n(t,x,y)}{F_{a,\al}^n(t',x,y)} dy dt dt',}
and the $L^2_x$ inner product is bounded by using the $L^p$ decay of $e^{itH}$,
\EQ{
 \lec |t-t'|^{-\s} \|F_{a,\al}^n(t,x,y)\|_{L^q_x} \|F_{a,\al}^n(t',x,y)\|_{L^q_x},}
where $1\le q\le 2$ and $\s = d(1/q-1/2)$. 
Applying the Schwarz inequality to $dy$, and Hardy-Littlewood-Sobolev to $dt'dt$, we get
\EQ{
 \|G_{a,\al}^n\|_{L^2_{\x,\y}}^2
 \lec \|F_{a,\al}^n(t,x,y)\|^2_{L^p_t L^2_y L^q_x},}
for any dual Strichartz exponent $(p,q)$, except for the endpoint, which we do not consider for simplicity. 
By the support property of $u_{a,j}^{\be,n}$ and $v_{a,k}^{\ga,n}$ \eqref{def P**}, we have, noting that $|y|\gec n$ when $\al>0$, 
\EQ{
 \|F_{a,\al}^n(t,x,y)\|_{L^p_t L^2_y L^q_x}
 \pt\lec n^{-\al}\|\min(1,|y/n|^{-K})u(t,x+y)v(t,x)\|_{L^p_t L^2_y L^q_x}.}
On the other hand, by the Fourier support property of $B_n$, we have
\EQ{
 \|\p_{\y_a}^\al B_n\|_{L^\I_\x L^2_\y}
 \lec n^{\al-s}\|B_n\|_{L^\I_\x \dot H^s_\y}.}
Putting them together, we obtain 
\EQ{
 \|X_n\|_{L^2_\x}
 \lec \|B_n\|_{L^\I_\x \dot H^s_\y} n^{-s}\|\LR{y/n}^{-K}u(x+y)v(x)\|_{L^p_t L^2_y L^q_x}}
for any $K\in\N$. 
By using Young's inequality, we have for sufficiently large $K$, 
\EQ{
 \|\LR{y/n}^{-K}\fy(x+y)\psi(x)\|_{L^2_yL^q_x}^q
 \pt=\|\LR{y/n}^{-Kq}(|\fy|^q*|\psi|^q)(y)\|_{L^{2/q}_y}
 \pr\lec \|\LR{y/n}^{-Kq}\|_{L^{d/(sq)}_y} \||\fy|^q\|_{L^{q_1/q}} \||\psi|^q\|_{L^{q_2/q}}
 \pr\sim n^{sq}\|\fy\|_{L^{q_1}}^q \|\psi\|_{L^{q_2}}^q.}
Applying this to the above at each $t$, then using H\"older in $t$, and summing for all $n\in 2^\Z$, we get the desired estimate. 
\end{proof}


\subsection{Spatial integration of phase}
First we consider the spatially non-resonant part $B_j^X$. 
More precisely, we first decompose $B_j$ dyadically by using \eqref{LP decop}, 
\EQ{ \label{decop BB}
 B_j^{a,b,c} := \chi^a(\x)\chi^b(\y)\chi^c(\z) B_j = B_j^{a,b,c,X} + B_j^{a,b,c,T},}
such that $|\x|\sim a$, $|\y|\sim b$ and $|\z|\sim c$ in the support of $B_j^{a,b,c}$. The sum over $a,b,c\in 2^\Z$ can be restricted to 
\EQ{ \label{dyprod}
 a \lec b \sim c,\pq b\lec c\sim a, \pq c\lec a\sim b.}
The smooth decomposition into $B_j^{a,b,c,X}$ and $B_j^{a,b,c,T}$ will be given in Section \ref{s:multest}. 
For each $B_j^{a,b,c,X}$, we integrate the phase in $\y$, by using the identity
\EQ{
 e^{is\Om} = \frac{\na_\y\Om}{is|\na_\y\Om|^2}\cdot\na_\y e^{is\Om}.}
Then we get terms like
\EQ{ \label{phase1}
 \pt\F^{-1}\int_0^t ds \int  e^{is\Om} \left\{\BB_1\cdot\na_\y[\Jp{\zn}(\y) \Jp{\zn}(\x-\y)] 
   + \BB_2 \Jp{\zn}(\y) \Jp{\zn}(\x-\y) \right\}d\y
 \pr= \int_0^t e^{isH}\left\{\BB_1[(J\zn)^\pm,\zn^\pm] - \BB_1[\zn^\pm, (J\zn)^\pm] + \BB_2[\zn^\pm,\zn^\pm]\right\} ds,}
where $\Om=H(\x)\mp H(\y)\mp H(\x-\y)$ and $\BB_k=\BB_{k,j}^{a,b,c}$ is defined by 
\EQ{
 \pt \BB_{1,j}^{a,b,c} := \frac{\naxy\Om\cdot\na_\y\Om}{|\na_\y\Om|^2}B_j^{a,b,c,X},
 \pq \BB_{2,j}^{a,b,c} := \na_\y\cdot\frac{\na_\y\Om\cdot \naxy\Om\cdot B_j^{a,b,c,X}}{|\na_\y\Om|^2}.}
Denoting 
\EQ{
 \pt M:=\max(a,b,c),\pq m:=\min(a,b,c),\pq l:=\min(b,c),}
we assume that if $M\ll 1$ then 
\EQ{ \label{B^X bound1}
 \pt \|\BB_1^{a,b,c}\|_{[H^{1+\e}]} \lec l^{1/2-2\e}, 
 \pq \|\BB_2^{a,b,c}\|_{[H^{1+\e}]} \lec l^{1/2-2\e}M^{-1},}
for $\e>0$ small, and if $M\gec 1$ then 
\EQ{ \label{B^X bound2}
 \pt \|\BB_1^{a,b,c}\|_{[H^{3/2+\e}]} \lec l^{1-\e}\LR{a}^{-1}+m^{-\e},
 \pq \|\BB_2^{a,b,c}\|_{[H^{3/2+\e}]} \lec l^{-\e}\LR{a}^{-1} + m^{-\e},}
for $|\e|$ small. 
Note that we have a derivative loss for the first term if $\LR{a}\ll b\sim c$. This is the reason for our new normal form (cf. the discussion in Section \ref{s:normal}). 
For the definition of those norms, see \eqref{def LB} and \eqref{def HB}. 
The above estimates will be proved in Section \ref{s:multest}. 
Then applying Lemma \ref{sbil} with some fixed $s=1+\e>1$ and $3/2$, we get 
\EQ{
 \pt\left\|\sum_{a,b,c} \int e^{isH}\{\BB_1[(J\zn)^\pm,\zn^\pm] \pm \BB_1[\zn^\pm,(J\zn)^\pm]\} ds\right\|_{H^1_x}
 \pr\lec \|\sum_{M\ll 1}m^{\e/2}\BB_1^{a,b,c}\|_{[B^{1+\e}]} 
 \pn\|U^{-\e/2}J\zn\|_{L^\I_t L^{1/2-\e/6}_x} \|U^{-1/6}\zn\|_{L^{1-\e/4}_t L^6_x}
 \prq+ \|\sum_{M\sgec 1}U(m)^{\e/2}\LR{a}\LR{b}^{-1}\LR{c}^{-1}\BB_1^{a,b,c}\|_{[B^{3/2}]}
 \pmt\|U^{-\e/2}\LR{\na}J\zn\|_{L^\I_t L^{1/2-\e/6}_x} \|U^{-1/6}\LR{\na}\zn\|_{L^{3/4+\e/4}_t L^6_x},}
where the sums are for $a,b,c$ satisfying \eqref{dyprod}, and we used the convention $L^{1/p}=L^p$. The norms on $\BB_1^{a,b,c}$ are bounded by the above assumption and interpolation as follows. For the first case $M\ll 1$, we have 
\EQ{
 \|\sum_{M\ll 1}m^{\e/2}\BB_1^{a,b,c}\|_{[H^{1+\e'}]}\lec \sum_{l\ll 1}m^{\e/2}l^{1/2-2\e'} \lec 1,} 
for $\e'>0$ small, so we can replace the norm with $[B^{1+\e}]$ by the interpolation \eqref{interpol for B} for varying $\e'$. For the second case $M\gec 1$ we have 
\EQ{
 \pt\|\sum_{M\sgec 1}U(m)^{\e/2}\LR{a}\LR{b}^{-1}\LR{c}^{-1}\BB_1^{a,b,c}\|_{[H^{3/2+\e'}]}
 \pn\lec \sum_{l\in 2^\Z} U(m)^\e \LR{l}^{-\e'-1} \lec 1,}
for $|\e'|$ small. Then we can change the norm to $[B^{3/2}]$ by the interpolation \eqref{interpol for B}. 

The norm on $J\zn$ is bounded by the Sobolev embedding 
\EQ{
 \|U^{-\e/2}\LR{\na}J\zn\|_{L^\I_t L^{1/2-\e/6}_x} \lec \|J\zn\|_{L^\I_t H^1_x},}
and the norms on $\zn$ are bounded by splitting into $|t|<1$ and $|t|>1$, 
\EQ{
 \pt \|U^{-1/6}\zn\|_{L^{1-\e/4}_t L^6_x} + \|U^{-1/6}\LR{\na} \zn\|_{L^{3/4+\e/4}_t L^6_x}
 \pr\lec \|U^{-1/6}\zn\|_{L^2_t H^{1,6}_x}
 + \|tU^{-1/3}\zn\|_{L^\I_t H^{1,6}_x}
 \lec \|\zn\|_{S\cap X}.}
When restricted for large $t>T$, the above is bounded by $T^{-\e/4}$. 

In the same way, we estimate the term with $\BB_2$ by
\EQ{
 \pt\left\|\int_0^t e^{isH}\BB_2[\zn^\pm,\zn^\pm] ds\right\|_{H^1_x}
 \pr\lec \|\sum_{a,b,c}U(l)U(M)^{1/6}\LR{a}\LR{b}^{-1}\LR{c}^{-1}\BB_2^{a,b,c}\|_{[B^{1+\e}]+[B^{3/2}]}
 \pmt
   \|U^{-1}\zn\|_{L^\I_t H^1_x} \|U^{-1/6}\zn\|_{(L^{1-\e/2}\cap L^{4/3})_t H^{1,6}_x},}
where the $L^2_x$ norm is applied to the $l$ frequency component, while the $L^6_x$ norm is applied to the other with frequency $\max(b,c)\sim M$. We use $[B^{1+\e}]$ for $M\le 1$ with $L^{1-\e/2}_t L^6_x$, and $[B^{3/2}]$ for $M\ge 1$ with $L^{4/3}_t L^6_x$. 
Then the norm on $\BB_2$ is estimated by
\EQ{
 \pt \|\sum_{M<1}\cdots\BB_2^{a,b,c}\|_{[H^{1+\e'}]}
 \lec \sum_{l\le 1} l^{3/2-2\e'}M^{-5/6} \lec 1,
 \pr \|\sum_{M\ge 1}\cdots\BB_2^{a,b,c}\|_{[H^{3/2+\e''}]}
 \lec \sum_{l\in 2^\Z} U(l) \LR{l}^{-\e''-1} \lec 1,}
for $\e'>0$ small and $|\e''|$ small. Then we can replace the norms with $[B^{1+\e}]$ and $[B^{3/2}]$ respectively, by the interpolation \eqref{interpol for B}. 

The norms on $\zn$ are treated in the same way as for $\BB_1$. 
For large $t>T$, we have additional decay factor $T^{-\e/2}$.

\subsection{Time integration of phase}
For the temporally non-resonant part $B_j^{a,b,c,T}$, we integrate the phase in $s$. Then we get the time derivative of the bilinear functions, for which we use the equation again. Thus we get terms like 
\EQ{ \label{integ in s}
 \pt \int_0^t e^{isH}\left\{\BB_3[\zn^\pm,\zn^\pm]+\BB_3[s\NN^\pm,\zn^\pm] + \BB_3[\zn^\pm,s\NN^\pm]\right\}ds
 \pr + \left[e^{isH}\BB_3[s\zn^\pm,\zn^\pm]\right]_{s=0}^t,}
where we put
\EQ{
 \pt \BB_3 = \BB_{3,j}^{a,b,c} := \frac{\na_\x\Om}{\Om}B_j^{a,b,c,T},}
with $j=3,4$. Recall $|\x|\sim a$, $|\y|\sim b$, $|\z|\sim c$, $M:=\max(a,b,c)$,  $m=\min(a,b,c)$ and $l=\min(b,c)$. We assume that 
\EQ{ \label{B^T bound}
 \|\BB_3^{a,b,c}\|_{[H^{s}]}
  \lec (\LR{M}/M)^{s}l^{3/2-s}\LR{a}^{-1},}
for $0<s<2$. See \eqref{def HB} with \eqref{def LB} for the definition of norm. 
This bound will be proved in Section \ref{s:multest}. 

For the first term in \eqref{integ in s}, we use Lemma \ref{sbil} with some fixed $s=1+\e>1$ for $M\ll 1$ and $s=3/2$ for $M\gec 1$. Thus we get 
\EQ{ 
 \pt \left\|\sum_{a,b,c}\int e^{isH} \BB_3[\zn^\pm,\zn^\pm] ds\right\|_{H^1_x}
 \pr \lec \|\sum_{a,b,c} U(l)U(M)^{1/6}\LR{a}\LR{b}^{-1}\LR{c}^{-1}\BB_3^{a,b,c}\|_{[B^{1+\e}] + [B^{3/2}]} 
 \pmt \|U^{-1/6}\LR{\na}\zn\|_{(L^{1-\e/2}\cap L^{4/3})_tL^6_x} \|U^{-1}\LR{\na}\zn\|_{L^\I_t L^2_x}.}
For the norm on $\BB_3$, we have 
\EQ{
 \pt \|\sum_{M< 1}\cdots\BB_3^{a,b,c}\|_{[H^{1+\e'}]}
  \lec \sum_{l\le 1}lM^{-1/3-2\e'} \lec 1,
 \pr \|\sum_{M\ge 1}\cdots\BB_3^{a,b,c}\|_{[H^{3/2+\e'}]}
  \lec \sum_{l\in 2^\Z}U(l)\LR{l}^{-\e'-2} \lec 1,}
for $|\e'|$ small. Then by the interpolation \eqref{interpol for B}, we can change the norms to $[B^{1+\e}]$ and $[B^{3/2}]$, respectively. 
If the time interval is restricted on $(T,\I)$, then we get additional decay factor $T^{-\e/2}$ from the $L^6_x$ decay of $\zn$. 

For the time boundary term, we use Corollary \ref{cor:sbil} with $s=1$ and $3/2$. Then 
\EQ{
 \pt\|\sum_{a,b,c}e^{itH}\BB_3[t\zn^\pm,\zn^\pm]\|_{H^1_x} 
 \pr\lec  \|\sum_{a,b,c}U(l)U(M)^{1/3}\LR{a}\LR{l}^{-1/2}\LR{M}^{-1}\BB_3^{a,b,c}\|_{[B^{1}]+[B^{3/2}]} 
 \pmt \|U^{-1/3}t\zn\|_{H^{1,6}_x} \|U^{-1}\zn\|_{H^1_x},}
where we used $H^1\subset H^{1/2,3}$ for the lower frequency $l$ term when $M\gec 1$.  
The norm on $\BB_3$ is bounded by
\EQ{
 \pt \|\sum_{M< 1}\cdots\BB_3^{a,b,c}\|_{[H^{1+\e'}]}
  \lec \sum_{l \le 1}lM^{\e'-1/6} \lec 1,
 \pr \|\sum_{M\ge 1}\cdots\BB_3^{a,b,c}\|_{[H^{3/2+\e'}]}
  \lec \sum_{l\in 2^\Z}U(l)\LR{l}^{-\e'-3/2} \lec 1,}
for $|\e'|$ small, which change into the desired norms by the interpolation. 
For large $t>T$, we can gain $T^{-\e}$ by replacing $[B^1]$ with $[B^{1+\e}]$, and $H^1_x$ with $H^{1,1/2-\e/3}_x$. 
 
To the remaining terms in \eqref{integ in s}, we apply Lemma \ref{sbil} with some fixed $s=1/2+\e>1/2$ and $s=3/2$. We obtain 
\EQ{ \label{B3 aIBP}
 \pt \left\|\sum_{a,b,c}\int e^{isH}\{\BB_3[s\NN^\pm,\zn^\pm]\pm\BB_3[\zn^\pm,s\NN^\pm]\}ds \right\|_{H^1_x}
 \pr \lec \|\sum_{a,b,c}U(l)^{1/2}U(M)\LR{a}\LR{b}^{-1}\LR{c}^{-1}\BB_3^{a,b,c}\|_{[B^{1/2+\e}]+[B^{3/2}]} 
 \pmt \|U^{-1/2}\LR{\na} t\NN\|_{(L^{1-\e/2}\cap L^2)_t L^3_x} \|U^{-1}\LR{\na}\zn\|_{L^\I_t L^2_x}.}
For the norm on $\BB_3$, we estimate
\EQ{
 \pt\|\sum_{M<1}\cdots\BB_3^{a,b,c}\|_{[H^{1/2+\e'}]}
  \lec \sum_{l \le 1} l^{1/2}  \lec 1,
 \pr\|\sum_{M\ge 1}\cdots\BB_3^{a,b,c}\|_{[H^{3/2+\e'}]}
  \lec \sum_{l\in 2^\Z}U(l) \LR{l}^{-\e'-2} \lec 1,}
for $|\e'|$ small, which yields the bound on the desired norm via the interpolation. 

It remains to estimate the above norm on $\NN$. For the bilinear part of $\NN$, we use \eqref{bil nonsing}, splitting into $|t|<1$ and $|t|>1$, 
\EQ{
 \pt \|U^{-1/2}B_j[v^\pm,v^\pm]\|_{L^2_t H^{1,3}_x}
  \lec \|v\|_{L^2_t H^{1,6}_x} \|v\|_{L^\I_t L^6_x}, 
 \pr \|t^2U^{-1/2}B_j[v^\pm,v^\pm]\|_{L^\I_t H^{1,3}_x}
  \lec \|tv\|_{L^\I_t H^{1,6}_x}^2,}
for $j=3,4$. The cubic part of $\NN$ is bounded by using 
\EQ{
 \pt \|tC_j[v^\pm,v^\pm,v^\pm]\|_{(L^1\cap L^2)_t (H^1\cap H^{1,3})_x}
 \pn \lec \|tv\|_{L^\I H^{1,6}}  \|U^{-1}v\|_{L^\I H^1 \cap L^2 H^{1,6}}^2,}
for $j=1,2,3,4$. Then we apply Sobolev $U^{-1/2}(H^1\cap H^{1,3})_x\subset H^{1,3}_x$ at each $t$. 
For the quartic term we have
\EQ{
 \|Q_1(u)\|_{(H^1\cap H^{1,3})_x} 
  \lec \|u\|_{L^6_x}^4 \lec \LR{t}^{-2}\|v\|_X^4,}
which gives a better bound than the above. 
Thus we get the desired bound on $\NN$, completing the estimate \eqref{B3 aIBP}. If we restrict the time interval into $(T,\I)$, then we get additional decay $T^{-\e/2}$, where the worst decay comes from the bilinear part. 

\section{Estimates on the bilinear multipliers with divisors} \label{s:multest}
In this section we show that we can decompose the bilinear multipliers $B_j$, $j=3,4$ such that \eqref{B^X bound1}, \eqref{B^X bound2} and \eqref{B^T bound} hold, for all combinations of $\zn^\pm$, i.e., $B_j[\zn,\zn]$, $B_j[\bar{\zn},\zn]$ and $B_j[\bar{\zn},\bar{\zn}]$. 
We will use elementary geometry in the Fourier space in a way similar
to \cite{vac2}. Recall our notation in \eqref{def UHetc}. 
First we have for $|\x|\ge |\y|$  
\EQ{ \label{first bound}
 |\na H(\x)-\na H(\y)| &\sim |H'(\x)-H'(\y)| + H'(\y)|\hat{\x}-\hat{\y}|
 \pr\sim \frac{|\x|}{\LR{\x}}||\x|-|\y|| + \LR{\y}|\hat{\x}-\hat{\y}|,}
where $H'$ denotes the radial derivative of $H(\x)$. 
For the higher derivatives, we have
\EQ{
 |\na^k H(\x)| \lec \frac{\LR{\x}}{|\x|^{k-1}},}
for any $k\in\N$, hence by using the Taylor's formula when $|\x|\sim|\y|$, we obtain  
\EQ{  \label{higher bound}
 \pt|\na^k H(\x) - \na^k H(\y)|
  \lec \frac{\LR{\x}|\x-\y|}{|\x||\y|^{k-1}},}
for $|\x|\ge|\y|$. 
\subsection{Multiplier estimates for $\bar{\zn}\zn$}
In this subsection, we deal with the bilinear terms of the form
\EQ{
 e^{itH}B_j[\bar{\zn},\zn] = \F^{-1} \int e^{it\Om} B_j(\y,\x-\y) \J{\zn}^-(\y) \J{\zn}^+(\x-\y) d\y,}
with $j=3,4$. The phase $\Om$ and its derivatives are given by 
\EQ{
 \pt \Om := H(\x) + H(\y) - H(\x-\y),
 \pr \naxy\Om = \na H(\x) - \na H(\x-\y),
 \pq \na_\y\Om = \na H(\y) + \na H(\x-\y).}
Recall our notation for the dyadic component 
\EQ{
 \pt |\x|\sim a,\pq |\y|\sim b,\pq |\z|\sim c,
 \pr M=\max(a,b,c), \pq m=\min(a,b,c), \pq l=\min(b,c).}
We also denote in this subsection 
\EQ{
 \pt \al := |\hat{\z}-\hat{\x}|,
 \pq \be := |\hat{\z}+\hat{\y}|,
 \pq \y^\perp := -\hat\x\times\hat\x\times\y.}
We decompose the $(\x,\y,\z)$ region ($\x=\y+\z$) into the following five cases smoothly and exclusively. Namely, we exclude the earlier cases from the later cases. \begin{enumerate}
\item $|\y|\sim|\x|\gg |\z|$; Temporally non-resonant.
\item $\al>\sqrt{3}$; Temporally non-resonant.
\item $|\z|\gec 1$; Spatially non-resonant. 
\item $|\y^\perp|\ll M|\y|$; Temporally non-resonant.
\item Otherwise, spatially non-resonant. 
\end{enumerate}
Now we confirm the desired estimates in each case. 
\subsubsection{$|\y|\sim|\x| \gg |\z|$, into $B^T$} 
We put those dyadic pieces with $\min(b,a)\ge 8c$ into $B_j^T$ in \eqref{ST reson decop}. 
In this case we need to show \eqref{B^T bound}. 
Since $\Om=H(\x)+H(\y)-H(\z)$ and $m=|\z|\ll M$, we have 
\EQ{
 |\Om| = \Om \gec H(M) \sim M\Br{M}.}
We choose the $(\x,\z)$ coordinates to use the smallness of $\z$ region. Then  
\EQ{
 \na_\z^k\Om = -\na^k H(\z) + \na^k H(-\y),
 \pq \na_\z^k\naxy\Om = -\na^{1+k} H(\z),}
with trivial bounds
\EQ{
 \pt |\na_\z\Om| \lec \LR{M},
 \pq |\na_\z^2\Om| \lec \LR{m}/m, 
 \pr |\naxy\Om| \lec \LR{M},
 \pq |\na_\z^k\naxy\Om| \lec \LR{m}/m^k.}
Thus in this dyadic region we obtain
\EQ{
 \left\|\frac{\naxy\Om}{\Om}B_j^{a,b,c}\right\|_{L^\I_\x \dot H^s_\z}
 \lec \frac{\LR{M}}{M\LR{M}}\frac{m^{3/2}}{m^s}\frac{M}{\LR{M}} = m^{3/2-s}\LR{M}^{-1}}
for $j=3,4$ and $s=0,1,2$, which implies \eqref{B^T bound} by interpolation.  

In the remaining cases, we use the coordinates $(\x,\y)$ since $M\sim|\z|$ and $l\sim|\y|$. 

\subsubsection{$\al>\sqrt{3}$, into $B^T$} More precisely, we cut-off the multipliers by
\EQ{ \label{cut al}
 \chi_{[\al]} := \Gamma(\hat\x-\hat\z),}
for a fixed $\Gamma \in C^\I(\R^3)$ satisfying $\Gamma(x)=1$ for $|x|\ge \sqrt{3}$ and $\Gamma(x)=0$ for $|x|\le 3/2$. 
Thus we have $\al>3/2$ in this case, and $\al<\sqrt{3}$ in the remaining case. 
By exclusion of the previous case, we have $M\sim|\z|$ also. Hence we have 
\EQ{
 |\na_\y^k \chi_{[\al]}| \lec c^{-k} \sim M^{-k},}
for any $k\in\N$, which are acceptable errors in all of the following cases. 

Now we show \eqref{B^T bound} in this region. 
Since $\al>3/2$, we have $|\y|-|\z|\gec |\x|$. Hence $m\sim|\x|$ and  
\EQ{
 |\Om|=\Om \ge H(\y)-H(\z) \gec \LR{M}|\x| \sim \LR{M}m,}
and from \eqref{first bound},
\EQ{
 \pt |\naxy\Om| \lec \frac{M}{\LR{M}}|\y| + \LR{\x}\al, 
 \pq |\na_\y\Om| \lec \frac{M}{\LR{M}}|\x| + \LR{\y}\be.}
Since $\al>3/2$, we have $\be<1/2$ and so by the sine theorem
$\be \sim a\al/b \sim m/M$. 
Thus we obtain
\EQ{
 \pt |\naxy\Om| \lec \frac{M^2}{\LR{M}} + \LR{m} \lec \LR{M},
 \pq |\na_\y\Om| \lec \frac{Mm}{\LR{M}} + \frac{\LR{M}m}{M} \lec \frac{|\Om|}{M},
 \pr |\na_\y^k\naxy\Om| = |\na^{1+k}H(\z)| \lec \frac{\LR{M}}{M^{k}}.}
By \eqref{higher bound}, we have also 
\EQ{
 |\na_\y^2\Om| \lec \frac{\LR{M}m}{M^2} \lec \frac{|\Om|}{M^2}.}
Hence we have in this region 
\EQ{
 \left\|\frac{\naxy\Om}{\Om}\chi_{[\al]}B_j^{a,b,c}\right\|_{\dot H^s_\y}
  \lec \frac{\LR{M}}{\LR{M}m}\frac{M^{3/2}}{M^s} \frac{m}{\LR{m}} = \frac{M^{3/2-s}}{\LR{m}}\sim\frac{l^{3/2-s}}{\LR{a}},}
for $j=3,4$, and for $s=0,1,2$. This yields \eqref{B^T bound} by interpolation. 

\subsubsection{$|\z|\gec 1$, into $B^X$}
We put those remaining parts with $c\ge 1$ into $B^X$. 
By exclusion of the previous cases, we have 
$M\sim|\z|\gec 1$ and $\al<\sqrt{3}$. We want to show \eqref{B^X bound2} in this region. 
For the first derivatives, we have from \eqref{first bound},
\EQ{
 \pt |\naxy\Om| \sim ||\z|-|\x|| + \LR{\x}\al,
 \pq |\na_\y\Om| \sim ||\z|-|\y|| + \LR{\y}\be.}
On the other hand, we have 
\EQ{ \label{cos exp}
 \pt|\x|^2 = ||\z|-|\y||^2 + 2|\z||\y|(1+\hat{\z}\cdot\hat{\y})
  \sim ||\z|-|\y||^2 + |\z||\y|\be^2,
 \pr|\y|^2 = ||\z|-|\x||^2 + 2|\z||\x|(1-\hat{\z}\cdot\hat{\x})
  \sim ||\z|-|\x||^2 + |\z||\x|\al^2.}
Hence we have
\EQ{
 |\naxy\Om|\lec |\y|, \pq |\na_\y\Om| \sim ||\z|-|\y||+\LR{\z}\be \gec |\x|.}
For the higher derivatives, we have from \eqref{higher bound}, 
\EQ{
 \pt |\na_\y^k\naxy\Om| \lec \LR{\z}|\z|^{-k} \sim M^{1-k},
 \pq |\na_\y^k\Om| \lec |\x||\y|^{1-k} \lec |\na_\y\Om||\y|^{1-k}.}
Therefore we have in this region 
\EQ{
 \pt \left\|\frac{\naxy\Om\na_\y\Om}{|\na_\y\Om|^2}\chi_{[\al]}^C B_j\right\|_{\dot H^s_\y}
   \lec \frac{b^{1+3/2}}{ab^s}U(a)
   = l^{5/2-s}\LR{a}^{-1},
 \pr \left\|\frac{\na_\y\naxy\Om \na_\y\Om}{|\na_\y\Om|^2}\chi_{[\al]}^C B_j\right\|_{\dot H^s_\y}
   + \left\|\frac{\naxy\Om(\na_\y\Om)^2\na_\y^2\Om}{|\na_\y\Om|^4}\chi_{[\al]}^C B_j\right\|_{\dot H^s_\y}
 \pr + \left\|\frac{\naxy\Om\na_\y\Om}{|\na_\y\Om|^2}\na_\y[\chi_{[\al]}^C B_j]\right\|_{\dot H^s_\y}
   \lec \frac{b^{3/2}}{ab^s}U(a)
   = l^{3/2-s}\LR{a}^{-1},}
for $j=3,4$ and $s=0,1,2$, where $\chi_{[\al]}^C=1-\chi_{[\al]}$ is excluding the previous case. 
The above estimates imply \eqref{B^X bound2} by interpolation. 

\subsubsection{$|\y^\perp|\ll M|\y|$, into $B^T$}
More precisely, we cut-off the multipliers by
\EQ{ \label{cut perp}
 \chi_{[\perp]}:= \chi(\LR{M}|\y\times\hat{\x}|/(100Mb)),}
with $\chi \in C^\I_0(\R)$ satisfying $\chi(\mu)=1$ for $|\mu|\le 1$ and $\chi(\mu)=0$ for $|\mu|\ge 2$. Then we have
\EQ{
 |\na_\y^k \chi_{[\perp]}| \lec (\LR{M}/Mb)^{k},}
which is supported around $|\y^\perp|\sim Mb/\LR{M}$ for all $k\ge 1$. 
We have included $\LR{M}$ for later reuse, but in this subsection we can ignore it because $M\ll 1$. More precisely, we have by exclusion of the previous cases, 
\EQ{
 1\gg M\sim |\z|, \pq \al<\sqrt{3}.}
Now we prove \eqref{B^T bound} in this region. 
First if $M\not=|\z|$, then $\Om\ge H(m)\gec m$ and  
\EQ{
 \pt |\na_\y^k\naxy\Om| \lec \frac{|\y|^{1-k}}{M}, \pq |\na_\y^{k+1}\Om| \lec \frac{|\x|}{M|\y|^k},}
for $k\ge 0$, by \eqref{higher bound}. Hence we have in this region 
\EQ{
 \left\|\frac{\naxy\Om}{\Om}\chi_{[\perp]}\chi_{[\al]}^C B_j\right\|_{\dot H^s_\y}
 \lec \frac{b^{1+3/2}}{mM(Mb)^s}a
 \sim l^{3/2}(Ml)^{-s},}
for $j=3,4$ and $s=0,1,2$, where $\chi_{[\al]}^C$ is due to exclusion of the second case. Note that in this case the smallest divisor $(Mb)^s$ is from the cut-off $\chi_{[\perp]}$. The above estimate implies \eqref{B^T bound} by interpolation. 

Next we consider the main case $M=|\z|$. Let 
\EQ{
 \la := |\x|+|\y|-|\z|.}
Then by elementary geometry, we have
\EQ{
 \pt |\y^\perp| \sim |\y|(\al+\be),
 \pq \la \sim m(\al^2+\be^2) \sim m(|\y^\perp|/|\y|)^2.}
Hence the assumption in this case implies that $\la\ll M^2m$. 
Now we use the small but non-zero curvature of $H(\x)$ in the radial direction around $\x\sim 0$, to get a lower bound on $\Om$ (cf. \cite[(4.52)]{vac2}): 
\EQ{ \label{est degH''}
 \pt -\Om = [H(|\x|+|\y|)-H(\x)-H(\y)] + [H(\x-\y)-H(|\x|+|\y|)],
 \pr |H(\x-\y)-H(|\x|+|\y|)| \lec \LR{M}\la,
 \pr H(|\x|+|\y|)-H(\x)-H(\y) \sim \frac{|\x||\y|(|\x|+|\y|)}{\LR{\x}+\LR{\y}}.}
Thus we obtain
\EQ{
 |\Om| \sim M^2 m.}
For the first derivatives we have
\EQ{
 \pt |\naxy\Om| \lec M|\y| + \al \ll |\y|,
 \pq |\na_\y\Om| \lec M|\x| + \be \ll |\x|,}
since 
\EQ{
 |\x|\al \sim |\y|\be \sim m(\al+\be) \ll mM \sim |\x||\y|.} 
For the higher derivatives we have from \eqref{higher bound},
\EQ{ \label{hider1}
 |\na_\y^k\naxy\Om| \lec M^{-k},
 \pq |\na_\y^{1+k}\Om| \lec \frac{|\x|}{M|\y|^k}.}
Using the volume bound $M^2b^3$ as well, we obtain in this case 
\EQ{
 \left\|\frac{\naxy\Om}{\Om}\chi_{[\perp]}\chi_{[\al]}^C B_j\right\|_{\dot H^s_\y}
 \lec \frac{b Mb^{3/2}}{M^2m(Mb)^s}a
 \sim l^{3/2}(Ml)^{-s},}
for $j=3,4$ and $s=0,1,2$, which implies \eqref{B^T bound} by interpolation.    

\subsubsection{Otherwise, into $B^X$}
By excluding all of the previous cases, we have now
\EQ{
 1\gg M\sim|\z|,\pq \al<\sqrt{3}, \pq |\y^\perp|\gec M|\y|.}
We want to prove \eqref{B^X bound1} in this region. 
For the first derivatives, we have from \eqref{first bound},
\EQ{
 |\na_\y\Om| \sim M||\z|-|\y||+\be,
 \pq |\naxy\Om| \sim M||\z|-|\x||+\al.}
For the angular part, we have by the sine theorem
\EQ{
 \al/\be \lec |\y|/|\x|,}
since $\al$ is away from $2$. For the radial part, we have from \eqref{cos exp}
\EQ{
 M||\z|-|\x||\lec M|\y|, \pq |\na_\y \Om| \gec M|\x|.}
Thus we obtain 
\EQ{
 |\naxy\Om|/|\na_\y\Om| \lec |\y|/|\x|.}
Using \eqref{hider1} for the higher derivatives, we get 
\EQ{
 \pt \frac{|\na_\y^k\Om|}{|\na_\y\Om|} \lec \frac{|\x|}{M|\y|^{k-1}\be}
  \lec \frac{1}{|\y|^{k-2}|\y^\perp|},
 \pq \frac{|\na_\y^k\naxy\Om|}{|\na_\y\Om|} \lec \frac{1}{M^k\be}
  \lec \frac{|\y|}{M^{k-1}|\x||\y^\perp|},}
where we used 
\EQ{
 |\x||\y^\perp| \lec |\z||\y|\be.}
Now we apply the dyadic decomposition to 
\EQ{
 |\y^\perp|\sim \mu\in\{k\in 2^\Z\mid k\ge Mb\}.} 
Getting a factor $\mu|\y|^{1/2}$ from the $L^2_\y$ volume of each dyadic piece, we have 
\EQ{
 \pt \left\|\frac{\naxy\Om\na_\y\Om}{|\na_\y\Om|^2}\chi_{[\perp]}^C\chi_{[\al]}^C B_j\right\|_{\dot H^s_\y}
   \lec \sum_{\mu \ge Mb} \frac{b\mu b^{1/2}}{a \mu^{s}}a
   \lec l^{3/2}(Ml)^{1-s},
 \pr \left\|\frac{\na_\y\naxy\Om \na_\y\Om}{|\na_\y\Om|^2}\chi_{[\perp]}^C\chi_{[\al]}^C B_j\right\|_{\dot H^s_\y}
  + \left\|\frac{\naxy\Om(\na_\y\Om)^2\na_\y^2\Om}{|\na_\y\Om|^4}\chi_{[\perp]}^C\chi_{[\al]}^C B_j\right\|_{\dot H^s_\y}
 \pr + \left\|\frac{\naxy\Om\na_\y\Om}{|\na_\y\Om|^2}\na_\y[\chi_{[\perp]}^C\chi_{[\al]}^C B_j]\right\|_{\dot H^s_\y} \lec \sum_{\mu \ge Mb} \frac{b\mu b^{1/2}}{a\mu^{1+s}}a 
   \lec l^{3/2}(Ml)^{-s},}
for $j=3,4$ and $1<s\le 2$, by using interpolation for each dyadic piece in $\mu$. The condition $s>1$ is only to have convergence of the sum in $\mu$. 
As before, $\chi_{[\al]}^C=1-\chi_{[\al]}$ and $\chi_{[\perp]}^C=1-\chi_{[\perp]}$ are excluding the previous cases. 
The above estimates imply \eqref{B^X bound1}.

\subsection{Multiplier estimates for $\zn\zn$} \label{ss:zz}
We consider the bilinear terms of the form
\EQ{
 e^{itH}B_j[\zn,\zn] = \F^{-1}\int e^{it\Om}B_j(\y,\x-\y) \J{\zn}^+(\y) \J{\zn}^+(\x-\y) d\y,}
with $j=3,4$, where the phase is given by
\EQ{
 \pt \Om = H(\x) - H(\y) - H(\x-\y),
 \pr \naxy\Om = \na H(\x) - \na H(\x-\y),
 \pq \na_\y\Om = - \na H(\y) + \na H(\x-\y).}
Recall that $|\x|\sim a$, $|\y|\sim b$, $|\z|\sim c$, $M=\max(a,b,c)$, $m=\min(a,b,c)$ and $l=\min(b,c)$. 
By symmetry, we may assume without loss of generality that
\EQ{
 b \le c,}
in this subsection. We also denote in this subsection 
\EQ{
 \pt \al := |\hat{\x}-\hat{\z}|,
 \pq \be' := |\hat{\y}-\hat{\z}|,
 \pq \ga := |\hat{\x}-\hat{\y}|, 
 \pq \y^\perp := -\hat\x\times\hat\x\times\y.}
We decompose the $(\x,\y,\z)$ region into the following four cases exclusively. 
\begin{enumerate}
\item $|\y|\sim|\z|\gg|\x|$; Temporally non-resonant.
\item $\al>\sqrt{3}$; Temporally non-resonant. 
\item $|\y^\perp|\ll M|\y|/\LR{M}$; Temporally non-resonant.
\item Otherwise, spatially non-resonant. 
\end{enumerate}

\subsubsection{$|\y|\sim|\z|\gg|\x|$, into $B^T$}
We put the dyadic pieces with $\min(b,c)\ge 4a$ into $B^T$. 
Then we have $M\sim |\z|\sim|\y|\gg|\x|= m$ and  
\EQ{
 \pt |\Om| \ge H(\y) \sim M\LR{M},
 \pq |\na_\y^k\naxy\Om| \lec \LR{M}M^{-k}, \pq |\na_\y\Om| \lec \LR{M}M^{-k},}
such that in this case
\EQ{
 \left\|\frac{\naxy\Om}{\Om}B_j^{a,b,c}\right\|_{\dot H^s_\y}
  \lec \frac{\LR{M}}{M\LR{M}}\frac{M^{3/2}}{M^s}\frac{m}{\LR{m}}
  \sim l^{3/2-s}\frac{m}{M}\LR{a}^{-1},}
for $j=3,4$ and $s=0,1,2$. By interpolation, this is better than \eqref{B^T bound}. 

\subsubsection{$\al>\sqrt{3}$, into $B^T$}
We use the same cut-off $\chi_{[\al]}$ as defined in \eqref{cut al}, 
so that we have $\al>3/2$ in this case, and $\al<\sqrt{3}$ in the remaining case. Then in this region we have $M=|\y|\sim|\z|\sim|\x|$, and 
\EQ{
 |\Om| \ge H(\z) \gec M\LR{M},}
so that we end up with the same bound as in the previous case. 

In the remaining cases, $\al<\sqrt{3}$ and $|\y|\lec|\z|$, hence we have
\EQ{
 \al \lec \ga \sim \be'.}
\subsubsection{$|\y^\perp|\ll M|\y|/\LR{M}$, into $B^T$}
More precisely, we decompose the multipliers by the cut-off $\chi_{[\perp]}$ defined in \eqref{cut perp}. By exclusion of the previous cases, we have now
\EQ{
 M\sim|\x|\sim|\z|\gec|\y|\sim m.}
For the higher derivatives, we have from \eqref{higher bound}
\EQ{ \label{hider2}
 \pt |\na_\y^k\naxy\Om| \lec \frac{\LR{M}}{M^k},
 \pq |\na_\y^{1+k}\Om| \lec \frac{\LR{m}}{m^k}.}
If $|\z|-|\x|\gec|\y|$, then we have
\EQ{
 \pt |\Om|\ge H(\z)-H(\x) \gec \LR{M}m,
 \pq |\na_\x\Om| \lec \frac{\LR{M}}{M}m, \pq |\na_\y\Om| \lec \LR{M},}
and so
\EQ{
 \left\|\frac{\na_\x\Om}{\Om}\chi_{[\perp]}\chi_{[\al]}^CB_j\right\|_{\dot H^s_\y}
  \lec \frac{m^{3/2}}{M} \left(\frac{\LR{M}}{Mm}\right)^{s}  \frac{M}{\LR{M}}
  = \frac{m^{3/2}}{\LR{M}}\left(\frac{\LR{M}}{Mm}\right)^{s},}
for $j=3,4$ and $s=0,1,2$, where $\chi_{[\al]}^C=1-\chi_{[\al]}$ eliminates the previous case. By interpolation, it gives \eqref{B^T bound}. 

In the main case $|\z|-|\x|\ll|\y|$, let $\la' := |\z|+|\y|-|\x|$. 
Since $\al\lec\ga$ and both are away from $2$, we have
\EQ{
 \la' \sim m(\al^2+\ga^2) \sim m\ga^2 \sim m(|\y^\perp|/|\y|)^2 \ll \frac{M^2}{\LR{M}^2}m.}
Then by the same argument as in \eqref{est degH''}, we get 
\EQ{
 |\Om| \sim \frac{M^2}{\LR{M}}m.}
For the first derivatives, we have from \eqref{first bound},
\EQ{ \label{est first der}
 \pt |\naxy\Om| \sim \frac{M}{\LR{M}}||\z|-|\x|| + \LR{M}\al,
 \pq |\na_\y\Om| \sim \frac{M}{\LR{M}}||\z|-|\y|| + \LR{m}\be'.}
Combining them with  
\EQ{
 \al \lec \frac{m}{M}\ga, \pq \be'\sim\ga\sim\frac{|\y^\perp|}{|\y|} \ll \frac{M}{\LR{M}},}
we get
\EQ{
 |\naxy\Om| \lec m, \pq |\na_\y\Om| \lec M.}
Using the volume bound $(M/\LR{M})^2m^3$ as well, we obtain in this region
\EQ{
 \left\|\frac{\naxy\Om}{\Om}\chi_{[\perp]}\chi_{[\al]}^CB_j\right\|_{\dot H^s_\y}
  \lec \frac{\LR{M}}{M^2} \frac{Mm^{3/2}}{\LR{M}}\left(\frac{\LR{M}}{Mm}\right)^s \frac{M}{\LR{M}}
  = \frac{m^{3/2}}{\LR{M}}\left(\frac{\LR{M}}{Mm}\right)^{s},}
for $j=3,4$ and $s=0,1,2$, which gives \eqref{B^T bound} by interpolation.

\subsubsection{Otherwise, into $B^X$} \label{mest zz3}
In the remaining case, we have
\EQ{
 \pt M\sim|\x|\sim|\z|\gec|\y|\sim m,
 \pq |\y^\perp|\gec Mm/\LR{M}.}
Using $\al\lec\ga\sim\be'$ also, we have 
\EQ{
 \pt \frac{\al}{\be'}\sim\frac{\al}{\ga}\lec\frac{|\y|}{|\z|} \sim \frac{m}{M},
 \pq \frac{M}{\LR{M}}\lec \frac{|\y^\perp|}{|\y|} \lec \ga\sim\be',}
and in particular, $\ga\gec 1$ if $M\gec 1$. 
In that case, we have from \eqref{est first der} and \eqref{cos exp}, 
\EQ{
 \pt |\naxy\Om| \sim ||\z|-|\x||+M\al \sim |\y|,
 \pq |\na_\y\Om| \gec \LR{m}.}
If $M\lec 1$, we have 
\EQ{
 |\naxy\Om| \lec Mm + \frac{m}{M}\be' \sim \frac{m}{M}|\na_\y\Om|.}
Thus in both cases we have
\EQ{
 \frac{|\naxy\Om|}{|\na_\y\Om|} \lec \frac{m\LR{M}}{M\LR{m}}.}
Using \eqref{hider2} for the higher derivatives, we get 
\EQ{
 \frac{|\na_\y^{1+k}\Om|}{|\na_\y\Om|} \lec \frac{1}{m^k\be'}
  \sim \frac{1}{|\y|^k\ga} \lec \frac{1}{m^{k-1}|\y^\perp|},
 \pq \frac{|\na_\y^k\naxy\Om|}{|\na_\y\Om|}
  \lec \frac{m\LR{M}}{M^k\LR{m}|\y^\perp|}.}
Thus in this region we obtain 
\EQ{ 
 \pt \left\|\frac{\naxy\Om\na_\y\Om}{|\na_\y\Om|^2}\chi_{[\perp]}^C\chi_{[\al]}^C B_j\right\|_{\dot H^s_\y}
   \lec \sum_{\mu\gec \mu_0}\frac{m\LR{M} \mu m^{1/2}}{M\LR{m}\mu^{s}}\frac{M}{\LR{M}}
   \lec \mu_0^{1-s}\frac{m^{3/2}}{\LR{m}},
 \pr \left\|\frac{\na_\y\naxy\Om \na_\y\Om}{|\na_\y\Om|^2}\chi_{[\perp]}^C\chi_{[\al]}^C B_j\right\|_{\dot H^s_\y} 
   + \left\|\frac{\naxy\Om(\na_\y\Om)^2\na_\y^2\Om}{|\na_\y\Om|^4}\chi_{[\perp]}^C\chi_{[\al]}^C B_j\right\|_{\dot H^s_\y}
 \pr + \left\|\frac{\naxy\Om\na_\y\Om}{|\na_\y\Om|^2}\na_\y[\chi_{[\perp]}^C\chi_{[\al]}^C B_j]\right\|_{\dot H^s_\y}
   \lec \sum_{\mu\gec \mu_0}\frac{m\LR{M} \mu m^{1/2}}{M\LR{m}\mu^{1+s}}\frac{M}{\LR{M}}
   \lec \mu_0^{-s}\frac{m^{3/2}}{\LR{m}},}
for $j=3,4$ and $1<s\le 2$ by interpolation for each dyadic piece of $\mu$, where $\mu_0:=Mm/\LR{M}$, the sum for $\mu$ is on $2^\Z$, and $\chi_{[\perp]}^C$ and $\chi_{[\al]}^C$ are removing the previous regions. 
The above estimates imply \eqref{B^X bound1} and \eqref{B^X bound2}. 

\subsection{Multiplier estimates for $\bar{\zn}\bar{\zn}$} 
For the bilinear terms of the form
\EQ{
 e^{itH}B_j[\bar{\zn},\bar{\zn}] = \int e^{it\Om}B_j(\y,\x-\y) \J{\zn}^-(\y) \J{\zn}^-(\x-\y) d\y,}
we need not decompose, since it is always temporally non-resonant. 
Indeed we have
\EQ{
 \pt \Om = H(\x)+H(\y)+H(\x-\y) \gec M\LR{M}.}
By symmetry, we may assume that $|\y|\lec|\z|$. Then
\EQ{
 \pt |\naxy\Om|+|\na_\y\Om| \lec \LR{M},
 \pq |\na_\y^k\naxy\Om| \lec \frac{\LR{M}}{M^k},
 \pq |\na_\y^{1+k}\Om| \lec \frac{\LR{\y}}{|\y|^k},}
hence we get 
\EQ{
 \left\|\frac{\naxy\Om}{\Om}B_j\right\|_{\dot H^s_\y}
  \lec \frac{\LR{M}b^{3/2}}{M\LR{M}b^s} U(a)
  \lec l^{3/2-s}\LR{M}^{-1},}
for $j=3,4$ and $s=0,1,2$, which gives \eqref{B^T bound} via interpolation. 

\section{Estimates with phase derivative in cubic terms}
In this section we estimate the phase derivatives in the cubic terms $C_j$, $1\le j\le 5$. 
The higher order terms $Q_j(u)$ have been already estimated in \eqref{JN1,4}, by using regularity gain. Some parts in the frequency interactions of $C_j$ can be also estimated in a similar way, 
but if we thereby estimate the whole interactions, we can not avoid losing regularity by the factor $s\na_\x\Om$. 

On the other hand, the cubic terms have in general a large set of interactions which are resonant both in space and time. 
Hence we cannot always integrate by parts as for the bilinear terms. 

Fortunately, it turns out that those two difficulties are disjoint in the Fourier space. Namely, when the derivative loss by $\na_\x\Om$ becomes really essential, the interaction is not temporally resonant, i.e., $|\Om|$ is even bigger than $|\na_\x\Om|$. Hence we integrate on the phase in $s$ for those parts, and the other parts are bounded in the form $\na_\x\Om$. Here we do not need such precise estimates on the resulting multipliers as in the bilinear case, since the divisor appear only when $|\x|\gg 1$. 

Before applying $J$ to the cubic terms $C_j$, we apply the frequency  decomposition to each $v$ as in \eqref{LP decop}: 
\EQ{
  v = \sum_{k\in 2^\Z} v_k, \pq v_k := \chi^k(\na)v,}
Thus in the Fourier space we get terms like
\EQ{ \label{cubic with phase}
 \sum_{k_1,k_2,k_3\in 2^\Z} \int_0^t  s \na_\x\Om e^{isH} C_j[v^\pm_{k_1},v^\pm_{k_2},v^\pm_{k_3}] ds,}
where 
\EQ{
 \Om = H(\x) \mp H(\x_1) \mp H(\x_2) \mp H(\x_3),\pq
 \na_\x\Om = \na H(\x) \mp \na H(\x_1),}
and one of $v$ has to be replaced with $\zn$ if $j=5$.  
We consider the summand with $k_1\ge\max(k_2,k_3)$, choosing the smaller $\x_2,\x_3$ as integral variables. The other cases are treated in the same way. 

If $k_1\lec 1$, then all $|\na H(\x_k)|$ are bounded, hence we can bound it in $L^\I H^1$ by
\EQ{
 \|tC_j[v^\pm_{k_1},v^\pm_{k_2},v^\pm_{k_3}]\|_{L^2 L^{6/5}}
 \lec \|tv\|_{L^\I L^6} \|U^{-1}v\|_{L^\I L^2} \|U^{-1}v\|_{L^2 L^6}.}
If $1\ll k_1\sim\max(k_2,k_3)$, then we have $|\na_\x\Om|\lec\max(k_2,k_3)$. Hence the $L^\I H^1$ norm is bounded by
\EQ{ \label{C hhl}
 \|k_1^2 tC_j[v_{k_1}^\pm,v_{k_2}^\pm,v_{k_3}^\pm]\|_{L^2 L^{6/5}}
 \lec \|tv\|_{L^\I H^{1,6}} \|U^{-1}v\|_{L^\I H^1} \|U^{-1}v\|_{L^2 H^{1,6}}.}
In both cases we get $T^{-1/10}$ for $t>T$ from the $L^6_x$ decay of $U^{-1}v$. If $j=5$, we just replace one of $v$ by $\zn$. 

It remains to estimate the terms with $k_1\gg\max(1,k_2,k_3)$. 
If the sign in the phase is $\Om=H(\x)-H(\x_1)\pm\cdots$, then we have
\EQ{
 \pt \na_\x\Om = F(\x,\x_2+\x_3)\cdot(\x_2+\x_3),
 \pq F(\x,\y):=\int_0^1 \na^2 H(\x-\th\y)d\th,}
getting the factor $\x_2+\x_3$. 
In the region $|\x|\gg\LR{\y}$, we have 
\EQ{
 |\na_\x^k \na_\y^l F(\x,\y)| \lec \int_0^1 |\na^{2+k+l}H(\x-\th\y)| d\th
 \lec |\x|^{-k-l}.}
Hence the Coifman-Meyer estimate \eqref{bilCM} applies to $F$, and so we can bound the Strichartz norm as in the previous case. 

Finally, if $\Om=H(\x)+H(\x_1)\pm\cdots$ with $k_1\gg \max(1,k_2,k_3)$, we have
\EQ{
 |\Om| \gec k_1^2, \pq |\na_\x\Om| \lec k_1,}
so that we can integrate on $e^{is\Om}$ in $s$. 
Thus we get terms like
\EQ{ \label{IBP C}
 \int_0^t \frac{\na_\x\Om}{\Om} e^{isH}\left\{C[v,v,v]+sC[\Nv,v,v]\right\} ds
 + \left[\frac{\na_\x\Om}{\Om}e^{isH}sC[v,v,v]\right]_0^t,}
where we omitted indexes $j$, $k_*$ and $\pm$. Moreover if $j=5$, one of $v$ (or $\Nv$) should be replaced with $\zn$ (resp. $\NN$). 
Recall that $\Nv$ and $\NN$ are the nonlinearities defined in \eqref{eq v} and \eqref{def NN}, respectively. 
The multipliers have no singularity thanks to the frequency restrictions. 
Hence using the Coifman-Meyer estimate \eqref{bilCM}, we can bound the above term in $L^\I H^1$ by
\EQ{
  \pt\|C[v,v,v]\|_{L^2 L^{6/5}} + \|tC[\Nv,v,v]\|_{L^2 L^{6/5} + L^{4/3} L^{3/2}}
  + \|tC[v,v,v]\|_{L^\I L^2}
  \pr\lec \|v\|_{L^2 L^6}\|U^{-1}v\|_{L^\I L^3}^2 
   + (\|\Nv\|_{L^2 L^2 + L^{4/3} L^3}+\|v\|_{L^\I L^6})\|t^{1/2}U^{-1}v\|_{L^\I L^6}^2,}
where the norm on $\Nv$ is bounded by $L^2 H^{1,6/5}+L^{4/3} H^{1,3/2}$. 
For $\NN$, it was already estimated in Section \ref{ss:est w/o w}, and the same estimates \eqref{Stz on B} and \eqref{Stz on C} hold for $\Nv$ as well. 
For $t>T$, we gain $T^{-1/5}$ from the $L^6_x$ and $L^3_x$ decay of $U^{-1}v$ (or $U^{-1}\zn$). 

\section{Proof of the initial data result}
Now that we have closed estimates in $X$ for $(v,\zn)$ solving the system \eqref{eq uzn}, we can easily finish the proof for Theorem \ref{thm:init}. 
More precisely, we have obtained the following estimates: For any interval $I=(T_0,T_1)\subset\R$, and for any $(v,z)\in C(I;X)$ satisfying \eqref{eq v} and \eqref{eq uzn} on $I$, we have 
\EQ{
 \pt\|v(t)-\zn(t)\|_{X(t)} \lec \LR{t}^{-1/6}\|v(t)\|_{X(t)}^2\pq(t\in I),
 \pr\|v-\zn\|_{S(I)} \lec \LR{T_0}^{-7/10}\|v\|_{X(I)}^2, 
 \pr\|\zn-e^{-i(t-T_0)H}\zn(T_0)\|_{S(I)} \lec \LR{T_0}^{-1/2}\left\{\|v\|_{(X\cap S)(I)}^2 + \|v\|_{(X\cap S)(I)}^4 \right\}, 
 \pr\|\zn-e^{-i(t-T_0)H}\zn(T_0)\|_{X(I)} 
 \pr\lec \LR{T_0}^{-\e}\left\{\|v\|_{(X\cap S)(I)}^2 + \|v\|_{(X\cap S)(I)}^6
   + \|\zn\|_{(X\cap S)(I)}^2\right\},}
for some small $\e>0$, where the highest order $6$ comes from the partial integration replacing one of $v$ in $C_j$ by the quartic term $Q_1(u)$. 

There are at least two ways to procede: (1) construct global solutions
with the desired properties by an iteration argument, or (2) use those estimates as a priori bounds for the global solutions already known in $H^1$. 

Since the equations for the iteration sequence are a bit complicated due to the partial integration and the nonlinear transforms, we employ the latter (2) approach. 
Then we just need to ensure that the global solutions from our initial data stay in $X$ so that the a priori bounds can be rigorously applied. 
Thanks to the global wellposedness \cite{BS1} in $\psi\in C(\R;1+H^1(\R^3))$ for \eqref{GP}, it suffices to prove that only for a subset of initial data which is dense with respect to the $H^1$ norm for $u$. 

It is quite standard to see that for any $u(0)\in H^2\cap\LR{x}^{-1}H^1$, the unique global solution of \eqref{eq u} stays in the same space: since we already have it in $L^\I H^1$, we just apply the fixed point argument to the following estimates on $(0,T)$ 
\EQ{
 \pt\|u\|_{L^\I H^2} \lec \|\Nu\|_{L^2 H^{2,6/5}+L^1 H^2}
  \lec (T^{1/2}+T)\|u\|_{L^\I H^2}(1+\|u\|_{L^\I H^1})^2,
 \pr\|(x+2it\na)u\|_{H^1} \lec \|(x+2it\na)\Nu\|_{L^2 H^{1,6/5}+L^1 H^1}
 \prq\lec (T^{1/2}+T^2)(\|xu\|_{L^\I H^1}+\|u\|_{L^\I H^2})(1+\|u\|_{L^\I H^1})^2,}
where we denote the nonlinearity in the equation \eqref{eq u0} by
\EQ{
 \Nu := u^2+2|u|^2+|u|^2u.}
The above estimates follow from Strichartz applied to \eqref{eq u0}, 
and the identity $x+2it\na=e^{it\De}xe^{-it\De}$. Thus we get the solution locally in this space, and then extend it globally by Gronwall. 

To those solutions $u\in H^2\cap\LR{x}^{-1}H^1$ satisfying \eqref{init u} with $v=u_1+iUu_2$ and $\zn=v+b(u)$, we can apply the above a priori estimates on any interval, thereby getting global bounds
\EQ{
 \|v\|_{X\cap S} + \|\zn\|_{X\cap S} \lec \|v(0)\|_{X(0)} \lec \de,}
provided $\de$ is sufficiently small. By a density argument, all solutions satisfying our initial assumption can be obtained as limits of such solutions. Hence the above global bounds are transferred to them, and we get the desired scattering for them, using the identities 
\EQ{
 \pt\|e^{itH}\zn(t)-e^{iT_0H}\zn(T_0)\|_{H^1} = \|\zn(t)-e^{-i(t-T_0)H}\zn(T_0)\|_{H^1},
 \pr\left\|x\left\{e^{itH}\zn(t)-e^{iT_0H}\zn(T_0)\right\}\right\|_{H^1}
 = \left\|J\left\{\zn(t)-e^{-i(t-T_0)H}\zn(T_0)\right\}\right\|_{H^1}.}

Denote the map $W:v(0)\mapsto v_+$ thereby obtained (the inverse wave operator) locally around $0$ in $\LR{x}^{-1}H^1$. 
The continuity and the invertibility of $W$ are also proved in a standard way. The above estimates are adapted directly to differences of solutions, since after writing the system \eqref{eq uzn} for $(v,\zn)$, we did not use any genuinely nonlinear structure, but only multi-linear estimates. 
Let $(v^{(j)},\zn^{(j)})$ be small solutions of \eqref{eq uzn} obtained above, and let $f^{(j)}_T=e^{-i(t-T)H}\zn^{(j)}(T)$, for $j=1,2$ and $T\ge 0$. Then we have
\EQ{
 \pt\|\zn^{(1)}-\zn^{(2)}-(f^{(1)}_T-f^{(2)}_T)\|_{X\cap S}
 \prq\lec\sum_{j=1}^2(\|v^{(j)}\|_{X\cap S}+\|\zn^{(j)}\|_{X\cap S})
 \pn(\|v^{(1)}-v^{(2)}\|_{X\cap S}+\|\zn^{(1)}-\zn^{(2)}\|_{X\cap S}),}
and also
\EQ{
 \|v^{(1)}-v^{(2)}-(\zn^{(1)}-\zn^{(2)})\|_{X\cap S}
  \lec\sum_{j=1}^2\|v^{(j)}\|_{X\cap S}\|v^{(1)}-v^{(2)}\|_{X\cap S}.}
Hence by using the smallness assumption, we obtain
\EQ{
 \pt\|\zn^{(1)}-\zn^{(2)}\|_{X\cap S} \sim \|v^{(1)}-v^{(2)}\|_{X\cap S} \sim \|f^{(1)}_T-f^{(2)}_T\|_{X\cap S} 
 \pr\sim \|\zn^{(1)}(T)-\zn^{(2)}(T)\|_{X(T)} \sim \|v^{(1)}(T)-v^{(2)}(T)\|_{X(T)}.}
Since $W(v^{(j)}(0))=\lim_{t\to\I}e^{itH}\zn^{(j)}(t)$ in $\LR{x}^{-1}H^1$, the bi-Lipschitz property of $W$ follows from the above equivalence. 

Concerning the invertibility, or construction of the wave operator, we can argue as for Theorem \ref{thm:fin}, namely consider an approximating sequence of solutions $(v^T,\zn^T)$ with $\zn^T(T)=e^{-iTH}v_+$ for any given small $v_+\in\LR{x}^{-1}H^1$, and let $T\to\I$. Then the above a priori bound implies weak convergence of some subsequence in $X\cap S$, whose limit has the desired scattering property. \qedsymbol 
\begin{appendix}
\section{Correction for the 2D asymptotic profile in \cite{vac2}}
The asymptotic profile given in \cite{vac2} for the two dimensional case was incorrect. 
The equation for $z:=U^{-1}[u_1+(2-\De)^{-1}|u|^2]+iu_2$ was given by\footnote{This $z$ in \cite{vac2} is equal to $U^{-1}\zo$ in this paper.}
\EQ{
 i\dot z - Hz &= 2u_1^2 + 4i\na(2-\De)^{-1}U^{-1}\na(u_1\na u_2) + O(u^3)
 \pr= 2[U(z+\bar{z})/2]^2 + iH^{-1}\na[U(z+\bar{z})\cdot\na(z-\bar{z})] + O(u^3),}
so the correction term $z'$ coming from $z\bar{z}$ should be given by 
\EQ{
 z' := i\int_\I^t e^{iH(s-t)}\left\{|Uz^0|^2 + 2i\Im H^{-1}\na(U\bar{z^0}\cdot\na z^0)\right\}ds,}
where $z^0:=e^{-itH}\fy$ is the given free solution. 
The second term was missing in \cite{vac2}. 
Nevertheless the proof was correct because it was carried out assuming that all the   bilinear terms with $z^0\bar{z^0}$ had been subtracted, and those correction terms did not really affect the estimates. 
 Both terms can give non-$L^2$ contribution around $|\x|\sim 0$.

\end{appendix}

\noindent{Stephen Gustafson},  gustaf@math.ubc.ca \\
Department of Mathematics, University of British Columbia, 
Vancouver, BC V6T 1Z2, Canada

\bigskip

\noindent{Kenji Nakanishi},
n-kenji@math.kyoto-u.ac.jp \\
Department of Mathematics, Kyoto University,
Kyoto 606-8502, Japan

\bigskip

\noindent{Tai-Peng Tsai},  ttsai@math.ubc.ca \\
Department of Mathematics, University of British Columbia, 
Vancouver, BC V6T 1Z2, Canada

\end{document}